\newcommand{\commentout}[1]{}
\newcommand{\R}{\mathbb{R}}
\newcommand {\Chi} {{\bf \raise 2pt \hbox{$\chi$}} }
\newcommand {\proof} {\noindent {\bf Proof}. }
\newcommand{\beq}{\begin{equation}}
\newcommand{\eeq}{\end{equation}}
\newcommand{\bea} {\begin{array}{rl}}
\newcommand{\eea} {\end{array}}
\newcommand{\bepa}{\left\{ \begin{array}{l}}
\newcommand{\eepa} {\end{array}\right.}
\newtheorem{theorem}{Theorem}[section]
\newtheorem{definition}[theorem]{Definition}
\newtheorem{remark}[theorem]{Remark}
\numberwithin{equation}{section}
\newcommand{\qed}{{ \hfill
                      {\unskip\kern 6pt\penalty 500 \raise -2pt\hbox{\vrule\vbox to 6pt{\hrule width 6pt
                      \vfill\hrule}\vrule} \par}   }}
\title{\Large \bf 
Towards a realistic NNLIF model:
Analysis and numerical solver  for excitatory-inhibitory networks
 with delay and refractory periods}
\author{Mar\' {\i}a J. C\'aceres
\and Ricarda Schneider
}
\date{}
\begin{document}
\maketitle
\pagestyle{plain}
\pagenumbering{arabic}


\begin{abstract}
  The Network of Noisy Leaky Integrate and Fire (NNLIF) model
  describes the behavior of a neural network at mesoscopic level. It
  is one of the simplest self-contained mean-field models considered
  for that purpose. Even so, to study the mathematical properties of
  the model some simplifications were necessary \cite{CCP,CP,CS17},
  which disregard crucial phenomena. In this work we deal with the
  general NNLIF model without simplifications. It involves a network
  with two populations (excitatory and inhibitory), with transmission
  delays between the neurons and where the neurons remain in a
  refractory state for a certain time.
  We have studied the number of steady states in terms of the model
  parameters, the long time behaviour via the entropy method and
  Poincar\'e's inequality,
  blow-up phenomena, and the importance of 
  transmission delays between excitatory neurons to prevent blow-up
  and to give rise to synchronous solutions.  Besides analytical
  results, we have presented a numerical resolutor for this model,
  based on high order flux-splitting WENO schemes and an explicit
  third order TVD Runge-Kutta method, in order to describe the wide
  range of phenomena exhibited by the network: blow-up,
  asynchronous/synchronous solutions and instability/stability of the
  steady states; the solver also allows us to observe the time
  evolution of the firing rates, refractory states and the probability
  distributions of the excitatory and inhibitory populations.
\end{abstract}

\hspace{0.5cm}\rule{65mm}{0.1mm}

{\footnotesize
\hspace{0.5cm} \noindent \emph {2010 Mathematics Subject Classification.}  
35K60, 35Q92, 82C31, 82C32, 92B20

\hspace{0.5cm}\noindent \emph{Key words and phrases.} 
 Neural networks; Leaky integrate and fire models; noise; blow-up;
 steady states;

\hspace{0.5cm}\noindent entropy; long time behaviour; 
refractory states; transmission delay.




$\ $
}

\vspace{4cm}

\bigskip{\scriptsize 
  \noindent
  \textsc{Mar\'\i a J.~C\'aceres, Departamento de Matemática Aplicada,
    Campus de Fuentenueva,
    Universidad de Granada, 18071 Granada, Spain.
Phone: +34 958246301.}
  \textit{E-mail address}: \texttt{\href{mailto:caceresg@ugr.es}{caceresg@ugr.es}}

  \medskip
  \noindent
  \textsc{Ricarda Schneider, Departamento de Matemática Aplicada,
    Campus de Fuentenueva,
    Universidad de Granada, 18071 Granada, Spain.
Phone: +34 958240509.}
  \textit{E-mail address}:
  \texttt{\href{mailto:ricardaschneider@ugr.es}{ricardaschneider@ugr.es}}

\noindent
\thanks{Corresponding author: caceresg@ugr.es}
}

\newpage




\section{Introduction}

{ A wide variety of models have been usually considered in
  neuroscience, but their mathematical pro\-per\-ties remain poorly
  understood.  Mathematical studies on these models have advanced
  rapidly in the recent years, shedding light in this direction.  } In
this line, we analyze in this paper the Network of Noisy Leaky
Integrate and Fire (NNLIF) model, which describes the behavior of a
neural network at mesoscopic level and is one of the simplest
self-contained mean-field models used for that purpose.
We refer to \cite{BrHa,RBW,Touboul_AQIF, BrGe,RB,brunel, Touboul_2008,
  Henry:13, GK, T, G}, and references therein, for a background on
Integrate and Fire neuron models.

 This mesoscopic model is based on a nonlinear system of two Partial
  Differential Equations (PDEs) of Fokker-Planck type and two Ordinary
  Differential Equations (ODEs), which are all nonlinearly
  coupled. Moreover, some terms include time delays.  The system
  describes the behaviour of a network with excitatory and inhibitory
  neurons, which are considered as different populations.  Thus, the
  unknowns of the system are the probability densities
  $\rho_\alpha(t,v)$ of finding a neuron of the excitatory population
  ($\alpha=E$) and the inhibitory one ($\alpha=I$), whose membrane
  potential is $v$ at time $t$; together with the refractory states
  $R_\alpha(t)$, one for each population, which represent the 
  proportion of neurons that does not respond to stimuli.

 As a starting point, crucial phenomena have been disregarded in
  order to deal with this model. For example, transmission delay of
  the neural spike, the existence of refractory states, or the fact
  that there are two populations, have been neglected in order to
  simplify it \cite{CCP,CP,CS17}.  The simplest NNLIF model, widely
  studied in \cite{CCP,carrillo2014qualitative,CGGS}, corresponds to
  the case in which the neural network is assumed to be composed just
  by one population, which can be excitatory or inhibitory (in
  average), and where the neurons always respond to stimuli.  In
  mathematical terms this is translated into a unique PDE, with a
  \emph{connectivity parameter} $b$ whose sign determines whether the
  population is excitatory (positive $b$) or inhibitory (negative
  $b$). Many works have been developed in order to make the model
  more realistic: 
in \cite{CP}, {the authors} analyzed a model for one population
including the refractory state; in \cite{CS17}, a model for two
populations was considered; and in \cite{brunel}, a quite complete
model was studied that includes either one or two populations,
refractory states and transmission delays.

\

In the current work we aim to study a more realistic NNLIF model
consisting of two populations with refractory states and transmission
delays, completing the results of \cite{brunel}. We demonstrate that neural networks with part
of their neurons in a refractory state always have steady
states---which has been proved for the simpler case of only one
population \cite{CP}. This shows that in the complete model with
refractory states there is always at least one steady state, while in
the absence of refractory states \cite{CS17} there are some values of
the parameters for which the model has no steady states.  We are also
able to give conditions for the values of the model parameters which
ensure the uniqueness of the steady state.  This result is completed
with a proof of exponential convergence of the solution to the steady
state for networks with small connectivity parameters and without
transmission delay. The entropy method
\cite{carrillo2014qualitative,CS17} will be used to achieve this goal,
with the additional difficulty that we deal with a  complex system
 involving four equations, for which the entropy functional
is composed of excitatory and inhibitory densities and their
corresponding refractory probabilities.  Moreover, we extend to this case
the analysis of blow-up phenomena started in \cite{CP, CS17}.  We will
observe that the network can blow-up in finite time if the
transmission delay between excitatory neurons vanishes, even if there
are transmission delays between inhibitory neurons or between
inhibitory and excitatory neurons.  {Consequently, we show that the
  only way to avoid the blow-up is to consider a nonzero transmission
  delay between excitatory neurons.}  At the microscopic level, it is
known that global-in-time solutions exist if there is transmission
delay in the case of only one average-excitatory population (see
\cite{delarue2015global} and \cite{delarue2015particle}).

On the other hand, in order to better understand some of the
analytical open problems related to this model and show visually the
behaviour of the network, we develop a numerical solver for the full
model.
{Our solver is based on high order flux-splitting WENO schemes, TVD
  Runge-Kutta methods, and an efficient numerical strategy to deal
  with the saving and recovering of data needed to take the delays
  into account.  This new numerical solver improves our previous ones
  \cite{CCP,CP,CS17} not only because it describes the complete NNLIF
  model, but also due to it being optimized. It allows us to describe
  the wide range of phenomena displayed by the network: blow-up,
  asynchronous/synchronous solutions, instability/stability of the
  steady states, as well as the time evolution of the firing rates,
  the proportion of refractory states, and the probability
  distributions of the excitatory and inhibitory populations.
  Besides, we explore numerically the importance of the transmission
  delay between excitatory neurons to avoid the blow-up phenomenon;
  situations which present blow-up without delay are prevented it if a
  nonzero transmission delay is considered. Instead of blowing-up,
  solutions approach a stationary solution or \emph{synchronous state}.}

Our numerical scheme reproduces situations studied in \cite{brunel}
and completes them with the time evolution of the macroscopic (firing
rates and refractory states) and the mesoscopic quantities
(probability distributions). In this sense, our paper complements the
work in \cite{brunel} with the analysis of the number of steady
states, their stability for small connectivity parameters, the study
of the blow-up phenomenon and a numerical solver, which describes the
evolution in time of the system.

{To our knowledge, the numerical solver presented in this paper is the
  first deterministic solver to describe the behavior of the full
  NNLIF system including all the characteristic phenomena of real
  networks. Including all relevant phenomena is essential to explore
  some open problems, as for instance the stability in the case of
  large connectivity parameters, the importance of the transmission
  delay to avoid the blow-up of the solutions and to produce periodic
  solutions or the study of conditions for which synchronous solutions
  appear.}

In the rest of this introduction we describe the model and the concept
of solution considered. In Section \ref{sec: steady} we analyze the
number of steady states, prove exponential convergence to the unique
stationary solution when the connectivity parameters are small enough,
and present a criterion to obtain solutions that blow-up in finite
time.  All of these results are illustrated in Section \ref{sec:
  numerics}, where we present our numerical scheme and explore the
complex dynamics of the NNLIF model.



\subsection{The model}

Let us consider a neural network composed of an excitatory population
 and an inhibitory population. 
We denote by $\rho_\alpha(v,t)$  the probability density of finding 
a neuron in the population $\alpha$,
 with a voltage 
$v \in (-\infty, V_F]$  at a time $t\ge 0$,
 where $\alpha=E$, if the population
is excitatory, and $\alpha=I$, if it is inhibitory. 
We also consider the  NNLIF model \cite{brunel, CS17}
to describe the network, taking into account the 
transmission delay and the refractory state.
We obtain a complicated system of two PDEs 
for the evolution of these proba\-bi\-lity densities $\rho_\alpha(v,t)$, 
 coupled with another two ODEs
for the refractory states,  $R_\alpha(t)$,  for $\alpha=E,I$: 
\begin{equation}
\left \{ 
\begin{split}
\frac{\partial\rho_\alpha(v,t)}{\partial t}+ 
\frac{\partial}{\partial v} [h^\alpha(v,N_\alpha(t-D_E^\alpha),N_I(t-D_I^\alpha))
\rho_\alpha(v,t)]-  a_\alpha(N_E(t-D_E^\alpha), N_I(t-  &  D_I^\alpha))  
 \frac{\partial^2\rho_\alpha(v,t)}{\partial v^2}
=  
\\
& M_\alpha(t)\delta(v-V_R),
\\
\frac{dR_\alpha(t)}{dt} = N_\alpha(t)-M_\alpha(t), \qquad \qquad  \qquad \qquad \qquad \qquad  \qquad \qquad    \qquad \qquad \qquad  & 
\\
\\
N_\alpha (t)  =  -a_\alpha (N_E(t-D_E^\alpha), N_I(t-D_I^\alpha))
\frac{\partial \rho_\alpha }{\partial v}(V_F,t) \ge 0, \qquad    \qquad \qquad \qquad \qquad     &  
\\
\\
  \rho_\alpha(-\infty,t)=0, \quad \rho_\alpha(V_F,t)=0, \quad \rho_\alpha(v,0)=\rho_\alpha^0(v) \ge 0, \quad R_\alpha(0)=R_\alpha^0.  \quad   \qquad & 
\\
\label{modelo}
\end{split} 
\right.
\end{equation}
For each population $\alpha$, $R_\alpha(t)$  denotes the probability 
 to find a neuron  in the refractory state and $D_i^\alpha$, for
$i=E,I$, is the transmission delay of a spike arriving at a neuron
of population $\alpha$, coming from a neuron of population $i$.
The drift and 
diffusion coefficients are defined by
\begin{eqnarray}
h^\alpha (v,N_E(t),N_I(t))  & = &  
-v+b_E^\alpha N_E(t)-b_I^\alpha N_I(t)+(b_E^\alpha-b_E^E) \nu_{E ,ext}
, 
\label{drift}
\\ 
a_\alpha(N_E(t),N_I(t)) & = & {d}_\alpha + 
{d}_E^\alpha N_E(t)  
+
{d}_I^\alpha  N_I(t),  \quad \alpha=E,I,
 \label{diffusion}
\end{eqnarray}
 where, for $i,\alpha=E,I$, $b_i^\alpha>0$, ${d}_\alpha>0$ and $d_i^\alpha \ge 0$,
and  $b_i^\alpha$ are the connectivity parameters for a spike emitted 
by a neuron of population $i$ and arriving at a neuron of population 
$\alpha$, and $\nu_{E ,ext}\ge 0$ describes the external synapses.
Both populations (excitatory and inhibitory)
are  coupled by means of the drift
and diffusion coefficients.
Moreover, the system \eqref{modelo} is nonlinear because 
 the firing rates, $N_\alpha$,  are defined in terms of the boundary 
conditions  for $\rho_\alpha$.


Denoting the refractory period $\tau_\alpha$, 
different choices of  $M_\alpha(t)$ can be considered:
$M_\alpha(t)=N_\alpha(t-\tau_\alpha)$ (studied in \cite{brunel}),
and
$M_\alpha(t)=\frac{R_\alpha(t)}{\tau_\alpha}$ (analyzed in \cite{CP}). 
Depending on the refractory state used, 
slightly different behaviors of the solutions will appear.

On the other hand, since the number of neurons is assumed to be 
preserved, we have the conservation law: 
\begin{eqnarray}\label{masa}
\int_{-\infty}^{V_F}\rho_{\alpha}(v,t) \ dv + R_\alpha(t) = 
\int_{-\infty}^{V_F}\rho_{\alpha}^0(v) \ dv+R_\alpha^0 = 1 \quad \forall \ t\ge 0, 
\quad \alpha=E,\, I.
\end{eqnarray}
To finish the description of the model, we remark that
system \eqref{modelo} also includes the case
of only one population (in average excitatory or inhibitory),
with refractory state and transmission delay.
Specifically, we can remove $\alpha$ in \eqref{modelo} 
considering only one PDE for the probability density, $\rho(v,t)$, 
which is coupled to an ODE for the probability that a neuron is in a  
refractory state, $R(t)$:
\begin{equation}
\left \{ 
\begin{split}
& \frac{\partial\rho}{\partial t}(v,t)+
\frac{\partial}{\partial v} [h(v,N(t-D))
\rho(v,t)]-  a(N(t-D))  
\frac{\partial^2\rho}{\partial v^2}(v,t) 
= M(t)\delta(v-V_R),
\\
& \frac{dR(t)}{dt} = N(t)-M(t), 
\\
& N(t)  =  -a(N(t-D))
\frac{\partial \rho}{\partial v}(V_F,t) \ge 0, 
\\
&  \rho(-\infty,t)=0, \quad \rho(V_F,t)=0, \quad \rho(v,0)=
\rho^0(v) \ge 0, \quad R(0)=R^0, 
\\
\label{nota}
\end{split} \right.
\end{equation}
with drift and diffusion 
terms 
\begin{eqnarray}
h (v,N(t))  & = &  
-v+b N(t)+\nu_{ext}, 
\label{drift_E}
\\ 
a(N(t)) & = & d_0 + 
{d}_1N_E(t), 
 \label{diffusion_E}
\end{eqnarray}
where the connectivity parameter $b$ is positive  
 for an average-excitatory population and negative  
for an average-inhibitory population, and where  $d_0>0$,  $d_1 \ge0$,
and $\nu_{ext}$ describes the external synapses (note that this parameter
and $\nu_{E ,ext}$ have different units, since $\nu_{ext}$
includes other model constants).


As in \cite{CCP,CP,CS17}, the notion of solution that we consider
 is the following:
\begin{definition}
Let $\rho_\alpha \in   L^\infty(\mathbb{R}^+;L^1_+((-\infty, V_F)))$, 
$N_\alpha \in \ L^1_{loc,+}(\mathbb{R}^+)$ and $R_\alpha \in L^\infty_+(\mathbb{R}^+)$ for  $\alpha=E,I$. Then $(\rho_E, \rho_I, R_E, R_I, N_E, N_I)$ is a weak solution of (\ref{modelo})-(\ref{diffusion}) if for any test function $\phi(v,t) \in  C^\infty((-\infty, V_F]  \times [0,T])$ 
and such that 
$\frac{\partial^2 \phi}{\partial v^2}, 
\ v\frac{\partial \phi}{\partial v} \in L^\infty((-\infty, V_F) \times (0, T))$ 
the following relation 
\begin{equation} \label{debil}
\begin{split}
&\int_{0}^{T} \!\! 
\int_{-\infty}^{V_F} \!\!
\rho_\alpha (v,t) 
\left[-\frac{\partial \phi}{\partial t} -
\frac{\partial \phi}{\partial v} h^\alpha(v,N_E(t-D_E^\alpha),N_I(t-D_I^\alpha))-
a_\alpha(N_E(t-D_E^\alpha),N_I(t-D_I^\alpha))\frac{\partial^2 \phi}{\partial v^2}\right] \!\! dv dt 
 \\ 
&= 
\int_{0}^{T}\!\! [M_\alpha(t)\phi(V_R,t)-N_\alpha(t)\phi(V_F,t)]dt+ 
\int_{-\infty}^{V_F} 
\!\!
\rho_\alpha^0(v)\phi(v,0) dv - \int_{-\infty}^{V_F}
\!\!
\rho_\alpha(v,T)\phi(v,T) \,dv 
\end{split}
\end{equation}
is satisfied $\forall \ \alpha=E,I$, 
and $R_\alpha$, for $\alpha=E,I,$ are solutions of the ODEs
\begin{equation*}
\frac{dR_\alpha(t)}{dt}=N_\alpha(t)-M_\alpha(t).
\end{equation*}
\end{definition}
We recall some notations involved in Definition \ref{debil}.
For $1\leq p < \infty$,  $L^p(\Omega)$ is the space of functions such 
that $f^p$ is integrable in $\Omega$, 
$L^\infty (\Omega)$ is the space of essentially bounded functions in $\Omega$, 
$L^\infty_+ (\Omega)$ represents the space of non-negative essentially bounded functions in $\Omega$, 
$C^\infty(\Omega)$ is the set of infinitely differentiable functions in 
$\Omega$ and $L^1_{loc,+}(\Omega)$ denotes the set of non-negative 
functions that are locally integrable  in $\Omega$.

\section{Steady states and long time behavior}

\label{sec: steady}


The study of the number of steady states
for excitatory and inhibitory NNLIF neural networks,
with refractory periods and transmission delays of the spikes
\eqref{modelo}  (considering $R_\alpha$ either as defined in \cite{CP} or
  in \cite{brunel}), can be done combining the ideas of \cite{CCP, CP} and 
\cite{CS17},  with the 
additional difficulty that the system to be dealt
with is now more complicated.
The steady states $(\rho_E, \rho_I, N_E,N_I, R_E,R_I)$ of 
\eqref{modelo} satisfy
\begin{equation*}
\frac{\partial}{\partial v} [h^\alpha(v)\rho_\alpha(v)-a_\alpha(N_E,N_I)
\frac{\partial \rho_\alpha}{\partial v}(v)+
\frac{R_\alpha}{\tau_\alpha} H(v-V_R)]=0, \quad 
R_\alpha=\tau_\alpha N_\alpha, \qquad \alpha=E,I,
\end{equation*}
in the sense of distributions, with $H$ denoting the Heaviside function
and $h^\alpha(v, N_E,N_I)=V_0^\alpha(N_E,N_I)-v$,
where
$V_0^\alpha (N_E,N_I)=b_E^\alpha N_E - b_I^\alpha N_I +
(b_E^\alpha-b_E^E)\nu_{E,ext}$. 
We remark that this equation is the same as the equation
for  stationary solutions in a network without transmission delays.
Using the definition of $N_\alpha$ and the Dirichlet bounday
 conditions of \eqref{modelo} we obtain an initial value problem
 for every $\alpha=E,I$, 
whose  solutions are
\begin{equation}\label{p_steady}
\rho_\alpha(v)=\frac{N_\alpha}{a_\alpha(N_E,N_I)} 
e^{-\frac{(v-V_0^\alpha(N_E,N_I))^2}{2a_\alpha(N_E,N_I)}}
\int_{\max(v,V_R)}^{V_F} 
e^{\frac{(w-V_0^\alpha (N_E,N_I))^2}{2a_\alpha(N_E,N_I)}}\ dw \qquad \alpha=E,I.
\end{equation}
Moreover, the conservation of mass \eqref{masa}, which takes into account
 the refractory states, 
 yields a system of implicit
 equations for $N_\alpha$
\begin{equation}
1-\tau_\alpha N_\alpha=
\frac{N_\alpha}{a_\alpha(N_E,N_I)}\int_{-\infty}^{V_F} 
e^{-\frac{(v-V_0^\alpha(N_E,N_I))^2}{2a_\alpha(N_E,N_I)}} 
\int_{\max(v,V_R)}^{V_F}e^{\frac{(w-V_0^\alpha (N_E,N_I))^2}{2a_\alpha(N_E,N_I)}}\ dw 
\ dv.
\label{*}
\end{equation}
If this system could be solved, the profile \eqref{p_steady}
would provide an exact expression for $\rho_\alpha$.
In order to handle 
the previous system  more easily, 
 we use  two
changes of variables as in \cite{CS17}.
First:
\begin{eqnarray}
z&\! \! \! \! \! =\! \! \! \! \! &
\frac{v-V_0^E(N_E,N_I)}{\sqrt{a_E (N_E,N_I)}},  
\ u=\frac{w-V_0^E(N_E,N_I)}{\sqrt{a_E(N_E,N_I)}},  
\ w_F:=\frac{V_F-V_0^E(N_E,N_I)}{\sqrt{a_E(N_E,N_I)}},   \
w_R:=\frac{V_R-V_0^E(N_E,N_I)}{\sqrt{a_E(N_E,N_I)}},
\nonumber \\
\tilde{z}&\! \! \! \! \! =\! \! \! \! \! &
\frac{v-V_0^I(N_E,N_I)}{\sqrt{a_I(N_E,N_I)}},  \
 \tilde{u}=\frac{w-V_0^I(N_E,N_I)}{\sqrt{a_I(N_E,N_I)}},  \
 \tilde{w}_F:=\frac{V_F-V_0^I(N_E,N_I)}{\sqrt{a_I(N_E,N_I)}},   \
 \tilde{w}_R:=\frac{V_R-V_0^I(N_E,N_I)}{\sqrt{a_I(N_E,N_I)}},
\nonumber
\end{eqnarray}
and \eqref{*}  is then written as
\begin{eqnarray}
\frac{1}{N_E}- \tau_E & = & I_1(N_E,N_I),
\ \mbox{where} \ I_1(N_E,N_I)  =  
\int_{-\infty}^{w_F}e^{-\frac{z^2}{2}}
\int_{\max(z,w_R)}^{w_F}e^{\frac{u^2}{2}}du \ dz,
 \nonumber \\
\frac{1}{N_I} - \tau_I& = & I_2(N_E,N_I), 
\ \mbox{where} \
I_2(N_E,N_I) = \int_{-\infty}^{\tilde{w}_F}
e^{-\frac{z^2}{2}}\int_{\max(z,\tilde{w}_R)}^{\tilde{w}_F}
e^{\frac{u^2}{2}}du \ dz,
\label{ecN2}
\end{eqnarray}
with the additional restrictions
\begin{equation}
N_\alpha<\frac{1}{\tau_\alpha} \quad \alpha=E,I,
\label{ec: C-N}
\end{equation}
since
$R_\alpha=\tau_\alpha N_\alpha$ and $R_\alpha< 1$
(we also observe these restrictions by the positivity of
$I_\alpha$, see \eqref{ecN2}).
Next, the change of variables 
$s=\frac{z-u}{2}$ and $\tilde{s}=\frac{z+u}{2}$
allows to formulate the functions
$I_1$ and $I_2$  as
\begin{eqnarray}
I_1(N_E,N_I)=
\int_0^\infty 
\frac{e^{-\frac{s^2}{2}}}{s}(e^{s \, w_F}-e^{s \, w_R}) \ ds,
 \label{I1}
\\
I_2(N_E,N_I)=\int_0^\infty \frac{e^{-\frac{s^2}{2}}}{s}
(e^{s \, \tilde{w}_F}-e^{s \, \tilde{w}_R}) \ ds. \label{I2}
\end{eqnarray}
If $b_I^E=b_E^I=0$ the equations are uncoupled
 and the number of steady states can 
be studied in terms of the values of $b_E^E$, due to fact
 that for the inhibitory equation there is always a unique steady state
 \cite{CP}. The following theorem analyses the coupled case.
\begin{theorem}
\label{th: steady states}
Assume that $b_I^E>0, \, b_E^I>0, \, \tau_E>0, \, \tau_I>0$, 
 $a_\alpha(N_E, N_I)=a_\alpha$ constant,
 and $h^\alpha(v,N_E,N_I)=V_0^\alpha (N_E,N_I)-v$ with
$V_0^\alpha (N_E,N_I)=b_E^\alpha N_E - b_I^\alpha N_I + 
(b_E^\alpha-b_E^E)v_{E,ext}$ for all $\alpha = E,I$.
Then there is always an odd number of steady states for \eqref{modelo}.

Moreover, if $b_E^E$ is small enough or $\tau_E$ is large enough 
(in comparison with the rest of parameters), then
there is a unique steady state for \eqref{modelo}.
\end{theorem}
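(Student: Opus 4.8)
The plan is to collapse the $2\times 2$ system \eqref{ecN2} onto a single scalar equation and then count its zeros, working throughout on the admissible rectangle $(0,1/\tau_E)\times(0,1/\tau_I)$ dictated by \eqref{ec: C-N}. The first step records the monotonicity of $I_1,I_2$ in each argument. Since $w_F,w_R$ depend on $(N_E,N_I)$ only through $V_0^E$, and since the weight $1/s$ in \eqref{I1} exactly cancels the factor $s$ produced when one differentiates $e^{sw_F}$, differentiation under the integral sign gives $\partial_{N_E}I_1=-\tfrac{b_E^E}{\sqrt{a_E}}J_E$ and $\partial_{N_I}I_1=\tfrac{b_I^E}{\sqrt{a_E}}J_E$, with $J_E:=\int_0^\infty e^{-s^2/2}(e^{sw_F}-e^{sw_R})\,ds>0$ (positive because $V_F>V_R$). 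The same computation applied to \eqref{I2} yields $\partial_{N_E}I_2=-\tfrac{b_E^I}{\sqrt{a_I}}J_I<0$ and $\partial_{N_I}I_2=\tfrac{b_I^I}{\sqrt{a_I}}J_I>0$ with $J_I>0$. All signs are pinned down by the hypotheses $b_i^\alpha>0$.

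The second step eliminates $N_I$ using the inhibitory equation in \eqref{ecN2}. The map $N_I\mapsto \tfrac1{N_I}-\tau_I-I_2(N_E,N_I)$ has derivative $-\tfrac1{N_I^2}-\partial_{N_I}I_2<0$ for \emph{all} parameters, and runs from $+\infty$ as $N_I\to0^+$ to a negative value as $N_I\to(1/\tau_I)^-$; hence for each fixed $N_E\in(0,1/\tau_E)$ there is a unique root $N_I=\psi(N_E)\in(0,1/\tau_I)$. By the implicit function theorem $\psi\in C^1$, and since $\partial_{N_E}I_2<0$ one obtains $\psi'(N_E)>0$, i.e.\ $\psi$ is increasing. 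This is precisely the two-population counterpart of the always-unique inhibitory steady state of \cite{CP}.

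Substituting then produces the scalar equation $g(N_E):=\tfrac1{N_E}-\tau_E-I_1\bigl(N_E,\psi(N_E)\bigr)=0$ on $(0,1/\tau_E)$, whose roots are in bijection with the steady states of \eqref{modelo}. Because $I_1$ stays finite on the admissible rectangle, $g(0^+)=+\infty>0$, whereas $g\bigl((1/\tau_E)^-\bigr)=-I_1<0$ since $\tfrac1{N_E}-\tau_E\to0$ while $I_1>0$; a continuous function positive at the left endpoint and negative at the right has an odd number of (transversal) zeros, which is the first assertion. For uniqueness I differentiate, $g'(N_E)=-\tfrac1{N_E^2}+\tfrac{b_E^E}{\sqrt{a_E}}J_E-\tfrac{b_I^E}{\sqrt{a_E}}J_E\,\psi'(N_E)$. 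The last term is $\le 0$ by the previous step, so the only term of the wrong sign is $\tfrac{b_E^E}{\sqrt{a_E}}J_E$, which is proportional to $b_E^E$ and bounded above uniformly on the rectangle (there $N_I<1/\tau_I$ keeps $V_0^E$, hence $w_F$ and $J_E$, bounded). Thus $g'<0$, and $g$ has exactly one zero, as soon as $b_E^E$ is small enough to make that term smaller than $1/N_E^2$, or as soon as $\tau_E$ is large enough that $1/N_E^2>\tau_E^2$ dominates it on the (then short) interval $N_E<1/\tau_E$.

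The delicate point is the parity statement: the bijection only yields an \emph{odd} number of steady states once the zeros of $g$ are known to be simple, since a tangential zero could spoil the count; I would handle this through the net-sign-change argument above together with the robustness of the endpoint signs of $g$ (adding a genericity remark on the parameters if necessary). The remaining technical obligations—justifying differentiation under the integral sign and establishing the uniform bound on $J_E$—are routine once $w_F$ is controlled on the admissible rectangle, and the monotonicity $\psi'>0$ from the second step is exactly what renders the $N_I$-contribution to $g'$ harmless in the uniqueness regime.
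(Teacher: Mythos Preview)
Your proof is correct and follows essentially the same route as the paper: eliminate $N_I$ via the inhibitory equation to obtain a unique increasing $\psi(N_E)$, reduce to a scalar equation in $N_E$ with opposite endpoint signs, and then establish monotonicity for uniqueness. Your function $g(N_E)=1/N_E-\tau_E-I_1$ is just a reformulation of the paper's $\mathcal{F}(N_E)=N_E(\tau_E+I_1)$ (they share the same zeros), and your treatment of the $\tau_E$-large case via $1/N_E^2>\tau_E^2$ is in fact slightly more direct than the paper's bounding of $N_I'(N_E)$; the parity caveat you raise is present in the paper's argument as well.
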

\proof
The proof 
is based on determining
the number of solutions of the system
\begin{align}
1 & = N_E\left(\tau_E+I_1(N_E,N_I)\right), \quad 0<N_E<\frac{1}{\tau_E},
\label{ec:system-IE}
\\
1 & = N_I\left(\tau_I+I_2(N_E,N_I)\right), \quad 0<N_I<\frac{1}{\tau_I}.
\label{ec:system-II}
\end{align}
With this aim, we adapt some ideas of \cite{CP} and \cite{CS17}
to the system 
\eqref{ec:system-IE}-\eqref{ec:system-II}. 
We refer to \cite{CS17} for details about the 
properties
of the functions $I_1$ and $I_2$ (see \eqref{I1} and \eqref{I2})
and their proofs. 

First, we observe that for every $N_E>0$ fixed, there is a unique solution
 $N_I(N_E)$ that solves \eqref{ec:system-II},
because for  $N_E>0$ fixed, the function 
$f(N_I)= N_I\left(\tau_I+I_2(N_E,N_I)\right)$ satisfies:
$f(0)=0$, $f(\frac{1}{\tau_I})=
1+\frac{I_2(N_2,\frac{1}{\tau_I})}{\tau_I}>1$
and is increasing, since $I_2(N_E,N_I)$ is an increasing, strictly
 convex function on $N_I$.

Then, taking into account that the function 
$\mathcal{F}(N_E):= N_E[I_1(N_E,N_I(N_E)+\tau_E]$ satisfies that
$\mathcal{F}(0)=0$ and 
$\mathcal{F}(\frac{1}{\tau_E})= 
1+\frac{I_1\left(\frac{1}{\tau_E},N_I(\frac{1}{\tau_E})\right)}{\tau_E}>1$,
it can be concluded that there is always an odd number of steady states.

Finally, to obtain values of the parameters such that there is
a unique steady state,  we analyze
the derivative of $\mathcal{F}$: 
\begin{equation}
\mathcal{F}'(N_E)= I_1(N_E,N_I(N_E))+\tau_E +
N_E\left[-\frac{b_E^E}{\sqrt{a_E}}+\frac{b_I^E}{\sqrt{a_E}}N_I'(N_E)\right]
\int_0^\infty e^{\frac{-s^2}{2}}(e^{sw_F}-e^{sw_R}) \ ds.
\nonumber
\end{equation}
It is non-negative for $0<N_E<\frac{1}{\tau_E}$,
for certain parameter values,  and therefore there is a unique 
steady state in these cases.
For $b_E^E$ small,  $\mathcal{F}'(N_E)$ is positive
since all the terms are positive, because $N'(N_E)$ is positive
(see the proof of Theorem 4.1 in \cite{CS17}).
For $\tau_E$ large, the proof of the positivity of $\mathcal{F}'(N_E)$
is 
more complicated. It is necessary to use 
\begin{equation}
N_I'(N_E)=
\frac{b_E^I N_I^2(N_E) I(N_E)}
{\sqrt{a_I}+ b_I^I N_I^2(N_E)I(N_E)},
\label{N'bis}
\end{equation}
where
$$
I(N_E)=
\int_0^\infty e^{-s^2/2}
e^{\frac{-(b_E^IN_E-b_I^IN_I(N_E)+(b_E^I-b_E^E)\nu_{E,ext})s}{\sqrt{a_I}}}
\left( e^{sV_F/\sqrt{a_I}}-e^{sV_R/\sqrt{a_I}} \right) \ ds.
$$
The function $N_I(N_E)$ is increasing and $I(N_E)$ is decreasing, since $0<N_I'(N_E)<\frac{b_E^I}{b_I^I}$
(see the proof of  Theorem 4.1 in \cite{CS17}).
Therefore, for $0<N_E<\frac{1}{\tau_E}$,  
\begin{equation}
A<-\frac{b_E^E}{\sqrt{a_E}}+\frac{b_I^E}{\sqrt{a_E}}N_I'(N_E)<B,
\nonumber
\end{equation}
where $A:=-\frac{b_E^E}{\sqrt{a_E}}+\frac{b_I^E}{\sqrt{a_E}}
\frac{b_E^I N_I^2(0) I(\frac{1}{\tau_E})}
{\sqrt{a_I}+ b_I^I N_I^2(\frac{1}{\tau_E})I(0)}$ and 
$B:=-\frac{b_E^E}{\sqrt{a_E}}+\frac{b_I^E}{\sqrt{a_E}}
\frac{b_E^I N_I^2(\frac{1}{\tau_E}) I(0)}
{\sqrt{a_I}+ b_I^I N_I^2(0)I(\frac{1}{\tau_E})}$.
Thus, if $0\le A$ it is obvious that $\mathcal{F}(N_E)$ is increasing.
For the case $A<0$, some additional computations are needed.
First, we consider $I_m:=\min_{0\le N_E \le
\frac{1}{\tau_E}}I_1(N_E,N_I(N_E))$.
Next,  since $A<0$,
\begin{equation}
I_m+\tau_E+\frac{A}{\tau_E} \tilde{I}(\tau_E)
\le 
\mathcal{F}'(N_E),
\nonumber
\end{equation}
where $\tilde{I}(\tau_E):=\displaystyle\int_0^\infty
e^{-\frac{s^2}{2}} e^{\frac{s b_I^E N_I(\frac{1}{\tau_E})}{\sqrt{a_E}}}
\left( 
e^{\frac{s V_F}{\sqrt{a_E}}}-
e^{\frac{s V_R}{\sqrt{a_E}}}
\right) \, ds$. Finally, if
$0<I_m+\tau_E+\frac{A}{\tau_E} \tilde{I}(\tau_E)$, or equiva\-lently
$-A \tilde{I}(\tau_E)<\tau_E(I_m+\tau_E)$,
then $\mathcal{F}(N_E)$ is increasing.
We observe that it happens for $\tau_E$ large enough. 
\qed
\begin{remark}
Analyzing in more detail the expression of $A$
in the previous proof ($A=-\frac{b_E^E}{\sqrt{a_E}}+\frac{b_I^E}{\sqrt{a_E}}
\frac{b_E^I N_I^2(0) I(\frac{1}{\tau_E})}
{\sqrt{a_I}+ b_I^I N_I^2(\frac{1}{\tau_E})I(0)}$),
we observe that for $b_E^Ib_I^E$ large or
$b_I^I$ small enough, in comparison with
the rest of parameters, there is also a unique stationary solution,
since $A>0$.

In other words, what we obtain is the uniqueness of the
steady state in terms of the size of
the parameters. More precisely: If one of the two pure
connectivity parameters, $b_E^E$ or $b_I^I$, is small, or
one of the two cross connectivity parameters,  $b_E^I$ or $b_E^I$,
is large, or the excitatory refractory period, $\tau_E$, is
large, then there exists a unique steady state.
\label{remark}
\end{remark}
\subsection{Long time behavior}
As proved in \cite{carrillo2014qualitative, CS17},
where no refractory states were considered, the
solutions converge exponentialy fast to the unique steady
state when the connectivity parameters are small enough.
We extend these results to the case in which refractory states are
included. We prove the result for the case of only one population 
in the following theorem, and then show the general case of two
populations.
\begin{theorem}
Consider system \eqref{nota}
 and $M(t)=\frac{R(t)}{\tau}$. Assume that the connectivity parameter 
$b$ is small enough, $|b|<<1$,  the diffusion term is constant, 
$a(N)=a$ for some $a>0$, there is no
transmission delay, $D=0$, and that the initial datum is close enough to 
the unique steady state $\left(\rho_\infty, R_\infty, N_\infty\right)$,
\begin{equation} \label{dato_inicial}
\int_{-\infty}^{V_F}\rho_\infty (v)
\left(\frac{\rho^0(v)-\rho_\infty(v)}{\rho_\infty(v)}\right)^2  \  dv +
R_\infty\left(\frac{R(0)}{R_\infty}-1\right)^2\leq \frac{1}{2|b|}.
\end{equation}
Then, for fast decaying solutions to \eqref{nota}
 there is a constant $\mu>0$ such that for all $t\ge0$
\begin{equation*}
\int_{-\infty}^{V_F} \!\! \!\!
\rho_\infty (v)
\left( \frac{\rho(v)-\rho_\infty(v)}{\rho_\infty(v)}\right)^2
\!\!  dv + \frac{(R(t)-R_\infty)^2}{R_\infty} \leq e^{-\mu t} 
\left[ \int_{-\infty}^{V_F} \!\! \!\!
\rho_\infty \left( \frac{\rho^0(v)-\rho_\infty(v)}{\rho_\infty(v)}
\right)^2 \!\! dv+ \frac{(R^0-R_\infty)^2}{R_\infty}\right].
\end{equation*}
\label{th: long1}
\end{theorem}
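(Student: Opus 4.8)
The plan is to run the quadratic (general) relative entropy method of \cite{carrillo2014qualitative,CS17}, now carrying the refractory variable $R(t)$ inside the functional. First I would record the structure of the target steady state: since $D=0$ and $a(N)=a$ is constant, \eqref{p_steady} gives the Gaussian-weighted profile $\rho_\infty$ with centre $V_0^\infty=bN_\infty+\nu_{ext}$, while the stationary refractory relation forces $R_\infty=\tau N_\infty$ and hence $M_\infty=R_\infty/\tau=N_\infty$. Writing the flux $J[\rho]=(V_0(N)-v)\rho-a\,\partial_v\rho$, the steady state satisfies $\partial_v J_\infty=N_\infty\,\delta(v-V_R)$ with $J_\infty(V_F)=N_\infty$ and $J_\infty(-\infty)=0$, so that $J_\infty=N_\infty H(v-V_R)$. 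I set $u=\rho/\rho_\infty$ and let $\mathcal{E}(t)$ denote precisely the left-hand side of the claimed inequality, $\mathcal{E}(t)=\int_{-\infty}^{V_F}\rho_\infty(u-1)^2\,dv+(R-R_\infty)^2/R_\infty$. A point I would use repeatedly is that the conservation law \eqref{masa} linearizes to $\int_{-\infty}^{V_F}\rho_\infty(u-1)\,dv=-(R-R_\infty)$, tying the mean of the density perturbation to the refractory deviation.

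Next I differentiate $\mathcal{E}$ along the flow. Using $\rho_\infty\,\partial_t u=\partial_t\rho=-\partial_v J+M\,\delta(v-V_R)$ and integrating by parts, the density part of $\tfrac{d}{dt}\mathcal{E}$ splits into the dissipative Fokker--Planck term $-2a\int_{-\infty}^{V_F}\rho_\infty|\partial_v u|^2\,dv$, a boundary contribution at $V_F$ carrying the firing rate $N(t)$, and a jump contribution at $V_R$ carrying $M(t)$. The refractory part is explicit: since $\dot R=N-R/\tau$ and $M_\infty=N_\infty$, I get $\tfrac{2(R-R_\infty)}{R_\infty}\dot R=\tfrac{2(R-R_\infty)(N-N_\infty)}{R_\infty}-\tfrac{2(R-R_\infty)^2}{\tau R_\infty}$, whose last term is dissipative. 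The choice $M=R/\tau$ is what keeps this block linear, since $M-M_\infty=(R-R_\infty)/\tau$; together with the conservation identity of the first paragraph, the firing/reset flux terms and the indefinite cross term $\tfrac{2(R-R_\infty)(N-N_\infty)}{R_\infty}$ recombine into a single coercive block rather than an uncontrolled one.

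Then I would bound the good term from below by a spectral gap: the measure $\rho_\infty\,dv$ is Gaussian-like on $(-\infty,V_F]$, so Poincar\'e's inequality yields $2a\int_{-\infty}^{V_F}\rho_\infty|\partial_v u|^2\,dv\ge\lambda\int_{-\infty}^{V_F}\rho_\infty(u-1)^2\,dv$ for some $\lambda>0$ depending only on the frozen coefficients. The only genuinely nonlinear contribution is the drift perturbation $V_0(N)-V_0^\infty=b(N-N_\infty)$, which upon integration by parts against the steady flux produces a term proportional to $b\,(N-N_\infty)$ paired with a factor that is itself controlled by the Fokker--Planck dissipation. Using a trace bound of the form $|N-N_\infty|\le C\big(\int_{-\infty}^{V_F}\rho_\infty|\partial_v u|^2\,dv\big)^{1/2}$, Young's inequality lets me absorb the $b$-term into a fraction of the Fokker--Planck dissipation and the refractory cross term into the refractory dissipation $-\tfrac{2}{\tau R_\infty}(R-R_\infty)^2$, the residual dissipation still dominating $\lambda\,\mathcal{E}$ up to a constant. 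Choosing $|b|$ small enough then yields $\tfrac{d}{dt}\mathcal{E}(t)\le-\mu\,\mathcal{E}(t)$, and Gronwall's lemma gives the stated exponential decay.

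The main obstacle is exactly the control of the firing-rate deviation $N-N_\infty=-a\,\partial_v\rho(V_F)+a\,\partial_v\rho_\infty(V_F)$, a boundary trace of the gradient, by the bulk dissipation; the degeneracy $\rho_\infty(V_F)=0$ makes this trace estimate delicate and is where the flux formulation must be used carefully. This is also where the hypotheses ``fast decaying solutions'' and the quantitative closeness \eqref{dato_inicial} enter, through a continuation/bootstrap argument: as long as $\mathcal{E}(t)\le 1/(2|b|)$ the trace estimate together with the smallness of $b$ keeps the self-consistent nonlinear terms subordinate to the dissipation, so $\tfrac{d}{dt}\mathcal{E}\le-\mu\,\mathcal{E}$ holds; this in turn forces $\mathcal{E}$ to decrease, so $\mathcal{E}(t)\le\mathcal{E}(0)\le 1/(2|b|)$ is never violated and the differential inequality is valid for all $t\ge0$. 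Thus the smallness of the initial relative entropy is precisely what makes the good regime self-sustaining, exactly as in the no-refractory analyses of \cite{carrillo2014qualitative,CS17}; the only new ingredient is the refractory block and its clean pairing with the density entropy through $M=R/\tau$ and the conservation law \eqref{masa}.
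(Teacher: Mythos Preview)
Your overall plan is the right one and matches the paper's strategy: quadratic relative entropy including the refractory block, a Poincar\'e inequality, and a bootstrap on $\mathcal{E}(t)\le 1/(2|b|)$ closed by Gronwall. The differences with the paper are not cosmetic, though, and one of them is a genuine weak point in your sketch.

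First, the boundary/refractory block. You say the firing/reset flux terms and the cross term ``recombine into a single coercive block''; the paper actually computes this block explicitly and finds, with $p=\rho/\rho_\infty$, $r=R/R_\infty$, $\eta=N/N_\infty$,
\[
-N_\infty\bigl[(r-\eta)^2+(r-p(V_R))^2\bigr].
\]
This exact identity is the mechanism by which the paper obtains direct control of $(\eta-1)^2$: from $(r-\eta)^2\ge \epsilon(\eta-1)^2-2\epsilon(r-1)^2$ one peels off $-\epsilon N_\infty(\eta-1)^2$, and the unwanted $+2\epsilon N_\infty(r-1)^2$ is reabsorbed via Poincar\'e. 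In other words, the coercive block itself supplies the needed bound on the firing-rate deviation; no separate trace inequality is invoked.

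Second, and this is the weak point: your proposed trace bound $|N-N_\infty|\le C\bigl(\int\rho_\infty|\partial_v u|^2\bigr)^{1/2}$ is precisely what the degeneracy $\rho_\infty(V_F)=0$ makes delicate, as you yourself note. Since $\rho_\infty(v)\sim c\,(V_F-v)$ near $V_F$, a naive Cauchy--Schwarz gives a divergent weight $\int\rho_\infty^{-1}$ and the estimate does not close. The paper never needs such a trace inequality: the nonlinear $b$-term is bounded (following \cite{carrillo2014qualitative}) by $C(2b^2+|b|)(\eta-1)^2$ plus terms dominated by the Fisher information, and the $(\eta-1)^2$ piece is absorbed for small $|b|$ into the $-\epsilon N_\infty(\eta-1)^2$ produced above. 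So the control of $N-N_\infty$ comes from the entropy-production structure, not from a functional inequality at the boundary.

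Third, the Poincar\'e inequality used is not the plain weighted one you state. The paper uses the version from \cite{CP}, Proposition~5.3, which reads
\[
\int_{-\infty}^{V_F}\frac{(\rho-\rho_\infty)^2}{\rho_\infty}\,dv+\frac{(R-R_\infty)^2}{R_\infty}
\le \frac{1}{\gamma}\Bigl[\int_{-\infty}^{V_F}\rho_\infty|\partial_v p|^2\,dv+N_\infty\,(r-p(V_R))^2\Bigr],
\]
so the right-hand side carries both the Fisher information and the reset discrepancy $(r-p(V_R))^2$; the latter is exactly what the coercive block provides. Your version, with only $\int\rho_\infty|\partial_v u|^2$ on the right, cannot by itself control $\int\rho_\infty(u-1)^2$ because the $\rho_\infty$-mean of $u-1$ is $-(R-R_\infty)/(1-R_\infty)\neq 0$; you would then need to route the missing piece through the refractory dissipation, which is possible but is an extra step you did not spell out.

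In short: keep the entropy/bootstrap skeleton, but replace the trace-bound step by the explicit algebraic identity for the boundary/refractory block and use the Poincar\'e inequality with the $(r-p(V_R))^2$ term. That is exactly how the paper closes the argument.
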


\begin{proof}
The proof  
combines  a relative entropy 
argument with the  Poincar\'e's inequality 
that is presented in  
\cite{CP}[Proposition 5.3].  Additionally,  
to deal with the nonlinearity (the connectivity parameter does not vanish)
 we follow some ideas of
 \cite{carrillo2014qualitative}[Theorem 2.1].
Notice that along the proof we will use the simplified notation
\begin{equation*}
p(v,t)=\frac{\rho(v,t)}{\rho_\infty(v)}, \qquad r(t)=\frac{R(t)}{R_\infty}, 
\qquad \eta(t)=\frac{N(t)}{N_\infty}.
\end{equation*}
First, for any smooth convex function $G: \mathbb{R}^+ \rightarrow \mathbb{R}$,
we recall that a natural relative entropy for equation \eqref{nota}  is defined as
\begin{equation}\label{energia}
E(t):= \int_{-\infty}^{V_F}\rho_\infty G(p(v,t)) \ dv +R_\infty G(r(t)).
\end{equation}
The time derivative of  the relative entropy \eqref{energia} can be written as
\begin{align}\label{derivada}
\frac{d}{dt}E(t)   = & -a\int_{-\infty}^{V_F} 
\rho_\infty(v)G''(p(v,t))\left[\frac{\partial p}{\partial v}\right]^2(v,t) \ dv \nonumber
\\
\  & -N_\infty\left[G(\eta(t))-G(p(V_R,t))-(r(t)-p(V_R,t))G'(p(V_R,t))-
(\eta(t)-r(t))G'(r(t)\right]
\\
 & + b(N(t)-N_\infty)
\int_{-\infty}^{V_F}\frac{\partial \rho_\infty}{\partial v}(v) 
\left[G(p(v,t))-p(v,t)G'(p(v,t))\right] \ dv. \nonumber
\end{align}
Expression \eqref{derivada} is achieved after some simple computations, 
taking into account that $(\rho,R,N)$ is a solution of equation 
\eqref{nota} and that $(\rho_\infty, R_\infty, N_\infty)$
is the unique steady state  of the same equation, thus given by
\begin{equation*}
\left \{ 
\begin{split}
&\frac{\partial}{\partial v} [h(v,N_\infty)
\rho_\infty(v)]-  a  
\frac{\partial^2\rho_\infty}{\partial v^2}(v) 
= \frac{R_\infty}{\tau}\delta(v-V_R),
\\
& R_\infty=\tau N_\infty,
\quad N_\infty  =  -a
\frac{\partial \rho_\infty}{\partial v}(V_F) \ge 0, 
\\
&  \rho_\infty(-\infty)=0, \quad \rho_\infty(V_F)=0.
\end{split} \right.
\end{equation*}
Specifically, we can obtain sucessively the following relations:
\begin{align}\label{rel_A}
 \frac{\partial p}{\partial t}-\left(v-b N +\frac{2a}{\rho_\infty}
\frac{\partial \rho_\infty}{\partial v} \right)
\frac{\partial p}{\partial v}-a\frac{\partial^2 p}{\partial v^2} 
= 
\frac{R_\infty}{\tau \rho_\infty}
\delta(v-V_R)
\left(r-p\right) - \frac{p}{\rho_\infty}b(N-N_\infty)
\frac{\partial \rho_\infty}{\partial v},
\end{align}
\begin{align}\label{rel_B}
\frac{\partial G\left(p\right)}{\partial t} 
-\left(v-b N +\frac{2a}{\rho_\infty}
\frac{\partial \rho_\infty}{\partial v}\right)
\frac{\partial  G\left(p\right)}{\partial v}
&-
a\frac{\partial^2  G\left(p\right)}{\partial v^2}
= -G'\left(p\right)\frac{p}{\rho_\infty}b(N-N_\infty)\frac{\partial \rho_\infty}{\partial v}
\nonumber\\
-&aG''\left(p\right)\left(\frac{\partial p}{\partial v}\right)^2
+G'\left(p\right)\frac{R_\infty}{\tau \rho_\infty}\delta(v-V_R)\left(r-p
\right),
\end{align}
and
\begin{align}\label{rel_C}
\frac{\partial}{\partial t}\rho_\infty G\left(p\right)
&-\frac{\partial}{\partial v}\left[(v-bN)\rho_\infty 
G\left(p\right)\right]
-a\frac{\partial^2}{\partial v^2}\left[\rho_\infty 
G\left(p\right)\right]
=b(N-N_\infty)\frac{\partial 
\rho_\infty}{\partial v}\left[G\left(p\right)-p G'\left(p\right)\right]
 \nonumber \\
&-a\rho_\infty G''\left(p\right)\left(\frac{\partial p}{\partial v} \right)^2
+\frac{R_\infty}{\tau}\delta(v-V_R)\left[\left(r-p\right)
G'\left(p\right)
+G\left(p\right)\right].
\end{align}
Finally, \eqref{derivada} is obtained after integrating \eqref{rel_C}
 with respect to $v$, between $-\infty$ and $V_F$, taking into account that
\begin{equation*}
a \frac{\partial}{\partial v} 
\left[\rho_\infty G\left(p\right)\right]_{v=V_F}
=-N_\infty  G\left(\eta\right),
\end{equation*}
due to the boundary condition at $V_F$ and the l'Hopital rule,
and adding 
\begin{align}\label{rel_E}
\frac{d}{dt}R_\infty G\left(r\right)=
\frac{R_\infty}{\tau}R_\infty G'\left(r\right)\left(\eta-r\right).
\end{align}
To obtain the exponential rate of convergence stated in the theorem, 
we consider $G(x)=(x-1)^2$ in  \eqref{derivada}.
Its first term  is negative and  will provide the strongest 
control when combined with the   Poincar\'e's  inequality.
After some algebraical computations, the second term can be written as
\begin{align*}
 -N_\infty[G(\eta(t))-G(p(V_R,t)) & -(r(t)-p(V_R,t))G'(p(V_R,t))-
(\eta(t)-r(t))G'(r(t)]
\\
=& -N_\infty[(r(t)-\eta(t))^2 + (r(t)-p(V_R,t))^2].
\end{align*}
Applying the inequality $(a+b)^2\ge\epsilon(a^2-2b^2)$, 
for $a,b \in \R$ and $0<\epsilon<\frac{1}{2}$, we obtain
\begin{equation}\label{sumando1}
 -N_\infty(r(t)-\eta(t))^2 \leq -\epsilon N_\infty (\eta(t)-1)^2+2\epsilon N_\infty(r(t)-1)^2.
\end{equation}
Recalling the Poincar\'e's inequality of \cite{CP}[Proposition 5.3],
and in a similar way as in \cite{carrillo2014qualitative},
for small connectivity parameters,
there exists $\gamma>0$ such that:
\begin{equation}
\label{poincare}
\int_{-\infty}^{V_F}\frac{(\rho-\rho_\infty)^2}{\rho_\infty} dv+
\frac{(R-R_\infty)^2}{R_\infty} \leq 
\frac{1}{\gamma}\left[
\int_{-\infty}^{V_F} 
\rho_\infty(v)\left[\frac{\partial p}{\partial v}\right] ^2 (v,t) \ dv
+N_\infty(r(t)-p(V_R,t))^2
\right],
\end{equation}
thus
\begin{align}\label{sobolev-poincare}
(r(t)-1)^2 \leq \frac{1}{\gamma R_\infty}\int_{-\infty}^{V_F} \rho_\infty(v)\left[\frac{\partial p}{\partial v}\right] ^2 (v,t) \ dv
+\frac{N_\infty}{\gamma R_\infty}(r(t)-p(V_R,t))^2,
\end{align}
and therefore
\begin{equation}\label{sumando2}
2\epsilon N_\infty(r(t)-1)^2 \leq  \frac{2 \epsilon N_\infty}{\gamma R_\infty}\int_{-\infty}^{V_F} \rho_\infty(v)\left[\frac{\partial p}{\partial v}\right] ^2 (v,t) \ dv
+\frac{2 \epsilon N_\infty}{\gamma R_\infty}N_\infty(r(t)-p(V_R,t))^2.
\end{equation}
Joining now estimates \eqref{sumando1} and \eqref{sumando2}, choosing $0<\epsilon<\frac{1}{2}$ such that $\frac{2 \epsilon N_\infty}{\gamma R_\infty}<\min(\frac{a}{2},\frac{1}{2})$ and denoting $C_0:=\epsilon N_\infty$ yields
\begin{align}\label{segundo_sumando}
 & -N_\infty[G(\eta(t))-G(p(V_R,t))-(r(t)-p(V_R,t))G'(p(V_R,t))-(\eta(t)-r(t))
G'(r(t)]\nonumber
\\
\leq & -C_0G(\eta(t))+\frac{a}{2}\int_{-\infty}^{V_F} \rho_\infty(v)
\left[\frac{\partial p}{\partial v}\right] ^2 (v,t) \ dv-\frac{1}{2}N_\infty(r(t)-p(V_R,t))^2.
\end{align}
The third term can be bounded in the same way as in \cite{carrillo2014qualitative}. Thus, for some $C>0$ we have
\begin{align} \label{tercer_sumando}
 & b(N(t)-N_\infty)\int_{-\infty}^{V_F}\frac{\partial \rho_\infty}{\partial v}(v) [G(p(v,t))-p(v,t)G'(p(v,t))] \ dv \\
\leq & C(2b^2+|b|)(\eta(t)-1)^2+a \int_{-\infty}^{V_F} \rho_\infty 
\left[\frac{\partial p}{\partial v}\right]^2(v,t) \ dv
\left(\frac{1}{2}+|b|\int_{-\infty}^{V_F} \rho_\infty(v)(p(v,t)-1)^2 \ dv 
\right). \nonumber
\end{align}
Combining estimates \eqref{segundo_sumando} and \eqref{tercer_sumando} gives the bound
\begin{align*}
\frac{d}{dt}E(t) \leq & -C_0(\eta(t)-1)^2 +C(2b^2+|b|)(\eta(t)-1)^2 
-\frac{1}{2}N_\infty(r(t)-p(V_R,t))^2
\\& -a\int_{-\infty}^{V_F}\rho_ \infty(v) 
\left[ \frac{\partial p}{\partial v} \right] ^2 (v,t) \ dv 
\left(1-|b|\int_{-\infty}^{V_F}\rho_\infty(v)(p(v,t)-1)^2 \ dv \right).
\end{align*}
Taking now $b$ small enough such that $C(2b^2+|b|)\leq C_0$ 
we obtain
\begin{align*}
\frac{d}{dt}E(t) \leq & - \tilde{C} \left[\int_{-\infty}^{V_F}
\rho_ \infty(v) \left[ \frac{\partial p}{\partial v} \right] ^2 (v,t) \ dv  
+N_\infty(r(t)-p(V_R,t))^2\right]
\\& -\frac{a}{2}\int_{-\infty}^{V_F}\rho_ \infty(v) 
\left[ \frac{\partial p}{\partial v} \right] ^2 (v,t) \ dv 
\left(1-2|b|\int_{-\infty}^{V_F}\rho_\infty(v)(p(v,t)-1)^2 \ dv \right)
\\&\leq  -\mu E(t)
 -\frac{a}{2}\left(1-2|b|E(t) \right)
\int_{-\infty}^{V_F}\rho_ \infty(v) 
\left[ \frac{\partial p}{\partial v} \right] ^2 (v,t) \ dv,
\end{align*}
where  Poincar\'e's inequality \eqref{poincare} was used, 
with $\tilde{C}=\min(\frac{a}{2}, \frac{1}{2})$,  $\mu=\tilde{C}\gamma$.
Finally, thanks to the choice of the initial datum \eqref{dato_inicial}
and Gronwall's inequality, the relative entropy decreases 
for all times so that, $E(t)\leq\frac{1}{2|b|}$,  $\forall t\ge0$,
and the result is proved:
\begin{equation*}
E(t)\leq e^{-\mu t} E(0)\leq e^{-\mu t}\frac{1}{2|b|}.
\end{equation*}
\qed
\end{proof}
For two populations with refractory states (as given in model \cite{CP}),
this exponential rate of convergence to the unique steady can
also be proved.
The proof is achieved by considering the full
entropy for  both populations: 
\begin{align}
\mathcal{E}[t]   :=\int_{-\infty}^{V_F}\rho_E^\infty (v) \left( \frac{\rho_E(v) -
\rho_E^\infty (v) }{\rho_E^\infty (v) } \right)^2 \  dv 
& +
\int_{-\infty}^{V_F}\rho_I^\infty (v) \left( \frac{\rho_I(v) -
\rho_I^\infty (v)} {\rho_I^\infty (v) }\right)^2  \ dv 
\nonumber
\\
& +
\frac{(R_E(t)-R_E^\infty)^2}{R_E^\infty} 
+
\frac{(R_I(t)-R_I^\infty)^2}{R_I^\infty},
\nonumber
\end{align}
and proceeding in the same way as in \cite{CS17}[Theorem 4.2], 
 taking into account that now there are some terms with refractory states
which have to be handled, as in Theorem \ref{th: long1}.
\begin{theorem}
Consider system \eqref{modelo} for two populations, with
$M_\alpha(t)=\frac{R_\alpha(t)}{\tau_\alpha}$, $\alpha=I,E$. 
Assume that the connectivity parameters $b_i^\alpha$ are
small enough, the diffusion terms $a_\alpha>0$ are constant,
the transmission delays  $D_i^\alpha$ vanish ($\alpha=I,E$, $i=I,E$), 
 and that the initial data ($\rho_E^0, \rho_I^0$) are
close enough to the unique steady state 
($\rho_E^\infty, \rho_I^\infty$):
\begin{equation}
\nonumber 
\mathcal{E}[0]<\frac{1}{2\max\left(b_E^E+b_I^E,b_E^I+b_I^I \right)}.
\end{equation}
Then, for fast decaying solutions to \eqref{modelo}, 
there is a constant $\mu>0$ such that for all $t\ge0$
\begin{equation*}
\mathcal{E}[t]\le e^{-\mu t} \mathcal{E}[0].
\end{equation*}
Consequently, for $\alpha=E,I$
\begin{equation*}
\int_{-\infty}^{V_F}\rho_\alpha^\infty(v)  
\left( \frac{\rho_\alpha(v) -\rho_\alpha^\infty(v) }
{\rho_\alpha^\infty(v) }\right)^2 
dv + \frac{(R_\alpha(t)-R_\alpha^\infty)^2}{R_\alpha^\infty} 
\leq e^{-\mu t} \mathcal{E}[0].
\end{equation*}
\label{th:long2}
\end{theorem}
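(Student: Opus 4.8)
The plan is to mirror the single-population argument of Theorem \ref{th: long1}, now carried out for the four-component functional $\mathcal{E}[t]$ with the quadratic entropy $G(x)=(x-1)^2$. First I would introduce, for each $\alpha=E,I$, the normalized quantities $p_\alpha=\rho_\alpha/\rho_\alpha^\infty$, $r_\alpha=R_\alpha/R_\alpha^\infty$ and $\eta_\alpha=N_\alpha/N_\alpha^\infty$, and compute $\frac{d}{dt}\mathcal{E}[t]$ by differentiating each of the four summands. Repeating for every density the computation that leads to \eqref{derivada} produces, for each population, a negative entropy-dissipation term $-a_\alpha\int\rho_\alpha^\infty[\partial_v p_\alpha]^2\,dv$, a boundary/refractory term, and a nonlinearity term; as in the one-population case the boundary contribution at $-\infty$ is discarded for fast-decaying solutions. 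The genuinely new feature is that, since $V_0^\alpha=b_E^\alpha N_E-b_I^\alpha N_I+(b_E^\alpha-b_E^E)\nu_{E,ext}$, the drift of each population depends on \emph{both} firing rates, so the nonlinearity term for population $\alpha$ involves the two deviations $(N_E-N_E^\infty)$ and $(N_I-N_I^\infty)$ simultaneously.

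Second, I would treat the boundary/refractory contributions exactly as in Theorem \ref{th: long1}: after the algebraic simplification used there, each population yields $-N_\alpha^\infty[(r_\alpha-\eta_\alpha)^2+(r_\alpha-p_\alpha(V_R))^2]$. I would then apply the elementary inequality $(a+b)^2\ge\epsilon(a^2-2b^2)$ together with Poincar\'e's inequality from \cite{CP}[Proposition 5.3], separately for the excitatory and the inhibitory equation, thereby reproducing for each $\alpha$ the clean bound \eqref{segundo_sumando} that controls $(\eta_\alpha-1)^2$ by a constant $C_0^\alpha$ (times minus the entropy) plus a fraction of the corresponding dissipation. This step is essentially decoupled: each population supplies its own Poincar\'e estimate.

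The main obstacle is the coupled nonlinearity. Following the estimate \eqref{tercer_sumando}, each nonlinearity term $\big[b_E^\alpha(N_E-N_E^\infty)-b_I^\alpha(N_I-N_I^\infty)\big]\int\partial_v\rho_\alpha^\infty[G(p_\alpha)-p_\alpha G'(p_\alpha)]\,dv$ must be bounded by a constant times a sum of connectivity parameters times quadratic expressions in $(\eta_E-1)$, $(\eta_I-1)$ and the two dissipations. The care needed here is to organize these estimates so that the perturbation appearing in the equation for population $\alpha$, whose size is governed by $b_E^\alpha+b_I^\alpha$, is absorbed by the negative terms already available: the dissipations $-a_\alpha\int\rho_\alpha^\infty[\partial_v p_\alpha]^2\,dv$ and the constants $C_0^E,C_0^I$ from the second step. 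Smallness of all $b_i^\alpha$ guarantees that conditions of the type $C(2b^2+|b|)\le C_0$ hold simultaneously for both populations, and grouping the two coefficients entering each drift is precisely why $\max(b_E^E+b_I^E,\,b_E^I+b_I^I)$ is the relevant quantity controlling the cross-coupling.

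Finally, combining all the estimates yields a bound of the form $\frac{d}{dt}\mathcal{E}[t]\le-\mu\,\mathcal{E}[t]-\frac{1}{2}\sum_\alpha a_\alpha\big(1-2\max(b_E^E+b_I^E,b_E^I+b_I^I)\,\mathcal{E}[t]\big)\int\rho_\alpha^\infty[\partial_v p_\alpha]^2\,dv$, so that whenever $\mathcal{E}[t]<\tfrac{1}{2\max(b_E^E+b_I^E,b_E^I+b_I^I)}$ the second term is nonpositive and $\frac{d}{dt}\mathcal{E}[t]\le-\mu\,\mathcal{E}[t]$. The hypothesis $\mathcal{E}[0]<\tfrac{1}{2\max(b_E^E+b_I^E,b_E^I+b_I^I)}$ together with Gronwall's inequality then propagates this smallness for all $t\ge0$ (the entropy can only decrease), giving $\mathcal{E}[t]\le e^{-\mu t}\mathcal{E}[0]$; the per-population statement follows at once because each summand of $\mathcal{E}$ is nonnegative.
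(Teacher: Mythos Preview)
Your proposal is correct and follows essentially the same route the paper indicates: the paper does not spell out a detailed proof but says to consider the full four-term entropy $\mathcal{E}[t]$ and proceed ``in the same way as in \cite{CS17}[Theorem 4.2], taking into account that now there are some terms with refractory states which have to be handled, as in Theorem \ref{th: long1}''---which is precisely what you do. Your identification of the decoupled Poincar\'e/refractory estimates, the cross-coupled nonlinearity producing the combination $b_E^\alpha+b_I^\alpha$ (hence the $\max$ in the smallness condition), and the closing Gronwall argument all match the intended strategy.
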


To conclude the study about the long time behavior we have to
remember that solutions to  \eqref{modelo} may blow-up in finite time if
there are no delays.
Specifically, following  similar
 steps as those developed in \cite{CP}[Theorem 3.1]
and \cite{CS17}[Theorem 3.1], we can prove an analogous result for
the general system \eqref{modelo} without delay between 
excitatory neurons, this is $D_E^E=0$:
\begin{theorem}
Assume that
 \begin{equation}
h^E(v,N_E,N_I)+v \ge b_E^EN_E-b_I^EN_I, 
\label{hip1}
\end{equation}
\begin{equation}
 a_E(N_E,N_I)\ge a_m>0,  \label{hip2}
\end {equation}
$\forall \, v \, \in (-\infty,V_F]$, 
 and $\forall \ N_I,N_E\ge0$.
Assume also that  $D_E^E=0$  and that there exists some $C>0$ such that
\begin{equation}
\label{NI}
\int_0^tN_I(s-D_I^E) \ ds \le C\, t, \quad \quad \forall \,  t\ge 0.
\end{equation}
Then,
a weak solution to the system (\ref{modelo})
cannot be global in time because one
of
 the following reasons:
\begin{itemize}
\item  $b_E^E>0$ is large enough, for $\rho_E^0$ fixed.
\item $\rho_E^0$ is `concentrated enough' around $V_F$:
\begin{eqnarray}
\int_{-\infty}^{V_F} e^{\mu v} \rho_E^0(v) \ dv \ge
\frac{e^{\mu V_F}}{b_E^E \mu},
 \qquad \mbox{ for a certain } \mu>0 \label{cicritica}
\end{eqnarray}
and for $b_E^E>0$ fixed.
\end{itemize}
\label{th_blowup}
\end{theorem}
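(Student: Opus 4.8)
The plan is to argue by contradiction: assuming a weak solution of \eqref{modelo} exists globally in time, I would track the exponentially weighted moment of the excitatory density,
\[
\mathcal{M}_\mu(t):=\int_{-\infty}^{V_F} e^{\mu v}\rho_E(v,t)\,dv,\qquad \mu>0,
\]
and show it must grow without bound, which is impossible because the conservation law \eqref{masa} forces $\mathcal{M}_\mu(t)\le e^{\mu V_F}\int_{-\infty}^{V_F}\rho_E(v,t)\,dv\le e^{\mu V_F}$ for every $t\ge 0$. The first step is to insert the admissible test function $\phi(v)=e^{\mu v}$ into the weak formulation \eqref{debil}; since $\partial_{vv}\phi$ and $v\,\partial_v\phi$ are bounded on $(-\infty,V_F]$, this is legitimate, and after integrating by parts it produces the integral identity whose differential form reads
\[
\frac{d}{dt}\mathcal{M}_\mu=\mu\int_{-\infty}^{V_F} e^{\mu v}\,h^E\big(v,N_E(t),N_I(t-D_I^E)\big)\rho_E\,dv+a_E\mu^2\mathcal{M}_\mu+M_E\,e^{\mu V_R}-N_E(t)\,e^{\mu V_F}.
\]
Here the boundary term $-N_E(t)e^{\mu V_F}$ encodes the outgoing flux at $V_F$ and carries the firing rate at the \emph{current} time $t$, while the drift term carries $N_E(t)$ as well, precisely because $D_E^E=0$.

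Second, I would bound the drift term from below. By \eqref{hip1} one has $h^E\ge -v+b_E^E N_E(t)-b_I^E N_I(t-D_I^E)$, and using $v\le V_F$ to estimate $-\mu\int v e^{\mu v}\rho_E\,dv\ge -\mu V_F\mathcal{M}_\mu$ while discarding the nonnegative source $M_E e^{\mu V_R}\ge 0$ gives
\[
\frac{d}{dt}\mathcal{M}_\mu\ge\big(a_E\mu^2-\mu V_F-\mu b_I^E N_I(t-D_I^E)\big)\mathcal{M}_\mu+N_E(t)\big(\mu b_E^E\mathcal{M}_\mu-e^{\mu V_F}\big).
\]
The key algebraic point is that the two occurrences of $N_E(t)$, one from self-excitation in the drift and one from the boundary flux, appear at the same time exactly because $D_E^E=0$, so they combine into the single sign-definite term $N_E(t)(\mu b_E^E\mathcal{M}_\mu-e^{\mu V_F})$, which is nonnegative as soon as $\mathcal{M}_\mu(t)\ge\mathcal{M}_\mu^\ast:=e^{\mu V_F}/(\mu b_E^E)$. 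A nonzero delay between excitatory neurons would evaluate the drift at $N_E(t-D_E^E)$ and break this matching; this is the analytic reason the statement requires $D_E^E=0$.

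Third, I would fix $\mu$ large. Whenever $\mathcal{M}_\mu(t)\ge\mathcal{M}_\mu^\ast$ the last term is nonnegative and can be dropped, leaving $\tfrac{d}{dt}\mathcal{M}_\mu\ge\lambda(t)\mathcal{M}_\mu$ with $\lambda(t)=a_E\mu^2-\mu V_F-\mu b_I^E N_I(t-D_I^E)$. Integrating and invoking \eqref{hip2} ($a_E\ge a_m$) together with the linear control \eqref{NI} ($\int_0^t N_I(s-D_I^E)\,ds\le Ct$) yields $\int_0^t\lambda(s)\,ds\ge \mu t\,(a_m\mu-V_F-b_I^E C)=:\beta(\mu)\,t$, and choosing $\mu>(V_F+b_I^E C)/a_m$ makes $\beta(\mu)>0$. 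Here \eqref{hip2} is what lets the diffusive gain $a_m\mu^2$ dominate the linear-in-$\mu$ losses, and \eqref{NI} is what tames the otherwise sign-indefinite inhibitory contribution. The final step is a barrier/continuation argument: if $\mathcal{M}_\mu(0)>\mathcal{M}_\mu^\ast$ and $T^\ast$ is the first time $\mathcal{M}_\mu$ returns to $\mathcal{M}_\mu^\ast$, then on $[0,T^\ast)$ one has $\mathcal{M}_\mu(t)\ge\mathcal{M}_\mu(0)e^{\beta(\mu)t}$; letting $t\to T^{\ast-}$ forces $\mathcal{M}_\mu(T^\ast)>\mathcal{M}_\mu(0)>\mathcal{M}_\mu^\ast$, a contradiction, so $\mathcal{M}_\mu$ never falls back to the threshold and instead satisfies $\mathcal{M}_\mu(t)\ge\mathcal{M}_\mu(0)e^{\beta(\mu)t}\to\infty$, contradicting the ceiling $\mathcal{M}_\mu\le e^{\mu V_F}$.

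It remains to verify the initial moment exceeds $\mathcal{M}_\mu^\ast$ in each regime. With $\rho_E^0$ fixed and $b_E^E$ large, $\mathcal{M}_\mu^\ast=e^{\mu V_F}/(\mu b_E^E)\to 0$, so $\mathcal{M}_\mu(0)=\int e^{\mu v}\rho_E^0\,dv\ge\mathcal{M}_\mu^\ast$ once $b_E^E$ is large enough, giving the first bullet. With $b_E^E$ fixed, taking $\mu$ large enough that $\beta(\mu)>0$ turns $\mathcal{M}_\mu(0)\ge\mathcal{M}_\mu^\ast$ into exactly condition \eqref{cicritica}, which holds when $\rho_E^0$ concentrates enough mass near $V_F$, giving the second bullet. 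I expect the main obstacle to be the barrier step rather than the moment computation: because \eqref{NI} controls only the time-integral of $N_I(\cdot-D_I^E)$ and not its pointwise size, the instantaneous coefficient $\lambda(t)$ may be very negative, so one cannot keep $\mathcal{M}_\mu$ above the threshold by a pointwise comparison. Feeding the \emph{averaged} rate $\beta(\mu)$ into a first-crossing argument is what circumvents this and simultaneously rules out both returning to $\mathcal{M}_\mu^\ast$ and respecting the ceiling $e^{\mu V_F}$, yielding the contradiction.
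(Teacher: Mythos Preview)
Your argument is correct and is precisely the exponential-moment method the paper invokes by reference to \cite{CP}[Theorem~3.1] and \cite{CS17}[Theorem~3.1]: test against $\phi(v)=e^{\mu v}$, use \eqref{hip1}--\eqref{hip2} and \eqref{NI} to obtain a Gronwall inequality above the threshold $e^{\mu V_F}/(\mu b_E^E)$, and contradict the a priori bound $\mathcal{M}_\mu\le e^{\mu V_F}$. Your observation that $D_E^E=0$ is exactly what makes the drift and boundary contributions of $N_E$ combine into the single sign-definite factor is the key mechanism, and your first-crossing argument correctly handles the fact that \eqref{NI} only bounds the time average of $N_I$.
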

Therefore, thanks to Theorem \ref{th:long2} and Theorem \ref{th_blowup}, 
we may conclude that, even with a unique steady state, if system
\eqref{modelo}  has inmediate spike transmissions between 
excitatory neurons, (that is 
$D_E^E=0$) then solutions can blow-up, whether
initially they are close enough to the threshold potential
or whether the excitatory neurons are highly connected (that is
$b_E^E$ is large enough).
In the following numerical experiments we will show that the 
transmission delay between excitatory neurons prevent the blow-up phenomenon,
but the remaining  transmission delays cannot avoid it.

\section{Numerical experiments}
\label{sec: numerics}
\subsection{Numerical Scheme}

The numerical scheme used to simulate equation \eqref{nota} approximates the advection term by a fifth order 
 finite difference flux-splitting Weighted Essentially Non-Oscillatory
(WENO) scheme. 
The flux-splitting considered is the Lax-Friedrich splitting  \cite{S}

\begin{equation*}
f^\pm(\rho)=\frac{1}{2}(f(\rho) \pm \alpha \rho) \quad \textrm{where} \quad \alpha=\max_\rho|f'(\rho)|.
\end{equation*}
 In our case $f(\rho)=h(v,N)\rho$, and thus
 $ \alpha=\max_{v \in (-\infty, V_F)} |h(v,N)|$.
The diffusion term is estimated by standard second order finite 
differences and the time evolution is calculated by an explicit third order 
Total Variation Diminishing (TVD)  Runge-Kutta method.

 Due to the delay, during the time evolution of the solution we have to recover the value of $N$ at time $t-D$, for every time $t$.
 To implement this, we fix a time step $\overline{dt}$ 
and define an array of $M=\frac{D}{\overline{ dt}}$ positions.
Therefore, this array will save only M 
values of $N(t)$ for a time interval  $[kD,(k+1)D)$, $k=0,1,2, ...$ 
In the time interval  $[(k+1)D,(k+2)D)$ these values of the array 
will be used to obtain the delayed values $N(t-D)$ by linear interpolation
between the corresponding positions of the array. 
We assume that $N(t)=0$ $\forall t<0$, so initially 
all the values of  the array are zero, and the recovered values 
for the first time interval ($k=0$) are all zero. 
Notice that we use linear interpolation since the time step $dt$ 
 for the time evolution  is taken according to the 
Courant-Friedrich-Levy (CFL) condition.
Furthermore, once a position of the array is no longer necessary for the 
interpolation, it is overwritten.

The refractory state used in \cite{brunel} is based on considering a delayed firing rate, $N(t-\tau)$, on the right hand side of the PDE for $\rho$. This value is recovered in the same manner as the delayed $N$ that appears due to the transmission delay. The refractory period $\tau$ and the delay $D$ do not usually coincide, and thus the firing rates have to be saved in two different arrays. The refractory state for which $M(t)=\frac{R(t)}{\tau}$ was implemented using a finite difference approximation of its ODE.

\

The numerical approximation of the solution for the two-populations model  was  implemented using the same numerical scheme as that described above for one population. The main difference here is that the code runs over two cores using parallel computational techniques, following the ideas in \cite{CS17}. Each core handles the equations of one of the populations. At the end of every time step the cores communicate via Message Passing Interface  (MPI) to exchange the values of the firing rates. Also the transmission delays were handled as for one population, taking  into account that now each processor has to save two arrays of firing rates, one for each population, since there are four different delays.  The approximation of the different refractory states was done as for 
one population.

\subsection{Numerical results}

 For the following simulations we will consider a uniform mesh for $v \in [-V_{left}, V_F]$, where 
$-V_{left}$ is chosen so that $\rho_\alpha(-V_{left},t) \sim 0$. Moreover, unless otherwise specified,
 $V_F=2$, $V_R=1$, $ \nu_{E,ext}=0$ and 
$a_\alpha(N_E,N_I)= 1$.
We will consider two different types of initial condition:
\begin{equation}
\rho^0_\alpha(v)=
\frac{k}{\sqrt{2 \pi}}e^{-\frac{(v-v_0^\alpha)^2}{2 {\sigma_0^\alpha}^2}},
\label{ci_maxwel}
\end{equation}
where $k$ is a constant such that
$\displaystyle\int_{-V_{left}}^{V_F}\rho^0_\alpha(v) \ dv \approx 1$ numerically,
and
\begin{equation}
\rho_\alpha^0(v) =
\frac{N_\alpha}{a_\alpha(N_E,N_I)}
e^{-\frac{(v-V_0^\alpha(N_E,N_I))^2}{2a_\alpha(N_E,N_I)}}
\int_{\max(v,V_R)}^{V_F}
e^{\frac{(w-V_0^\alpha(N_E,N_I))^2}{2a_\alpha(N_E,N_I)}} \ dw,
\quad \alpha=E,I, \label{soleq}
\end{equation}
with $V_0^\alpha (N_E,N_I)=b_E^\alpha N_E - b_I^\alpha N_I +
(b_E^\alpha-b_E^E)\nu_{E,ext}$  and where $N_\alpha$ is an approximated value
 of the stationary firing rate.
The second kind of initial data is an approximation of the steady states
of the system and allows us to study their local stability.

Notice that we will also refer to \eqref{ci_maxwel}  as the initial condition for the one-population model by just considering $\rho_\alpha=\rho$, $v_0^\alpha=v_0$ and $\sigma_0^{\alpha 2}=\sigma_0^2$.


\subsubsection{Analysis of the number of steady states}

As a first step in our numerical analysis we illustrate numerically some
of  the 
results of Theorem \ref{th: steady states}.
Fig. \ref{bif_EI} shows the behaviour of 
$\mathcal{F}(N_E):= N_E[I_1(N_E,N_I(N_E)+\tau_E]$ for different parameter 
values, which produces bifurcation diagrams.
In the  figure on the left we observe the influence of 
the excitatory refractory period $\tau_E$, considering
fixed the rest of parameters; a large $\tau_E$ gives rise to
the uniqueness of the steady state. In figure on the right one,
the impact of the connectivity parameter $b_E^E$ is described.
In this case, a small $b_E^E$  guarantees a unique stationary solution.
Moreover,  as noted in Remark \ref{remark}, we observe the uniqueness of the
steady state
if the system is highly
connected between excitatory and inhibitory neurons, or if the excitatory
neurons have enough refractory period.

As happens in the case of only one population \cite{CP}, for two populations
(excitatory and inhibitory), neurons in a refractory state guarantee the
existence of stationary states.
(However, the refractory state itself does not prevent the blow-up phenomenon, as
we will show later).

\subsubsection{Blow-up}

In \cite{CP}, the blow-up phenomenon for one population
of neurons with refractory states was shown. 
Theo\-rem \ref{th_blowup} extends this result to two populations of neurons,
 one excitatory and  the other one inhibitory.
The refractory period is not enough to deter the
blow-up of the network; if the membrane potentials of the excitatory population 
are close to the threshold potential, or if the connectivity 
parameter $b_E^E$ is large enough, 
then the network blows-up in finite time.
To achieve the global-in-time existence, it seems necessary some 
transmission delay between excitatory neurons, 
as we observe in our simulations and as it was proved
at the microscopic level for one population \cite{delarue2015particle}.

We start the analysis of the blow-up phenomenon 
 by considering only 
one  average-excitatory population  (we recall that there is global existence
for one average-inhibitory population, see \cite{CGGS}).
In \cite{CCP,CP} it was proved that some solutions blow-up.
In Fig. \ref{blowup_1pob}, we show how the transmission
delay of the spikes between neurons prevents the network from
 blowing-up in finite
time. Have the networks refractory states or not, we observe that
 the blow-up phenomenon appears in absence of a transmission delay.

In \cite{CS17}, the excitatory-inhibitory system without
refractory states was studied. In the current paper, we extend this analysis
to the presence of refractory states.
Figs. \ref{blowup_EIR_bEE} and  \ref{blowup_EIR_ci}
illustrate the results of Theorem \ref{th_blowup}; if there is
no transmission delay between excitatory neurons,  the solution blows-up
because  most of the excitatory neurons have a membrane potential close
to the threshold potential, or because excitatory neurons are highly connected, 
that is, $b_E^E$ is large enough.
We observe in Fig. \ref{blowup_EIR_ci_delay} that the remaining delays 
do not avoid the blow-up phenomenon, since in this figure 
all the delays are 0.1, except $D_E^E=0$.
The importance of  $D_E^E$ is discerned in Fig. \ref{noblowup_EIRD_ci}.
We show the evolution in time
of the solution of \eqref{modelo}, with the same initial data
as considered in Fig. \ref{blowup_EIR_ci}
and with  $D_E^E=0.1$; in this case, 
the solution exists for every time, thus avoiding the blow-up.

\subsubsection{Steady states and periodic solutions}

In Fig. \ref{bif_EI} we examined several choices of the model
parameters, for which the system \eqref{modelo} presents
three steady states. For one of these cases,
the  analysis of their stability
is numerically investigated in Fig. \ref{est_eq_EI1}.
For $\alpha=E,I$, the initial conditions $\rho_\alpha^0-1,2,3$ 
are given by the profiles \eqref{soleq}, 
where $N_\alpha$ are approximations of the stationary firing rates.
The evolution in time of the probability densities, the firing
rates and the refractory states show that the lower steady state
seems to be stable, while the two others are unstable. Moreover,
considering as initial data \eqref{soleq}
with $N_\alpha$ approximations of the higher stationary 
firing rates the solution blows-up
in finite time, while with the intermediate firing rate the solution
tends to the lower steady state.
Fig. \ref{est_eq_EI2} also describes the stability when there are
three steady states. In this case the intermediate state
is very close to the highest one. Here,
the lower steady state  also appears to be stable.
The two others are unstable, but the higher one does not blow-up
in finite time. 

The transmission delay not only prevents the blow-up phenomenon, but
also should produce periodic solutions.
In Fig. \ref{osci_R_MJ-1}, we analyze the influence of the transmission delay
for one  average-excitatory population; if the initial datum is concentrated
around $V_F$,   periodic solutions appear; on the contrary,
 if it is far from $V_F$, 
the solution reaches a steady state. 
In Figs. \ref{osci_R_MJ-2} and \ref{osci_R_MJ-3},
for one average-inhibitory population with
transmission delay, we show that 
periodic solutions emerge if the initial condition is
concentrated around the
threshold potential, and even if the initial datum is far from
the threshold and  $v_{ext}$ is large.
A  comparison between $R(t)$ and $N(t)$ for 
$M(t)=\frac{R(t)}{\tau}$ and $M(t)=N(t-\tau)$ is presented 
in Fig. \ref{blowup_ERD_ci_MJ}.
In both cases the steady state is the same and  the solutions tend to it. If the system tends to a synchronous
state, these states are also almost the same for both possible
choices of  $M$.

Synchronous states appear also in the case of two populations
(excitatory and inhibitory), as it is described in Fig. \ref{oscilaciones_EI}.
In this particular case, they seem to appear due to the inhibitory population, 
which tends to a periodic solution. What is more, the excitatory population 
presents a solution that oscillates close around the equilibrium.

\begin{figure}[H]
\begin{center}
\begin{minipage}[c]{0.33\linewidth}
\begin{center}
\includegraphics[width=\textwidth]{./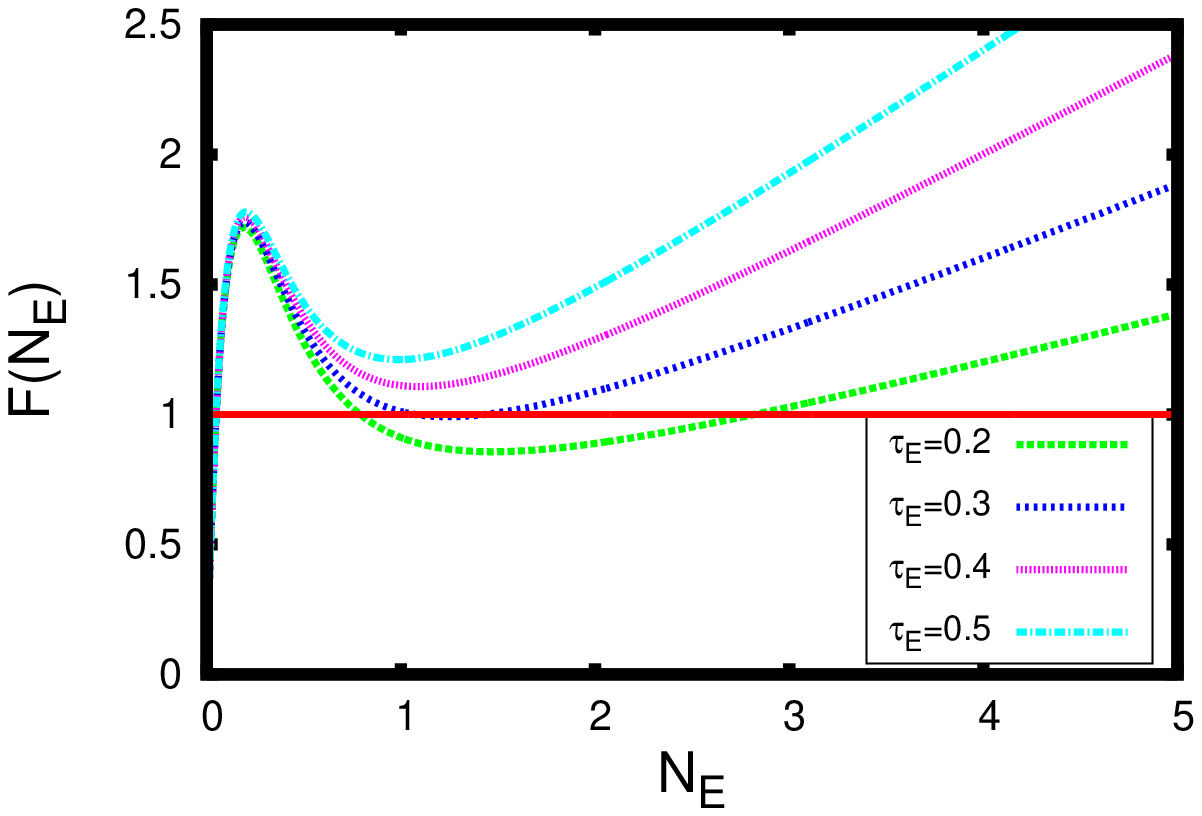}
\end{center}
\end{minipage}
\begin{minipage}[c]{0.33\linewidth}
\begin{center}
\includegraphics[width=\textwidth]{./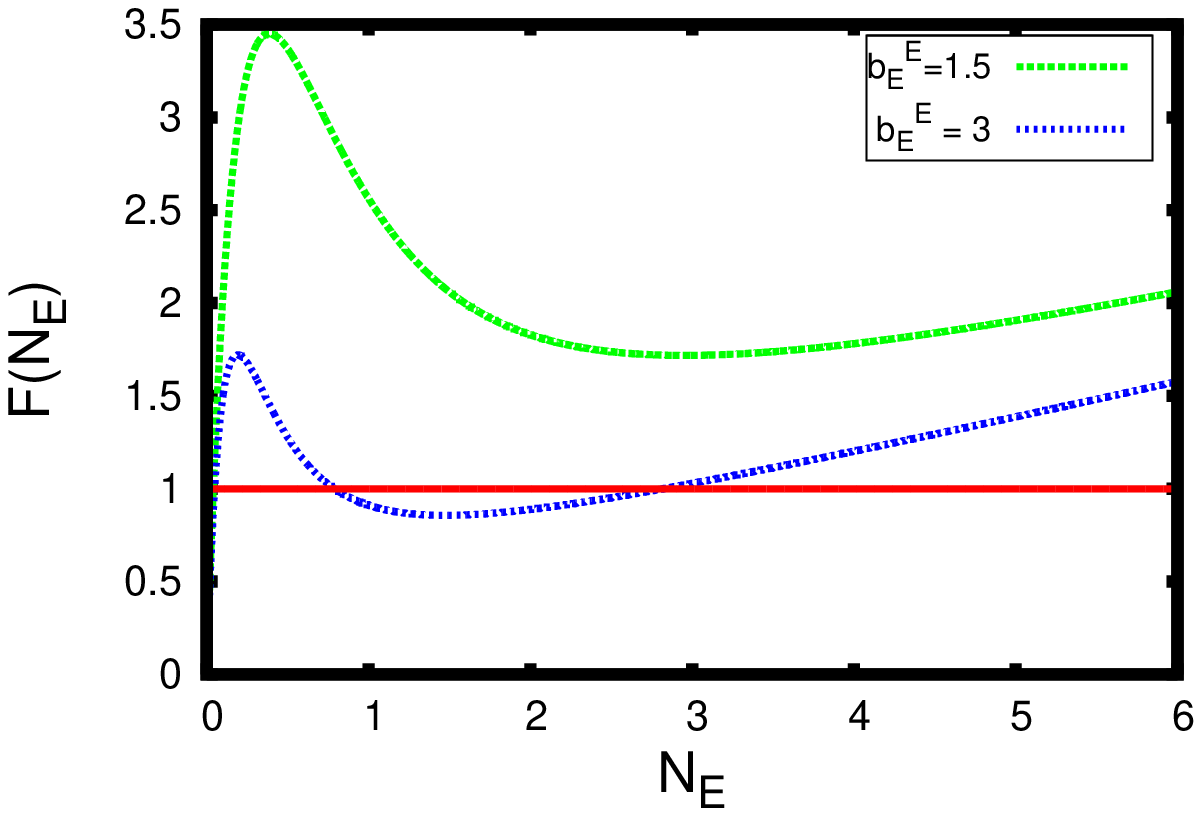}
\end{center}
\end{minipage}
\end{center}

\caption{
{\bf Number of steady states for system \eqref{modelo} described by Theorem
\ref{th: steady states}.-} 
Left: For fixed $b_I^E=7$, $b_I^I=2$, $b_E^I=0.01$, $b_E^E=3$ and
$\tau_I=0.2$, we observe the influence of the excitatory
 refractory period $\tau_E$.
Right: For fixed $b_I^E=7$, $b_I^I=2$, $b_E^I=0.01$ and $\tau_E=\tau_I=0.2$,
we observe the influence of the connectivity parameter $b_E^E$.}
\label{bif_EI}
\end{figure}
\begin{figure}[H]
\begin{center}
\begin{minipage}[c]{0.33\linewidth}
\begin{center}
\includegraphics[width=\textwidth]{./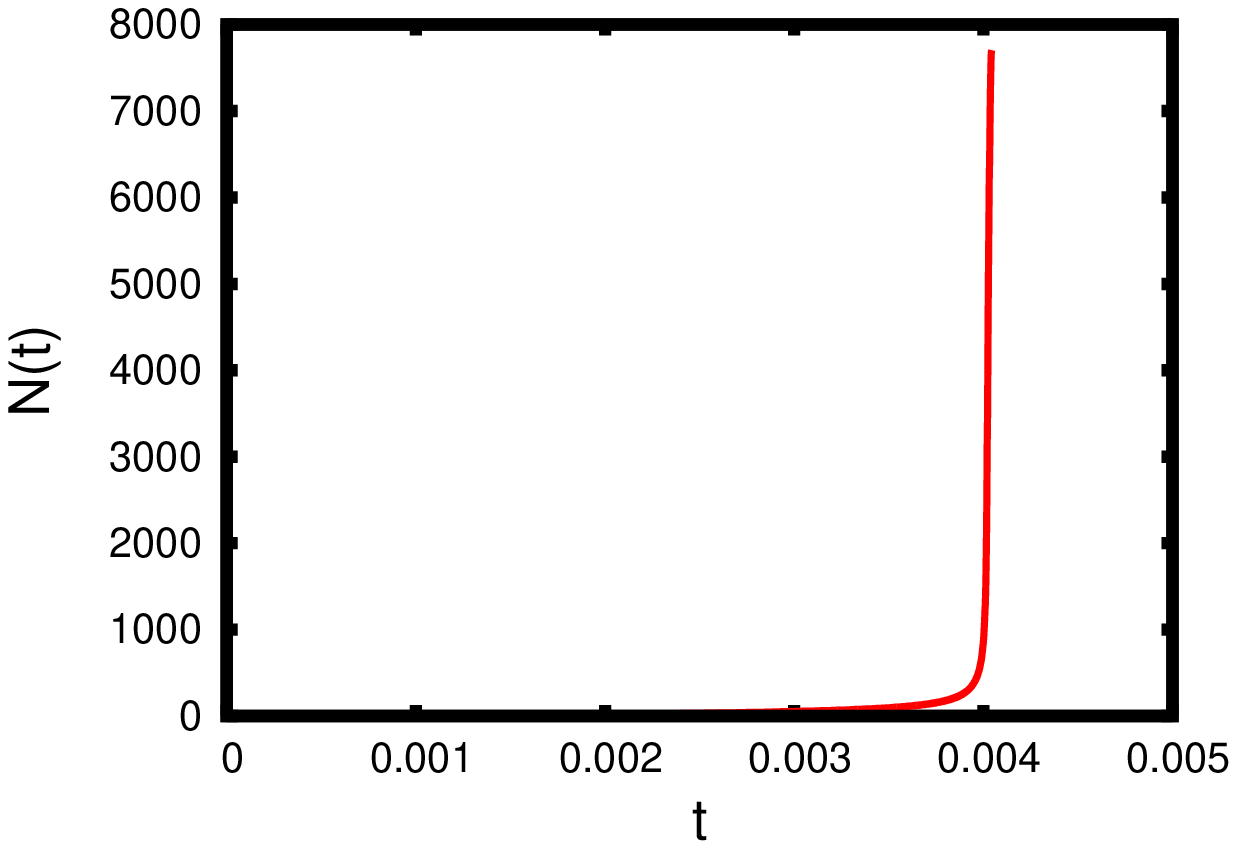}
\end{center}
\end{minipage}
\begin{minipage}[c]{0.33\linewidth}
\begin{center}
\includegraphics[width=\textwidth]{./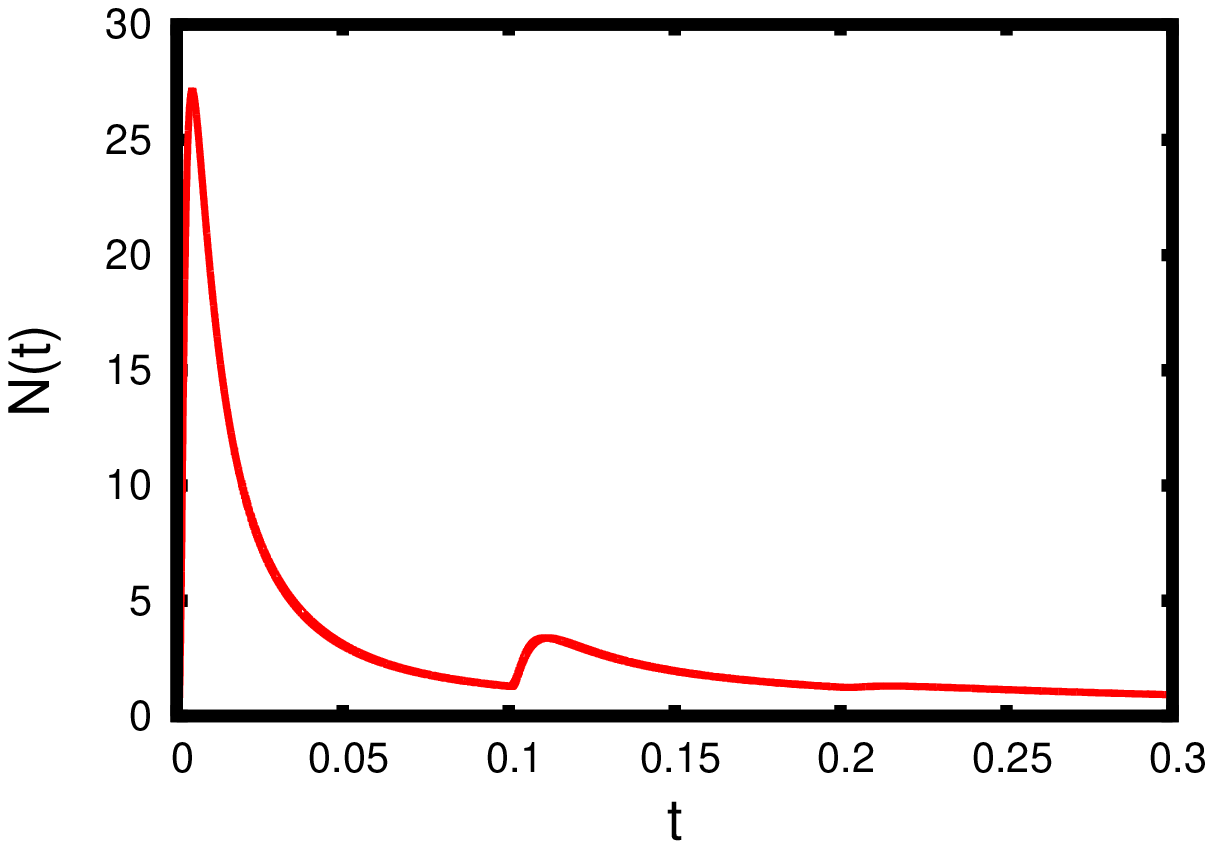}
\end{center}
\end{minipage}
\end{center}
\begin{center}
\begin{minipage}[c]{0.33\linewidth}
\begin{center}
\includegraphics[width=\textwidth]{./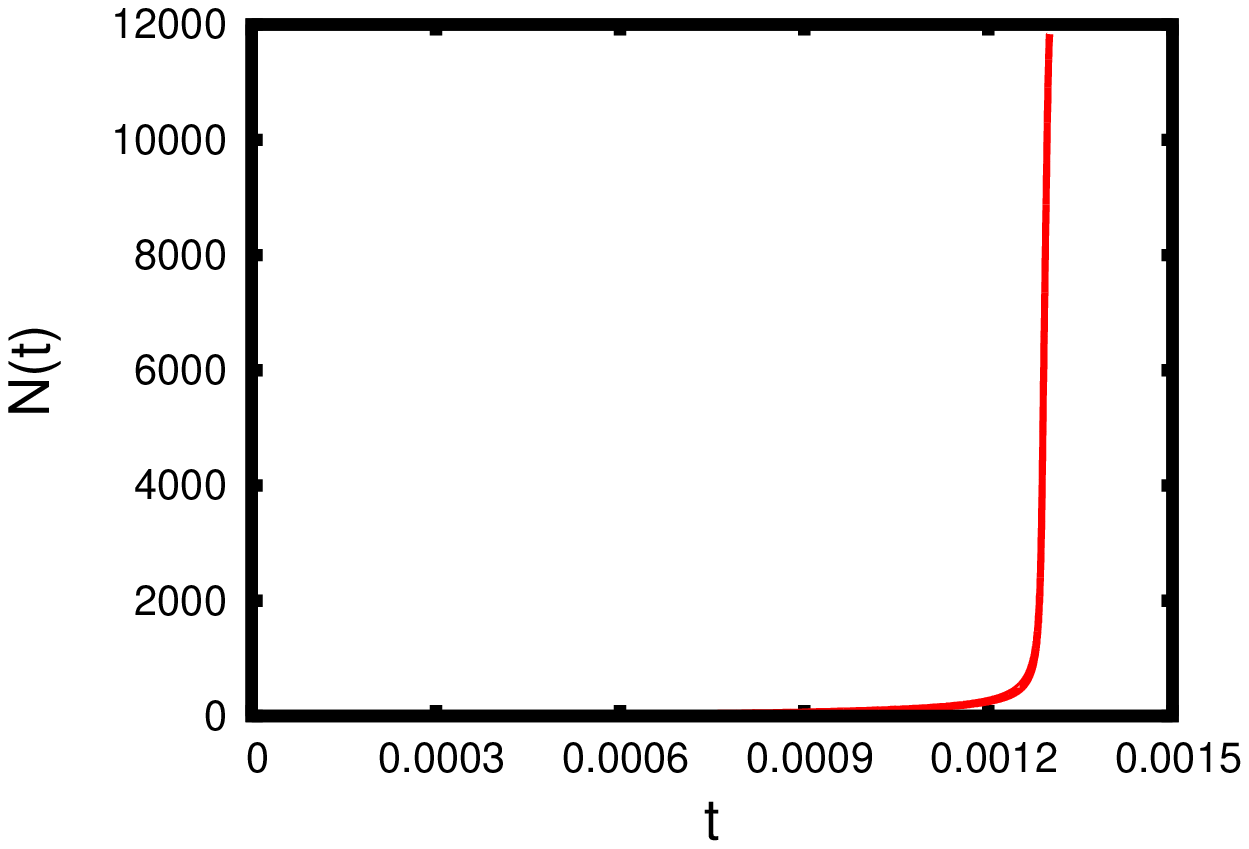}
\end{center}
\end{minipage}
\begin{minipage}[c]{0.33\linewidth}
\begin{center}
\includegraphics[width=\textwidth]{./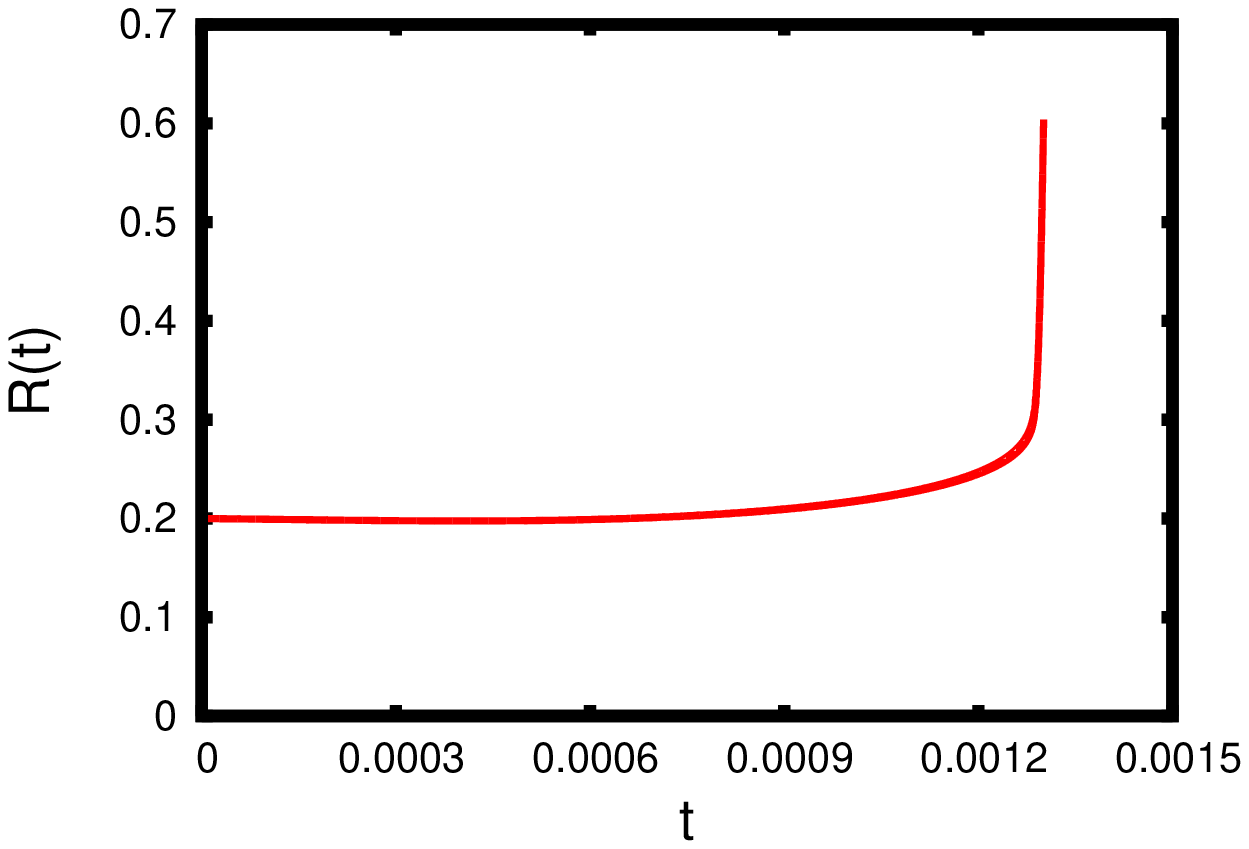}
\end{center}
\end{minipage}
\end{center}
\begin{center}
\begin{minipage}[c]{0.33\linewidth}
\begin{center}
\includegraphics[width=\textwidth]{./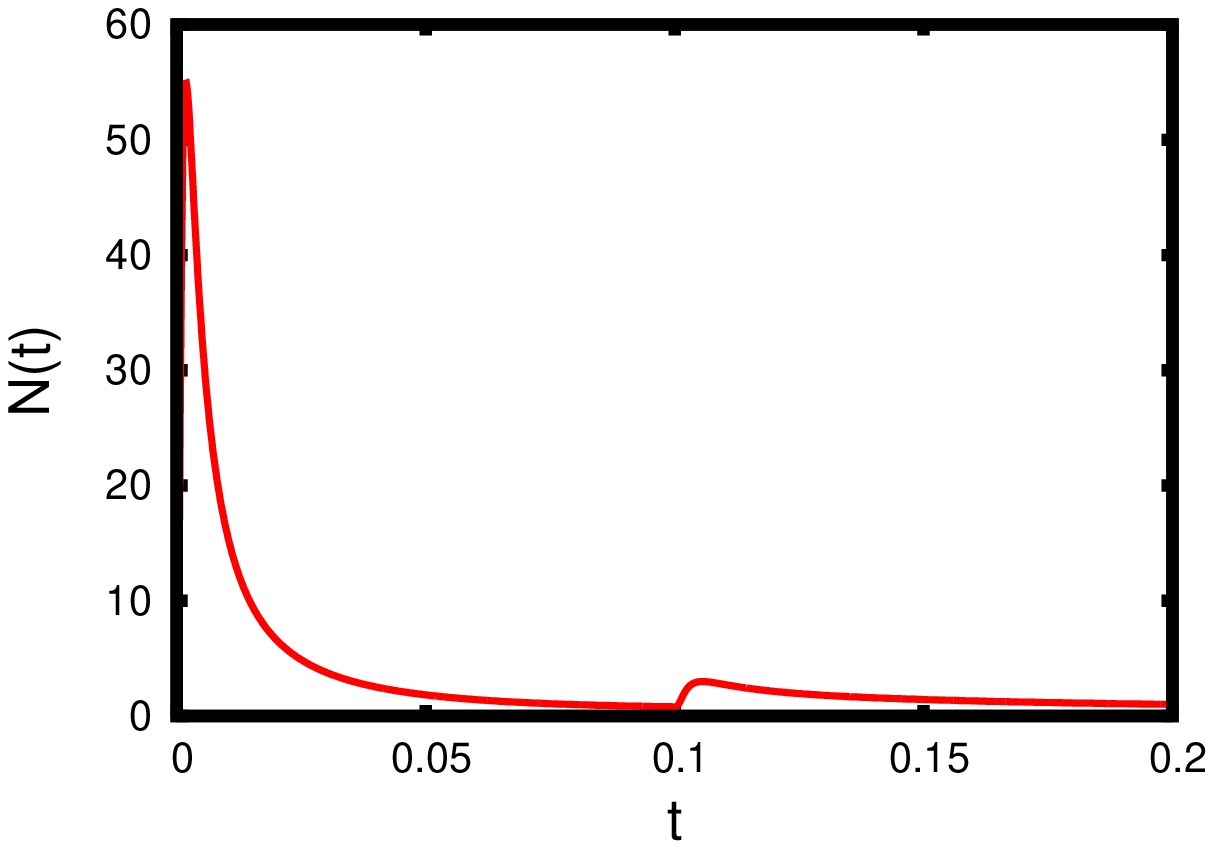}
\end{center}
\end{minipage}
\begin{minipage}[c]{0.33\linewidth}
\begin{center}
\includegraphics[width=\textwidth]{./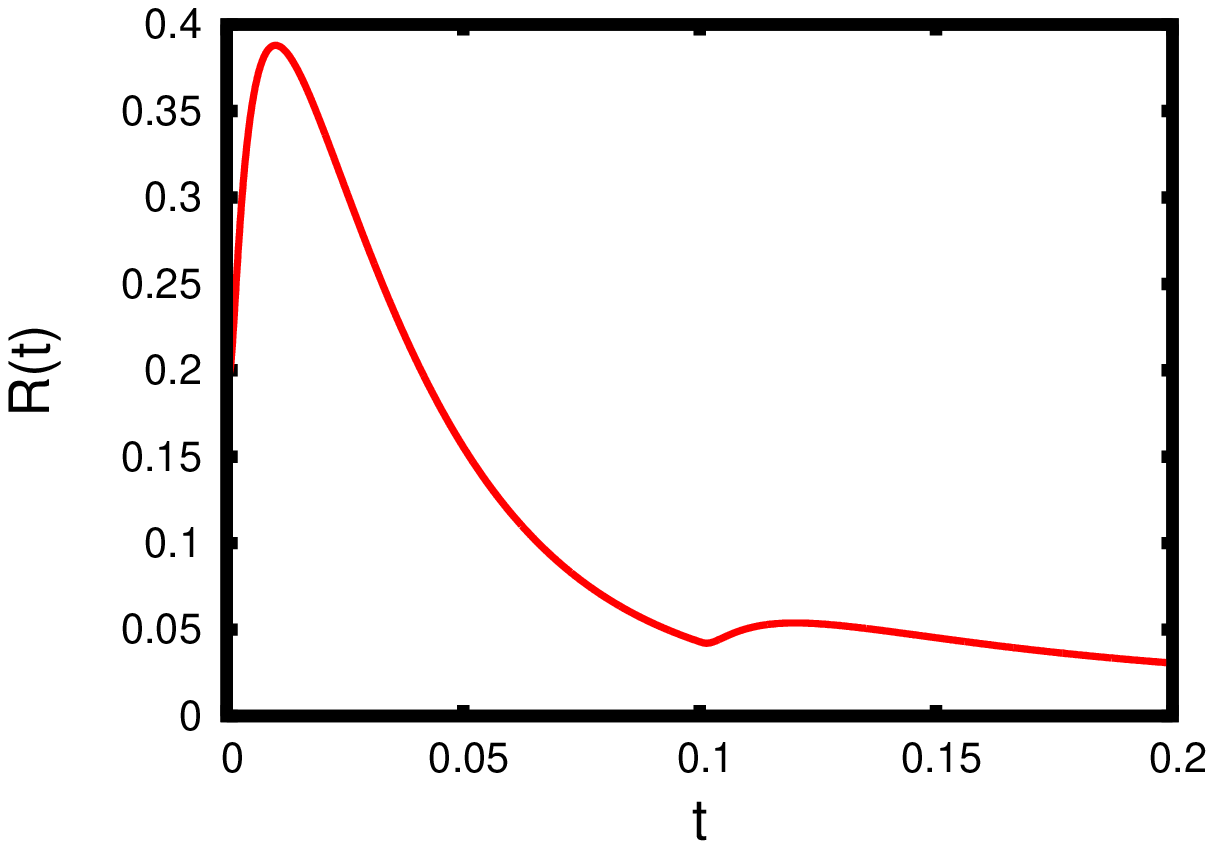}
\end{center}
\end{minipage}
\end{center}
\caption{{\bf System \eqref{nota} (only one population) presents blow-up,
 if there is no transmission delay.-}
We consider the initial data \eqref{ci_maxwel} with $v_0=1.83$, 
and $\sigma_0=0.0003$,  and the connectivity parameter $b=0.5$. 
Top:  Without refractory state;
Left: $N$ blows-up in finite time, if  there is no delay, $D=0$.
Right: $N$ does not blow-up if there is delay,  $D=0.1$.
\newline
Middle: 
With refractory state ($M(t)=\frac{R(t)}{\tau}$), 
$R(0)=0.2$, $\tau=0.025$ and $D=0$,
since there is no transmission delay  $N$ and $R$ blow-up in finite
time.
\newline
Bottom: 
With refractory state ($M(t)=\frac{R(t)}{\tau}$), 
$R(0)=0.2$, $\tau=0.025$ and $D=0.07$, 
the solution tends to the steady state, due to the transmission delay. 
}
\label{blowup_1pob}
\end{figure}
\begin{figure}[H]
\begin{minipage}[c]{0.33\linewidth}
\begin{center}
\includegraphics[width=\textwidth]{./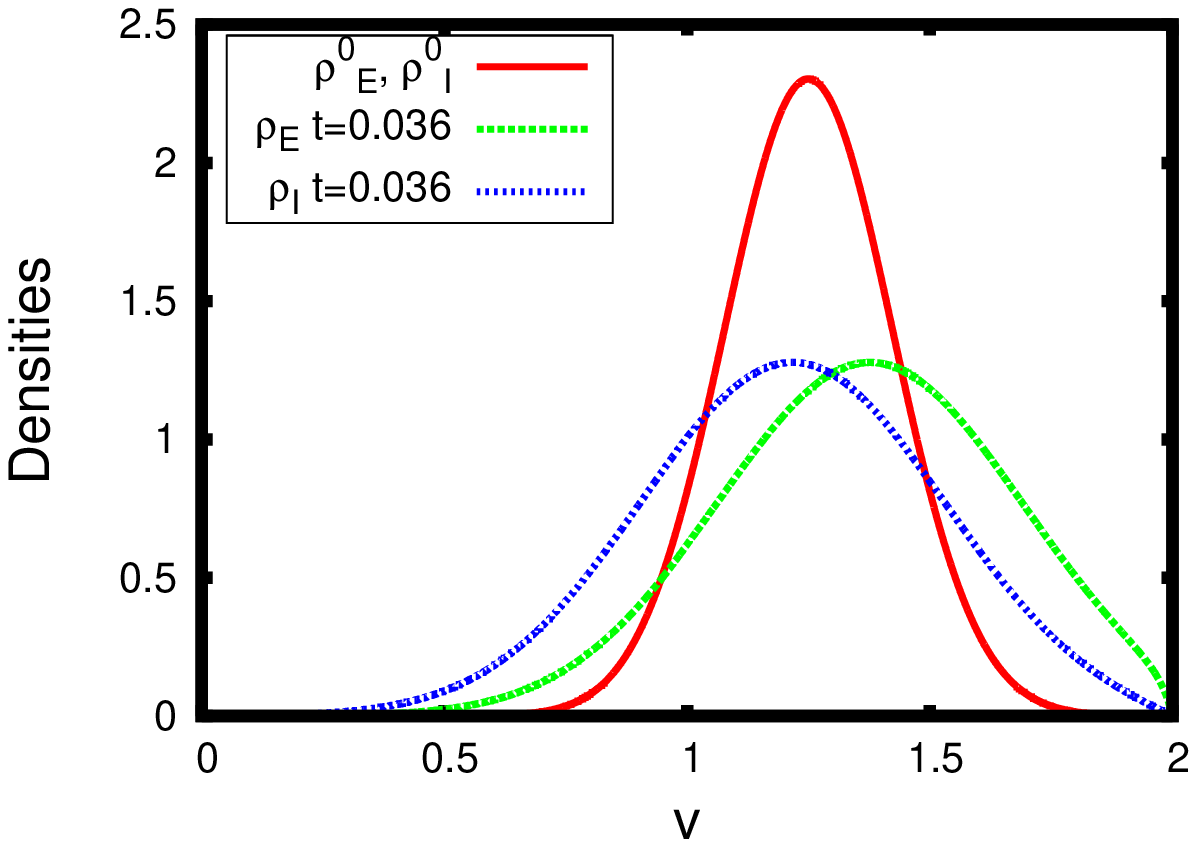}
\end{center}
\end{minipage}
\begin{minipage}[c]{0.33\linewidth}
\begin{center}
\includegraphics[width=\textwidth]{./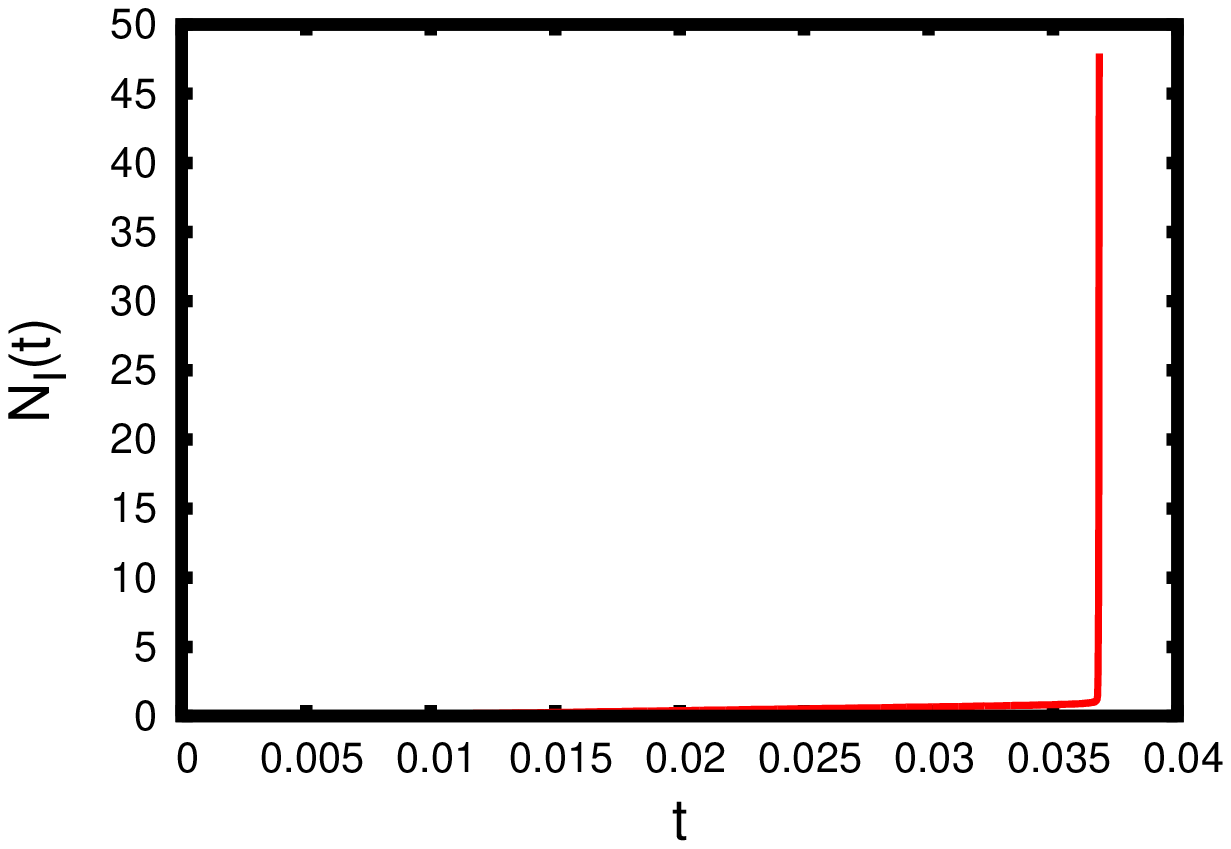}
\end{center}
\end{minipage}
\begin{minipage}[c]{0.33\linewidth}
\begin{center}
\includegraphics[width=\textwidth]{./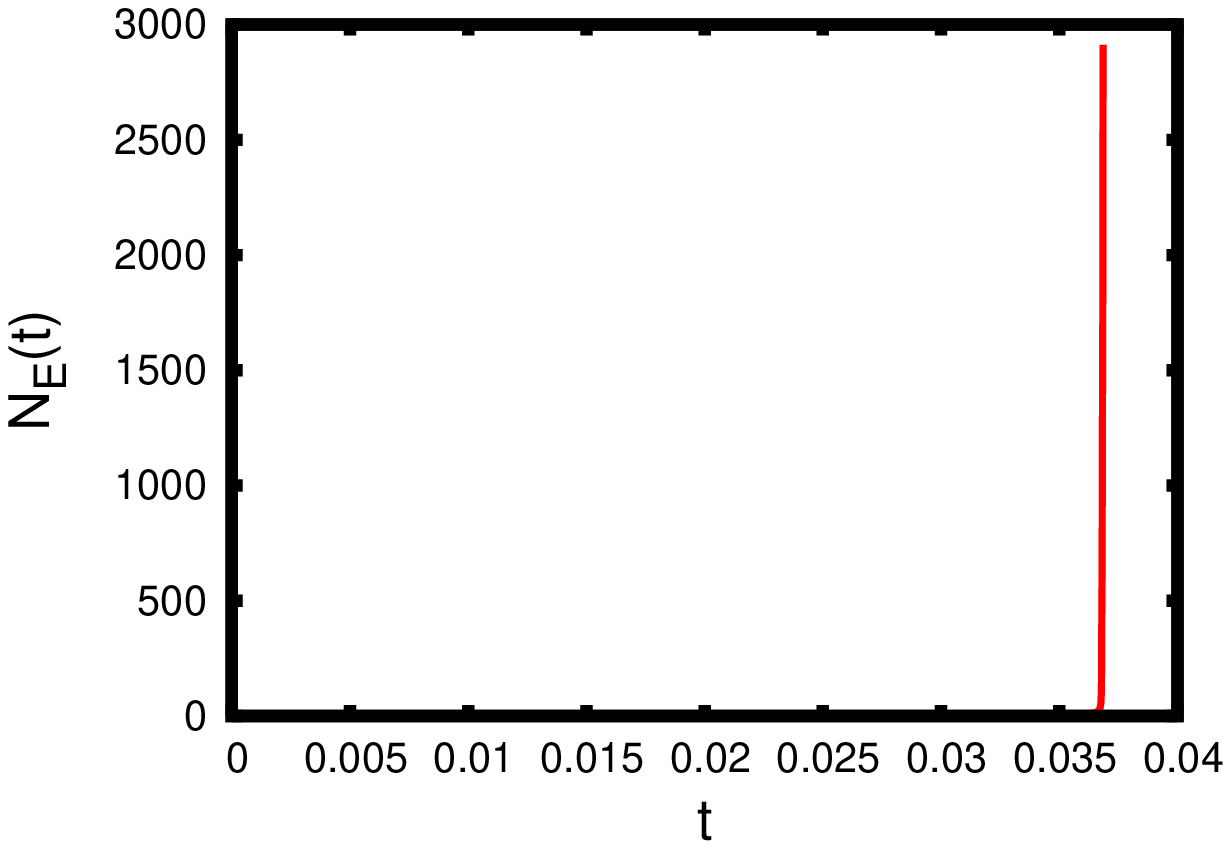}
\end{center}
\end{minipage}

\begin{center}
\begin{minipage}[c]{0.33\linewidth}
\begin{center}
\includegraphics[width=\textwidth]{./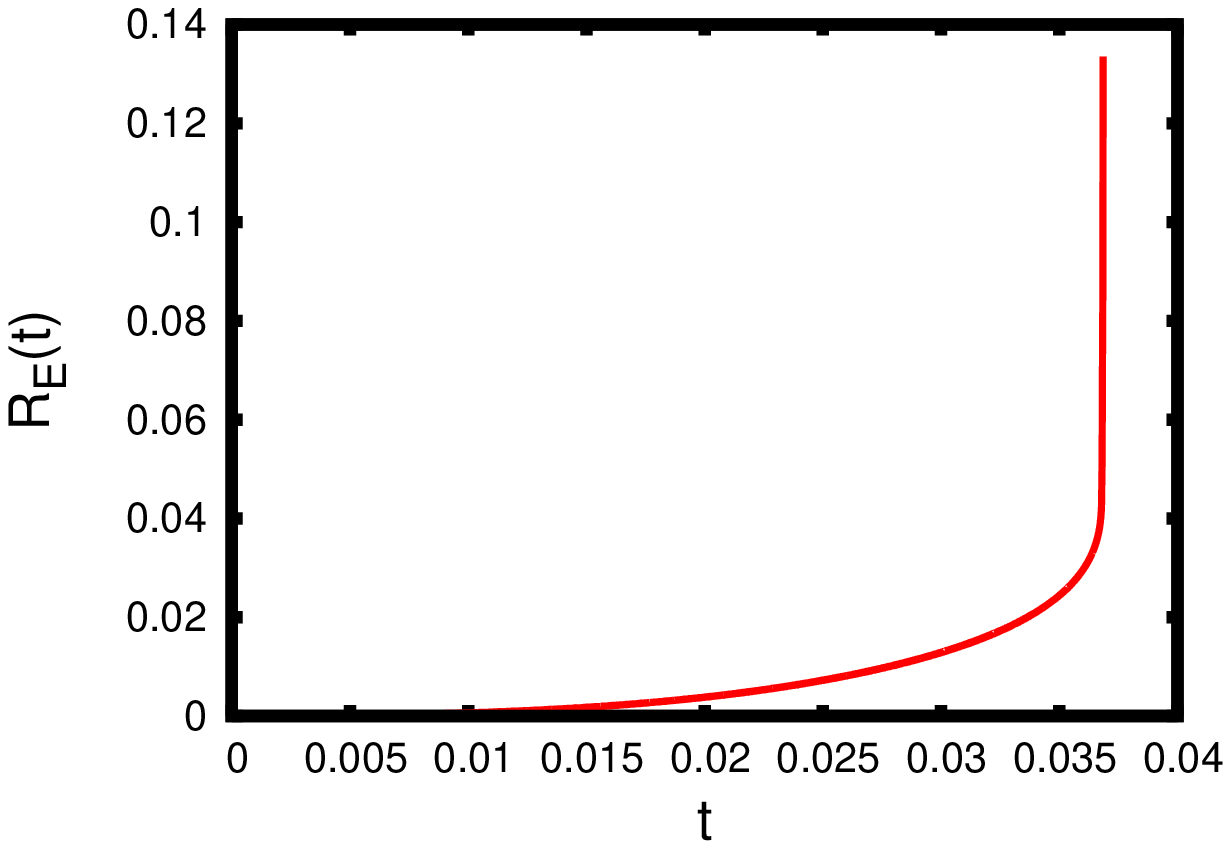}
\end{center}
\end{minipage}
\begin{minipage}[c]{0.33\linewidth}
\begin{center}
\includegraphics[width=\textwidth]{./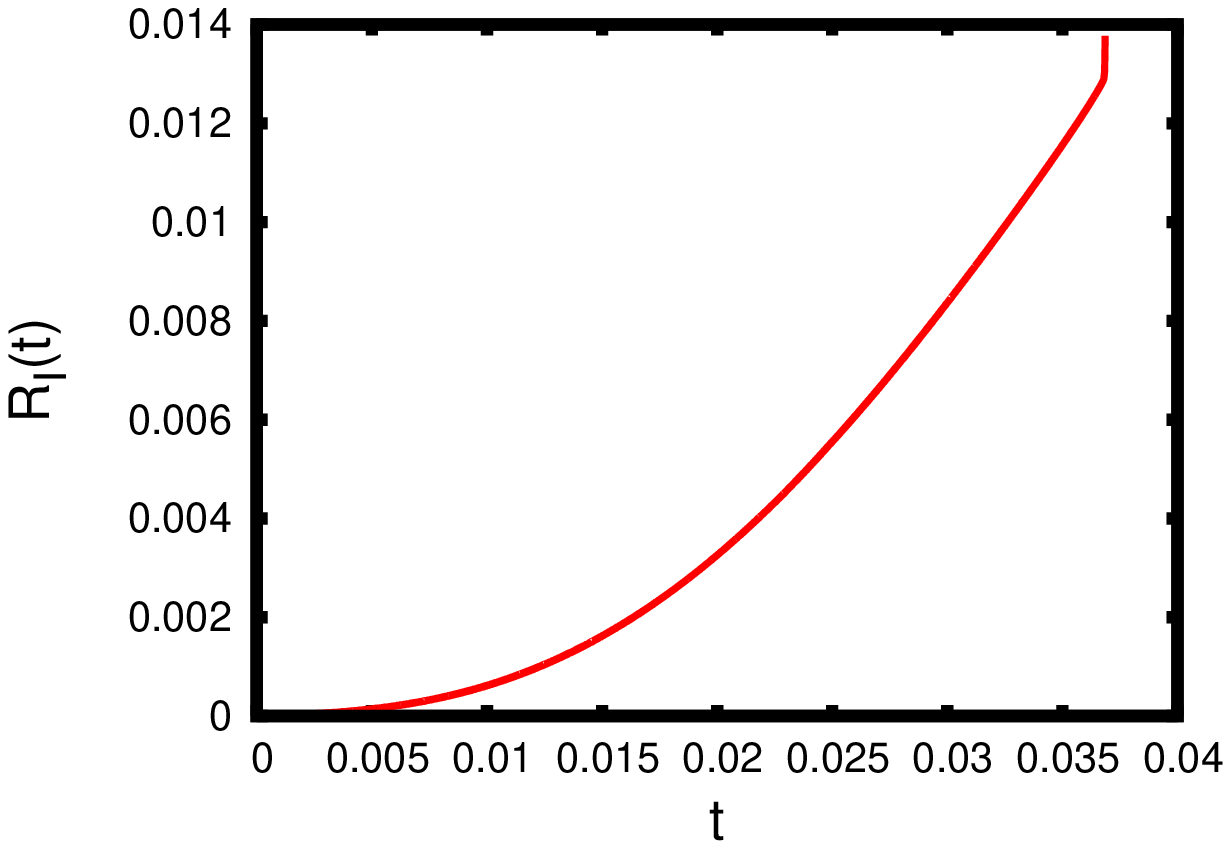}
\end{center}
\end{minipage}
\end{center}
\caption{
{\bf System \eqref{modelo} (two populations: excitatory and
inhibitory) presents blow-up, if there are no transmission delays.-}
We consider initial data \eqref{ci_maxwel} with
$v_0^E=v_0^I=1.25$ and $\sigma_0^E=\sigma_0^I=0.0003$,
the connectivity parameters 
$b_E^E=6$, $b_I^E=0.75$, $b_I^I=0.25$, $b_E^I=0.5$,
 and
with refractory states ($M_\alpha(t)=N_\alpha(t-\tau_\alpha)$)
where $\tau_\alpha=0.025$.
We observe that the initial data are not concentrated around
the threshold potential but the solution blows-up because $b_E^E=6$
is large enough 
and there are no transmission delays
(see Theorem \ref{th_blowup}).}
\label{blowup_EIR_bEE}
\end{figure}
\begin{figure}[H]
\begin{minipage}[c]{0.33\linewidth}
\begin{center}
\includegraphics[width=\textwidth]{./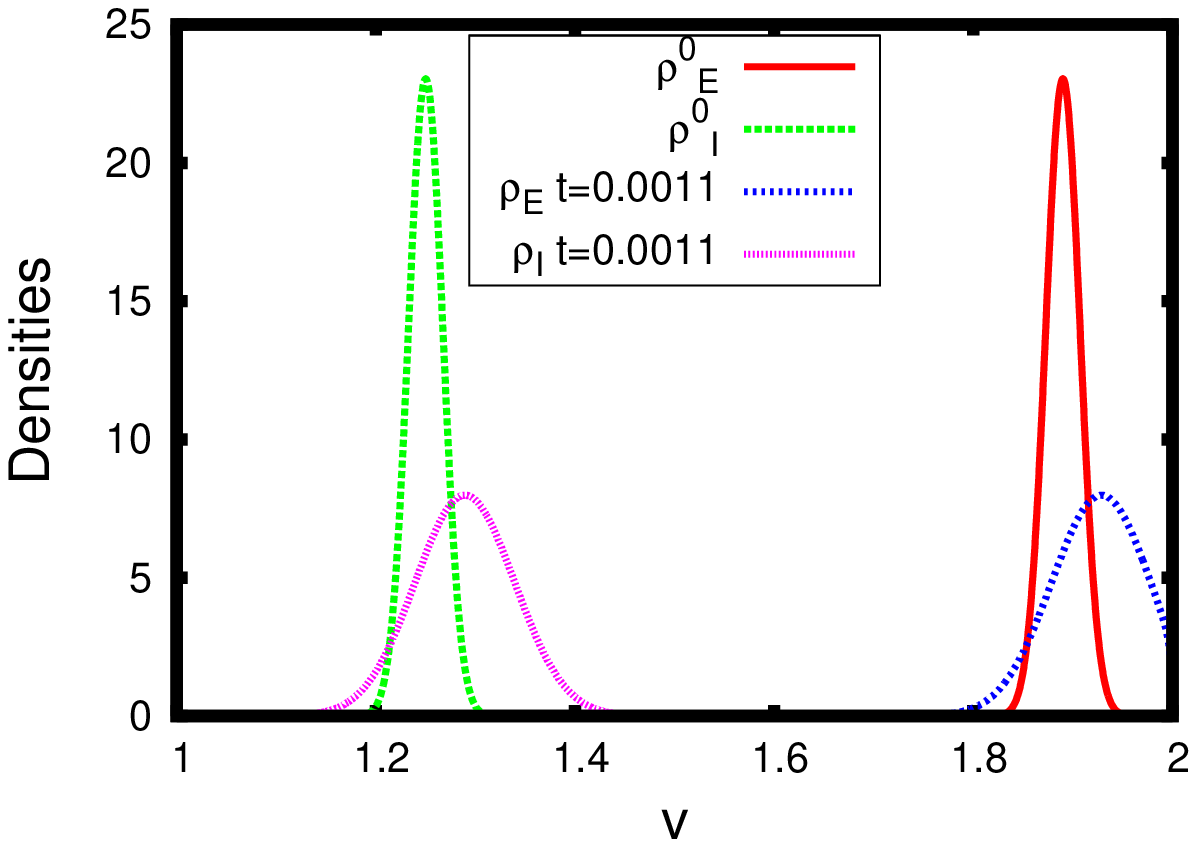}
\end{center}
\end{minipage}
\begin{minipage}[c]{0.33\linewidth}
\begin{center}
\includegraphics[width=\textwidth]{./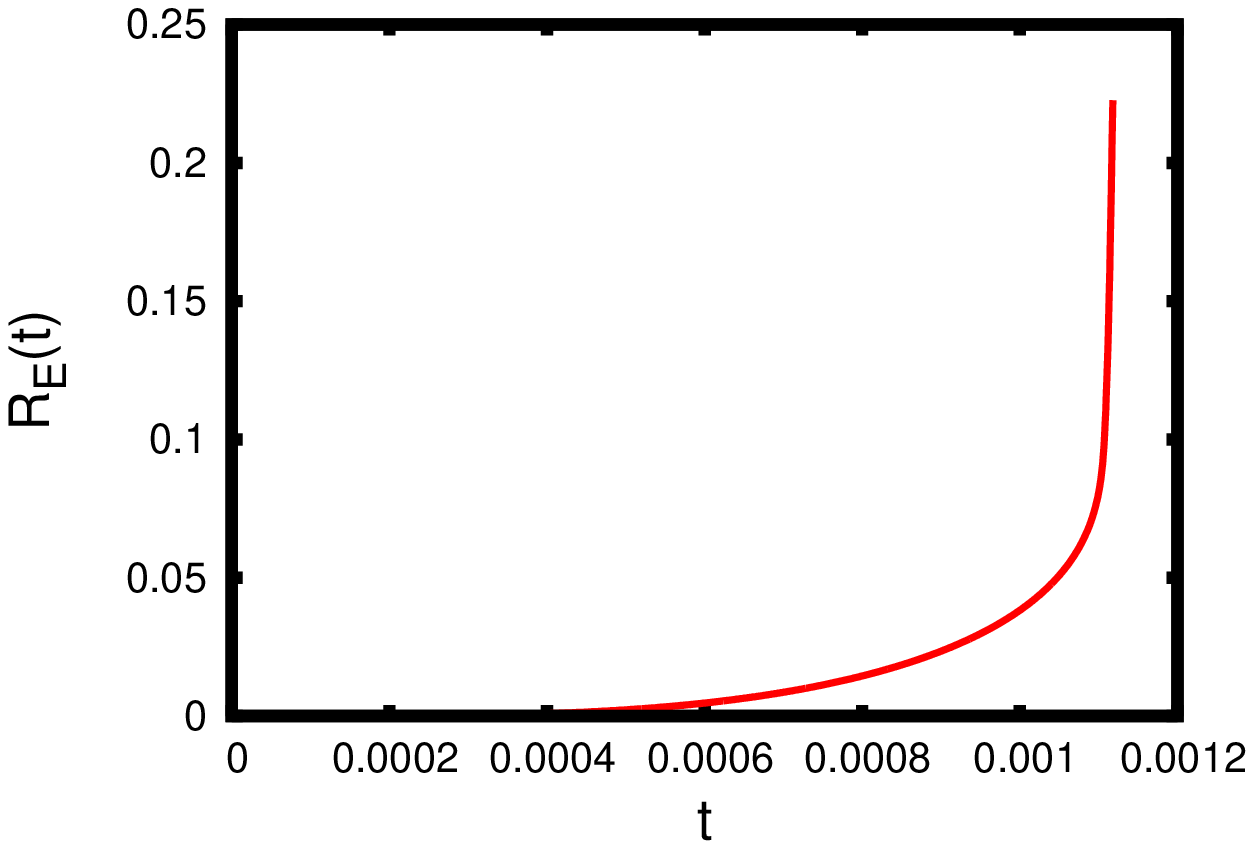}
\end{center}
\end{minipage}
\begin{minipage}[c]{0.33\linewidth}
\begin{center}
\includegraphics[width=\textwidth]{./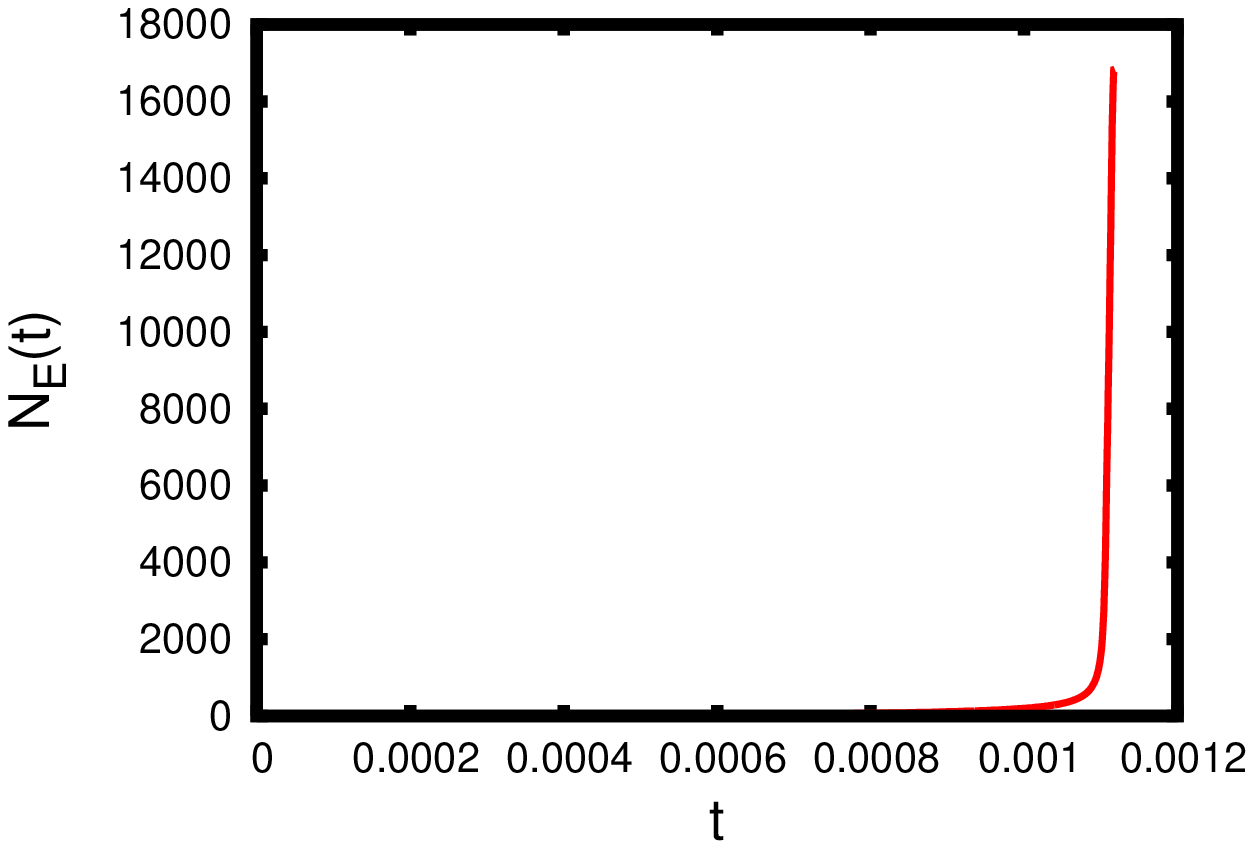}
\end{center}
\end{minipage}
\caption{
{\bf System \eqref{modelo} (two populations: excitatory and
inhibitory) presents blow-up, if there are no transmission delays.-}
We consider initial data \eqref{ci_maxwel} with
$v_0^E=1.89$,
$v_0^I=1.25$ and $\sigma_0^E=\sigma_0^I=0.0003$,
 the connectivity parameters 
 $b_E^E=0.5$, $b_I^E=0.75$, $b_I^I=0.25$, $b_E^I=0.5$,
 and
with refractory states ($M_\alpha(t)=N_\alpha(t-\tau_\alpha)$)
where $\tau=0.025$.
We observe that 
$b_E^E=0.5$ is not large enough,
but the solution blows-up because the
initial condition for the excitatory population
is concentrated around
the threshold potential
and there are no transmission delay
(see Theorem \ref{th_blowup}).
}
\label{blowup_EIR_ci}
\end{figure}
\begin{figure}[H]
\begin{minipage}[c]{0.33\linewidth}
\begin{center}
\includegraphics[width=\textwidth]{./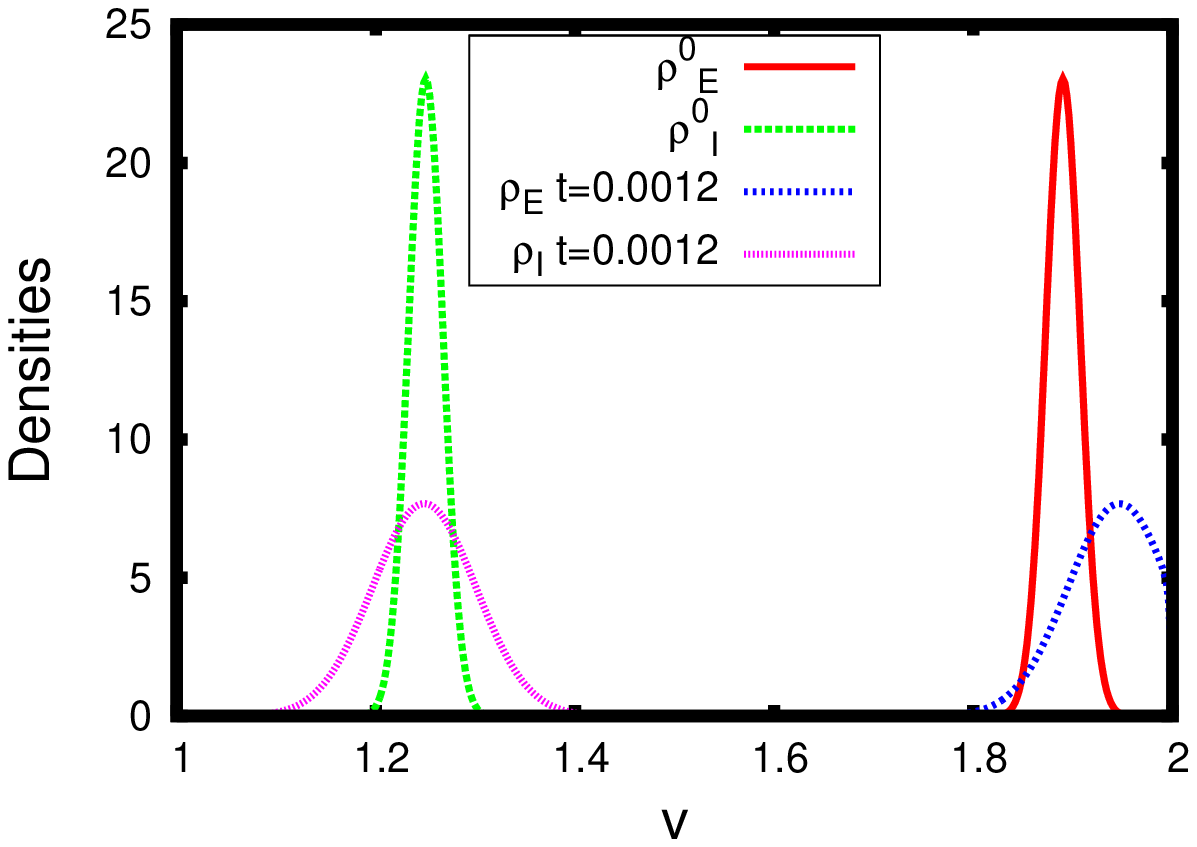}
\end{center}
\end{minipage}
\begin{minipage}[c]{0.33\linewidth}
\begin{center}
\includegraphics[width=\textwidth]{./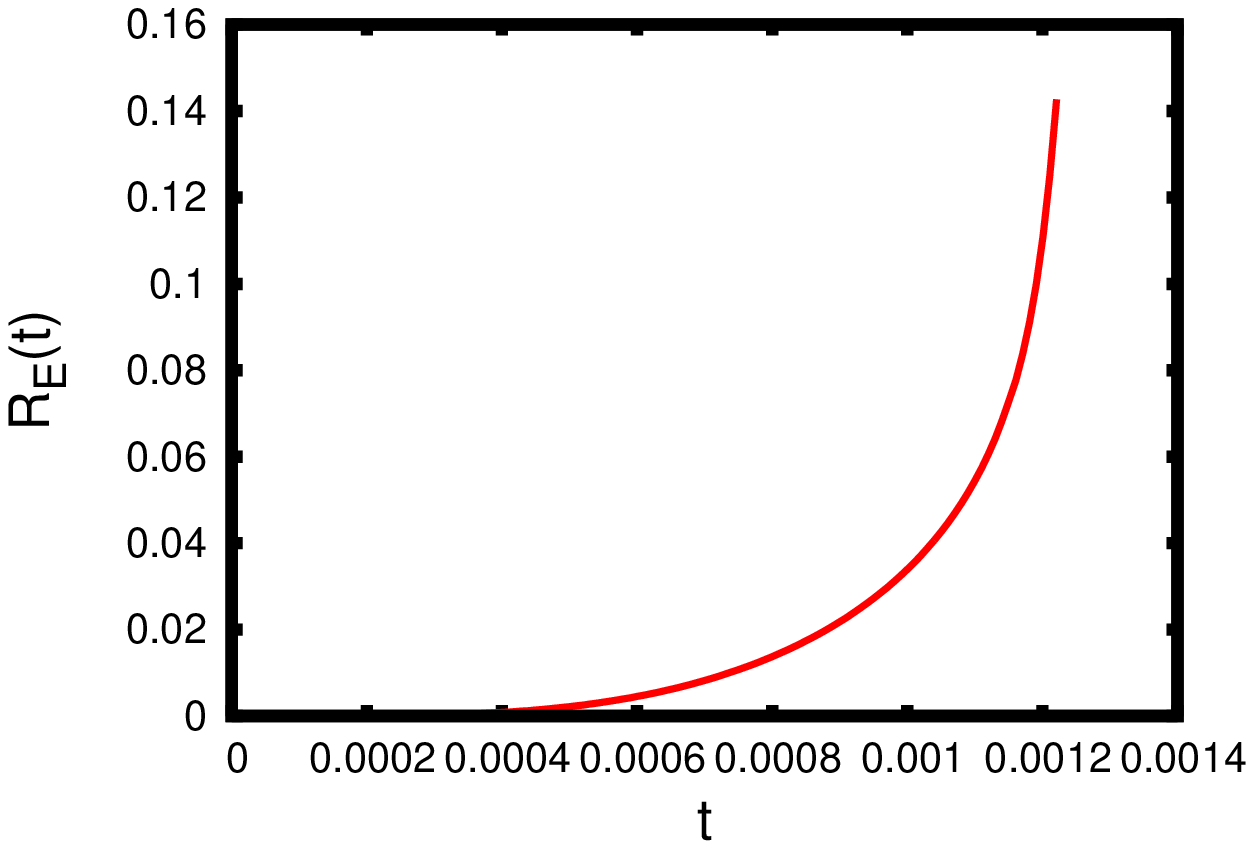}
\end{center}
\end{minipage}
\begin{minipage}[c]{0.33\linewidth}
\begin{center}
\includegraphics[width=\textwidth]{./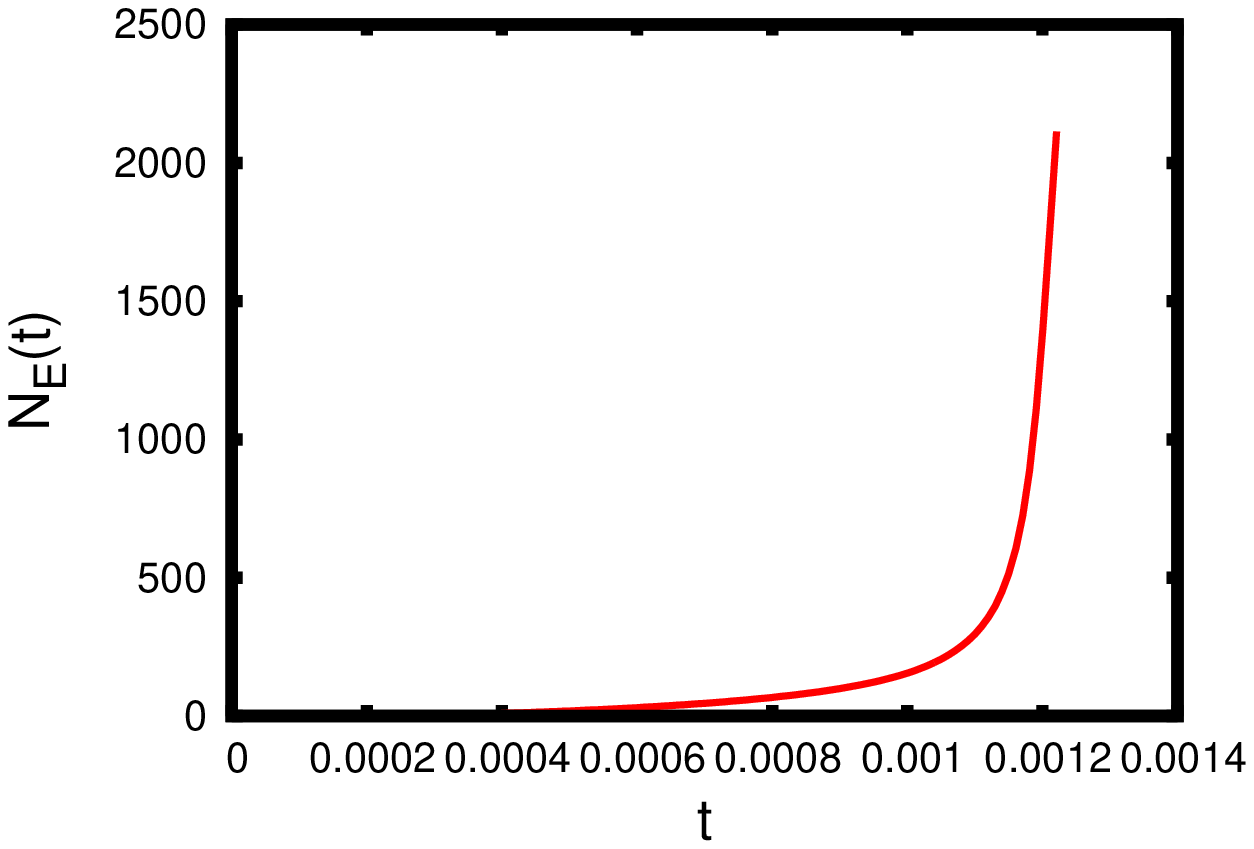}
\end{center}
\end{minipage}
\caption{
{\bf System \eqref{modelo} (two populations: excitatory and
inhibitory) presents blow-up, if there is no excitatory transmission delay.-}
We consider initial data \eqref{ci_maxwel} with
$v_0^E=1.89$,
$v_0^I=1.25$ and $\sigma_0^E=\sigma_0^I=0.0003$,
 the connectivity parameters 
$b_E^E=0.5$, $b_I^E=0.75$, $b_I^I=0.25$, $b_E^I=0.5$,
and
with refractory states ($M_\alpha(t)=N_\alpha(t-\tau_\alpha)$)
where $\tau_\alpha=0.025$. 
All the delays are 0.1, except $D_E^E=0$.
We observe that the other delays do not avoid the blow-up
due to a concentrated 
initial condition for the excitatory population.
}
\label{blowup_EIR_ci_delay}
\end{figure}
\begin{figure}[H]
\begin{center}
\begin{minipage}[c]{0.33\linewidth}
\begin{center}
\includegraphics[width=\textwidth]{./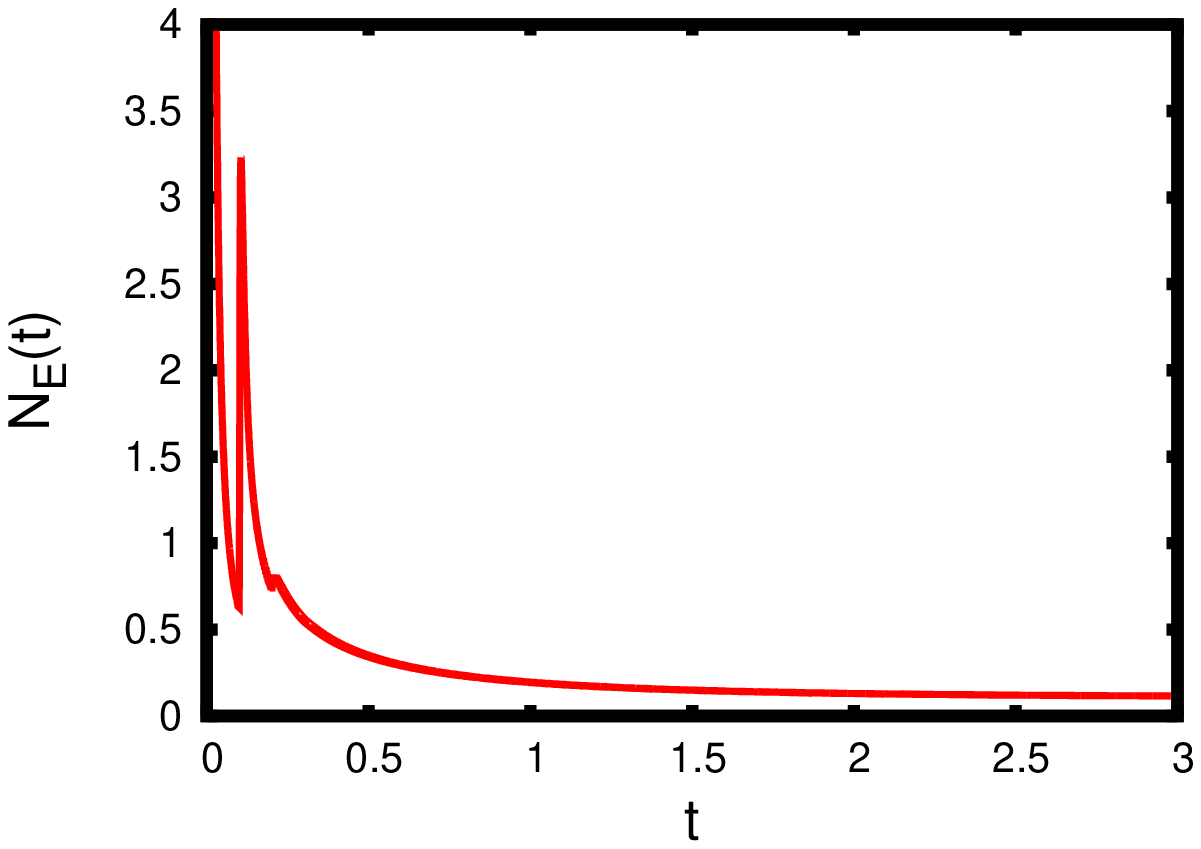}
\end{center}
\end{minipage}
\begin{minipage}[c]{0.33\linewidth}
\begin{center}
\includegraphics[width=\textwidth]{./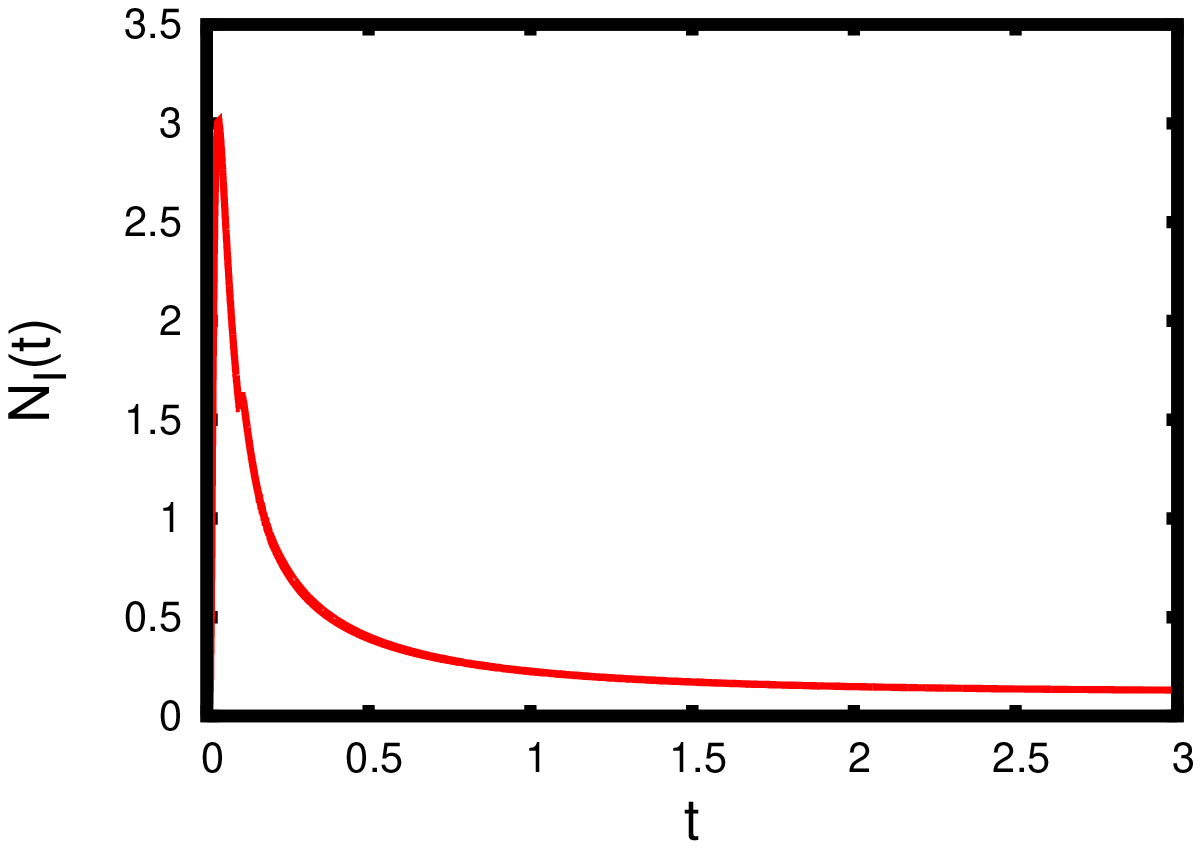}
\end{center}
\end{minipage}
\end{center}
\begin{center}
\begin{minipage}[c]{0.33\linewidth}
\begin{center}
\includegraphics[width=\textwidth]{./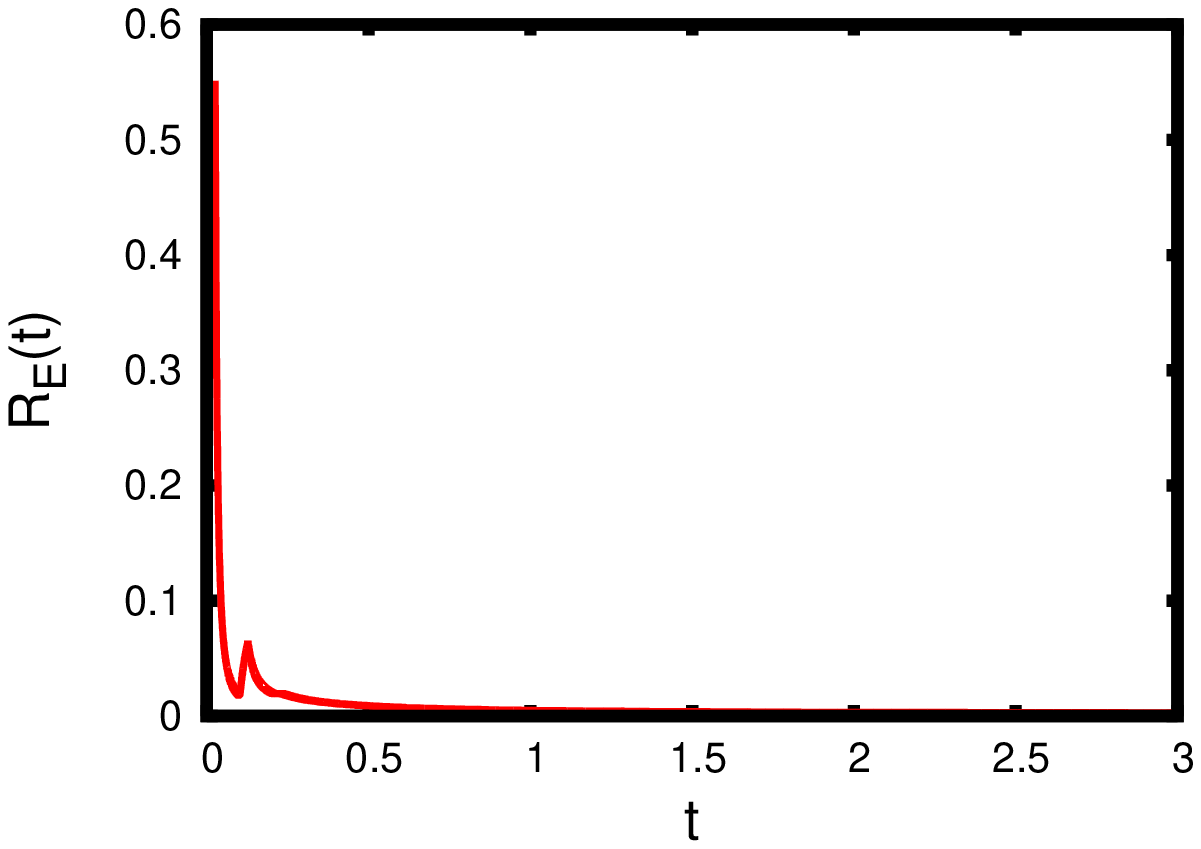}
\end{center}
\end{minipage}
\begin{minipage}[c]{0.33\linewidth}
\begin{center}
\includegraphics[width=\textwidth]{./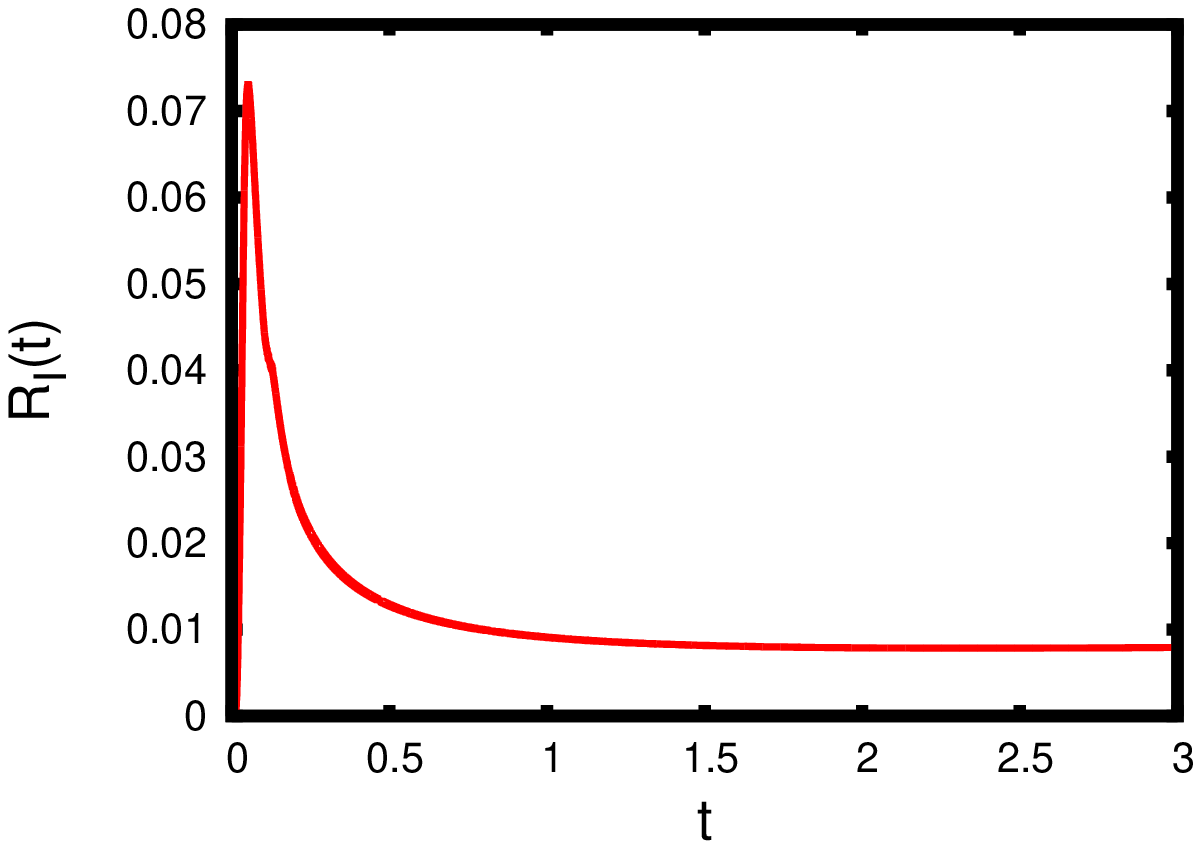}
\end{center}
\end{minipage}
\end{center}
\begin{center}
\begin{minipage}[c]{0.33\linewidth}
\begin{center}
\includegraphics[width=\textwidth]{./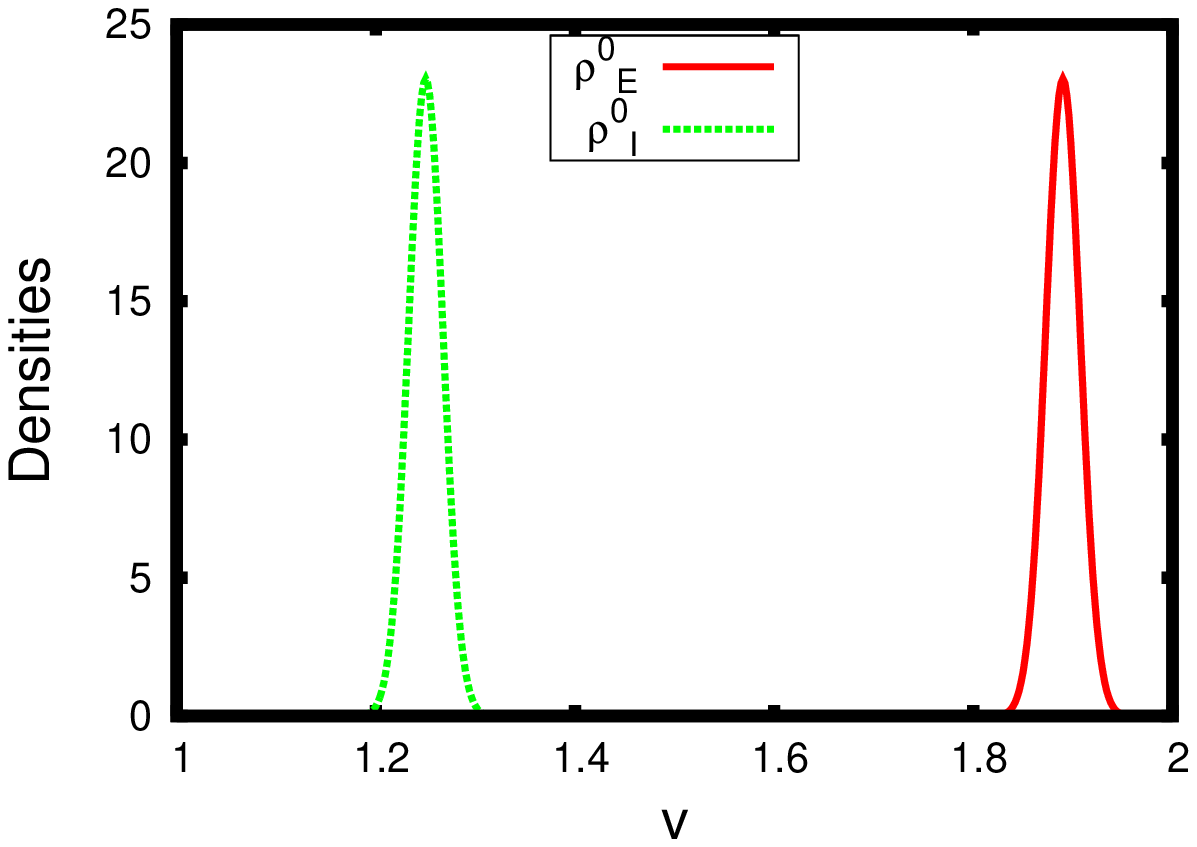}
\end{center}
\end{minipage}
\begin{minipage}[c]{0.33\linewidth}
\begin{center}
\includegraphics[width=\textwidth]{./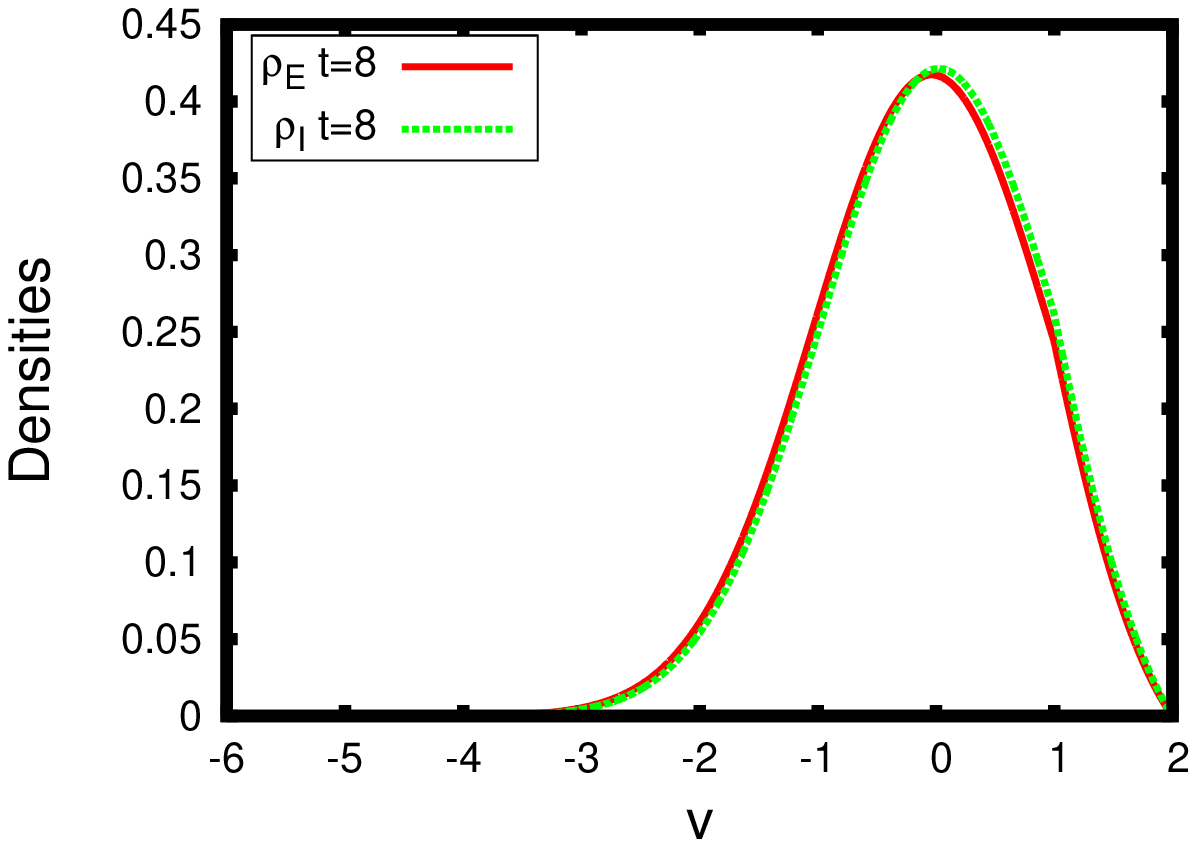}
\end{center}
\end{minipage}
\end{center}
\caption{
{\bf System \eqref{modelo} (two populations: excitatory and
inhibitory) avoids blow-up, if there is a transmission delay between
excitatory neurons.-}
We consider initial data \eqref{ci_maxwel} with
$v_0^E=1.89$,
$v_0^I=1.25$ and $\sigma_0^E=\sigma_0^I=0.0003$,
 the connectivity parameters 
 $b_E^E=0.5$, $b_I^E=0.75$, $b_I^I=0.25$, $b_E^I=0.5$,
$D_E^I=D_I^E=D_I^I=0$,
 and
with refractory states ($M_\alpha(t)=N_\alpha(t-\tau_\alpha)$)
where $\tau=0.025$.
We observe that if there is a transmission delay between excitatory
neurons 
$D_E^E=0.1$, the blow-up phenomenon
is avoided.
Top: Firing rates. Middle: Refractory states. Bottom: Probability
densities.}
\label{noblowup_EIRD_ci}
\end{figure}
\begin{figure}[H]
\begin{center}
\begin{minipage}[c]{0.33\linewidth}
\begin{center}
\includegraphics[width=\textwidth]{./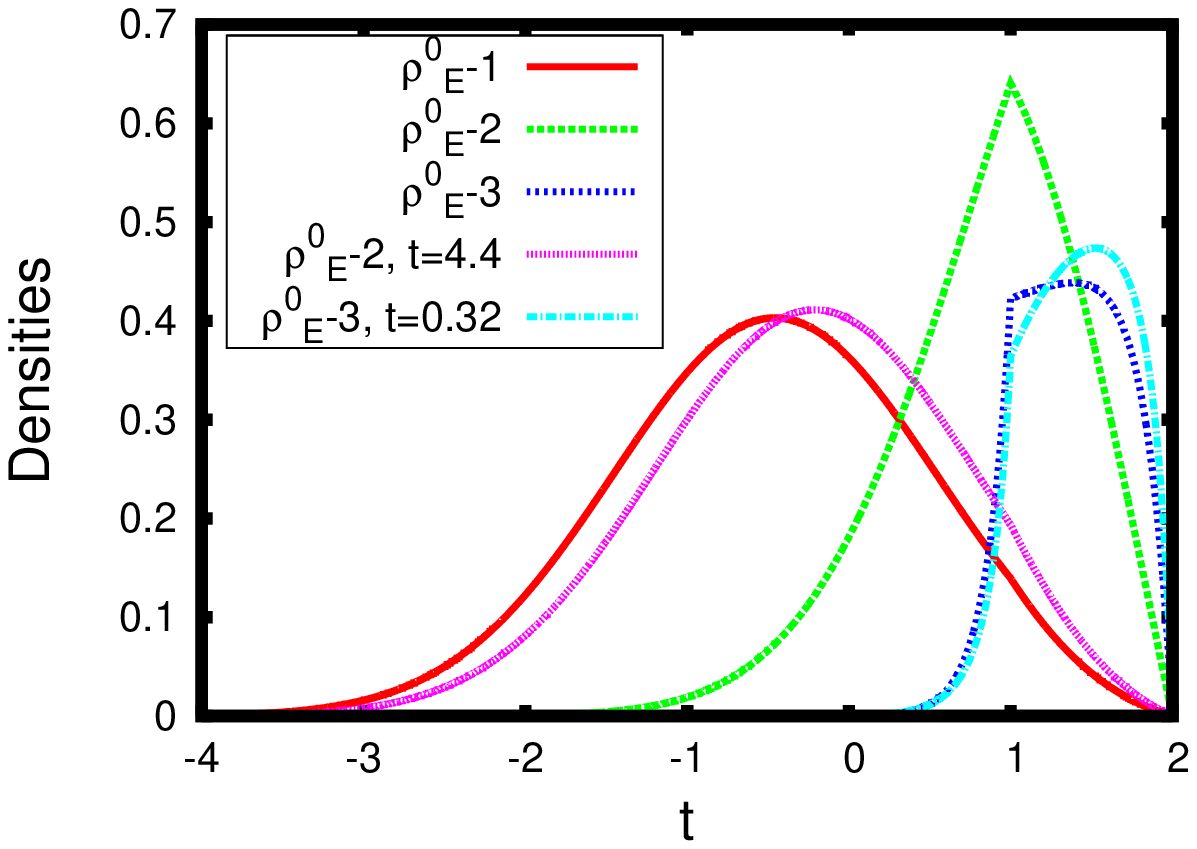}
\end{center}
\end{minipage}
\begin{minipage}[c]{0.33\linewidth}
\begin{center}
\includegraphics[width=\textwidth]{./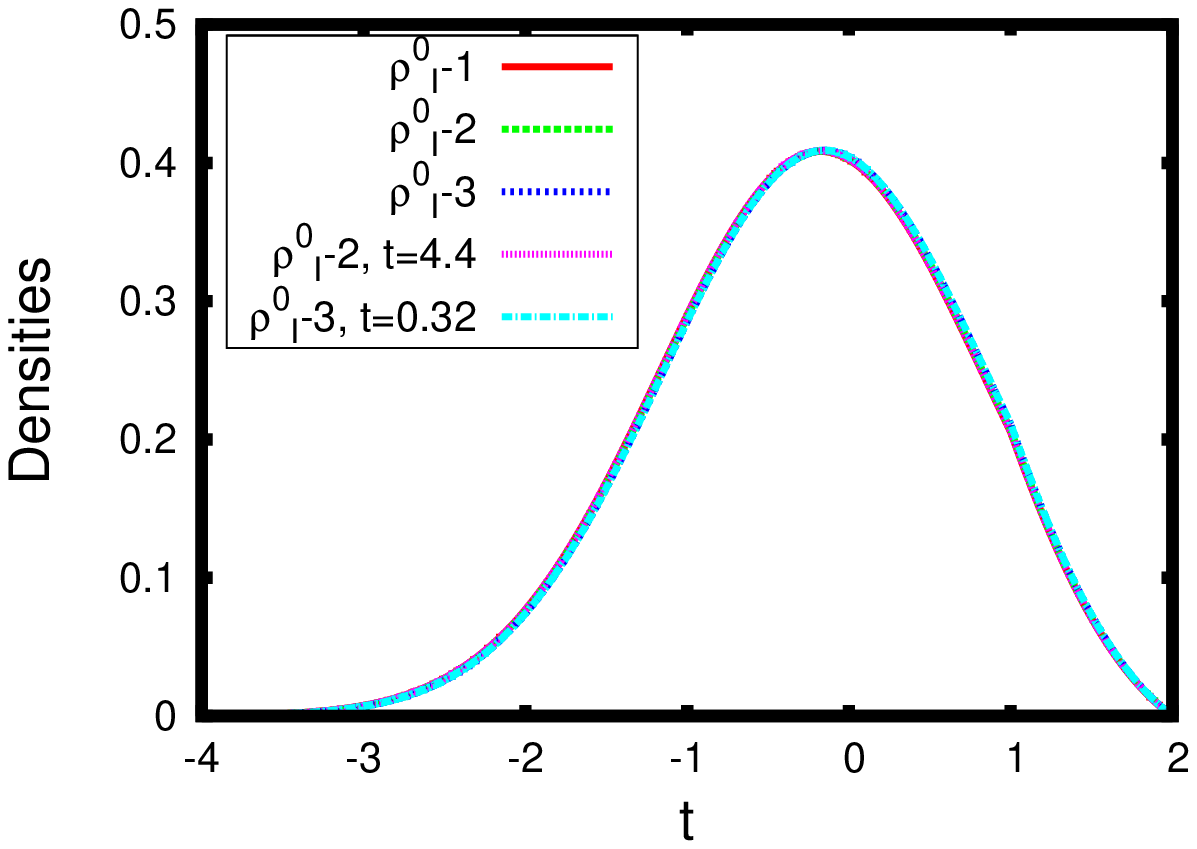}
\end{center}
\end{minipage}
\end{center}
\begin{center}
\begin{minipage}[c]{0.33\linewidth}
\begin{center}
\includegraphics[width=\textwidth]{./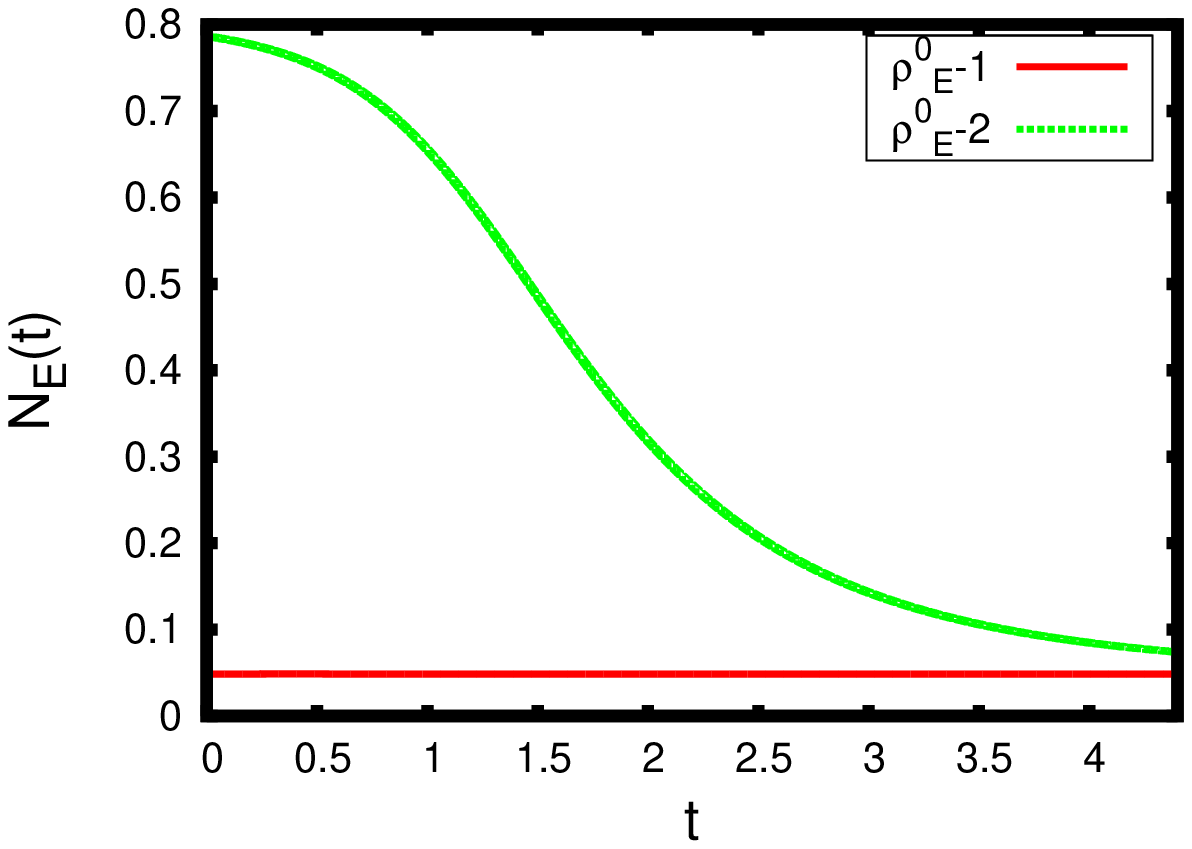}
\end{center}
\end{minipage}
\begin{minipage}[c]{0.33\linewidth}
\begin{center}
\includegraphics[width=\textwidth]{./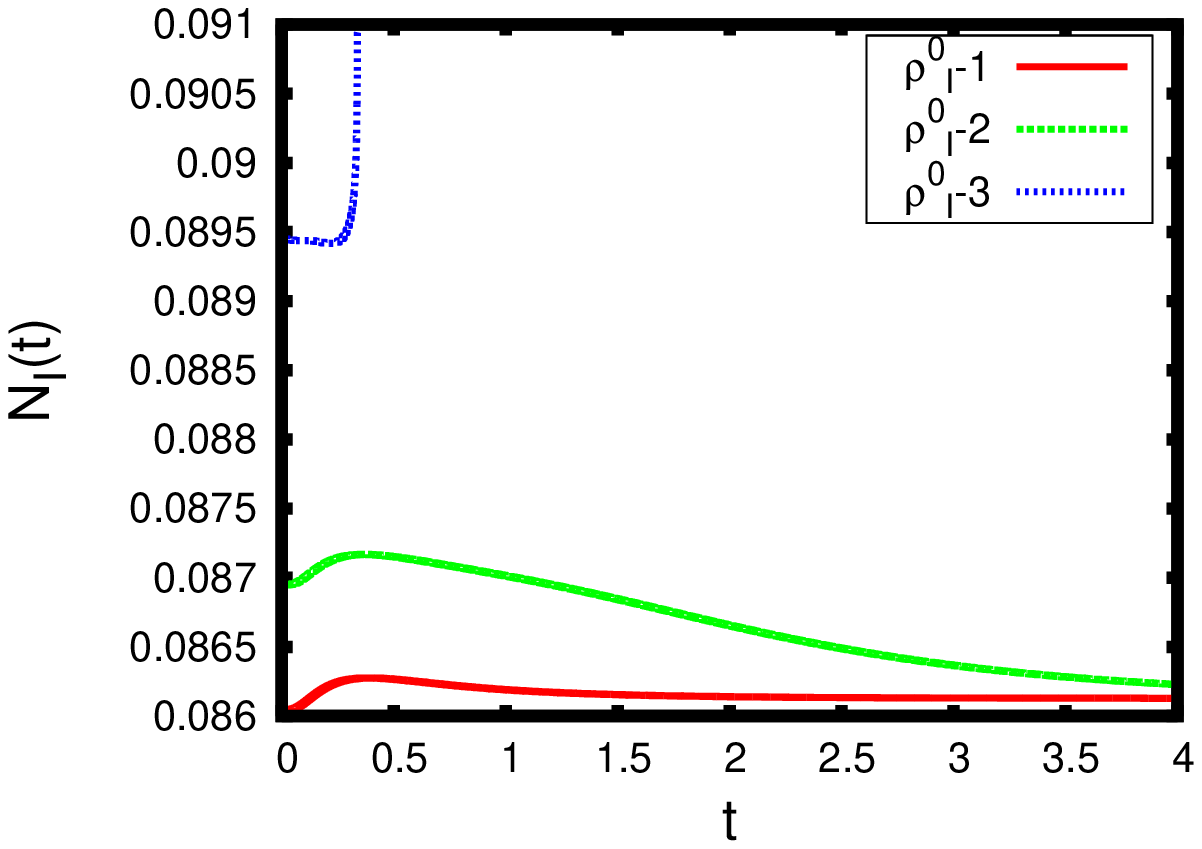}
\end{center}
\end{minipage}
\end{center}
\begin{center}
\begin{minipage}[c]{0.33\linewidth}
\begin{center}
\includegraphics[width=\textwidth]{./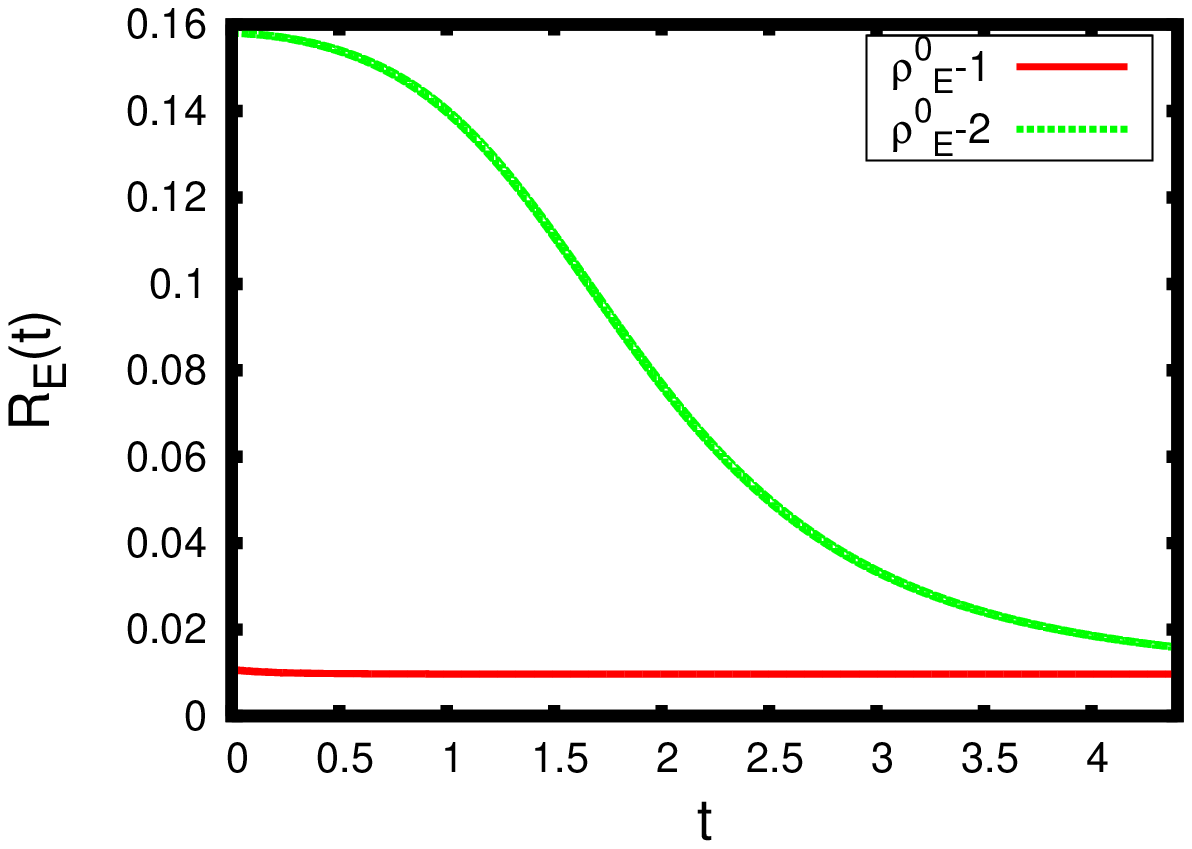}
\end{center}
\end{minipage}
\begin{minipage}[c]{0.33\linewidth}
\begin{center}
\includegraphics[width=\textwidth]{./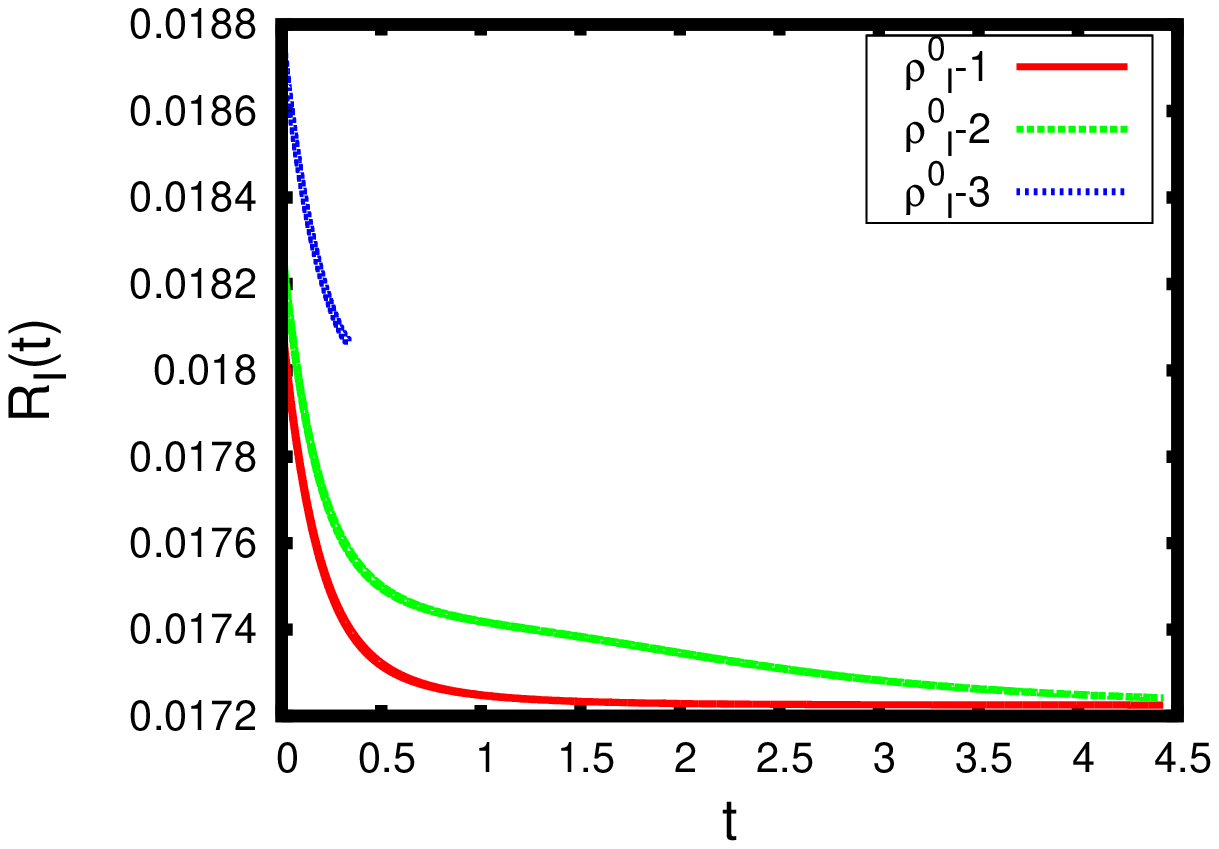}
\end{center}
\end{minipage}
\end{center}
\caption{
{\bf Numerical analysis of the stability in the case of three steady
states for the system \eqref{modelo}.-}
If $b_I^E=7$, $b_I^I=2$, $b_E^I=0.01$, $\tau_E=\tau_I=0.2$
and $b_E^E=3$, there are three steady states (see Fig. \ref{bif_EI}) . 
Top: Initial conditions $\rho_\alpha^0-1,2,3$ given by
the profile \eqref{soleq}, where $N_\alpha$ are approximations of the
stationary firing rates, and evolution of densities 2 and 3 after some time.
Middle: Evolution of the excitatory firing rates and the refractory
states.
Bottom: Evolution of the inhibitory firing rates and the refractory
states.
\newline
We observe that the lowest steady state is stable and the other two
are unstable. 
}
\label{est_eq_EI1}
\end{figure}
\begin{figure}[H]
\begin{center}
\begin{minipage}[c]{0.33\linewidth}
\begin{center}
\includegraphics[width=\textwidth]{./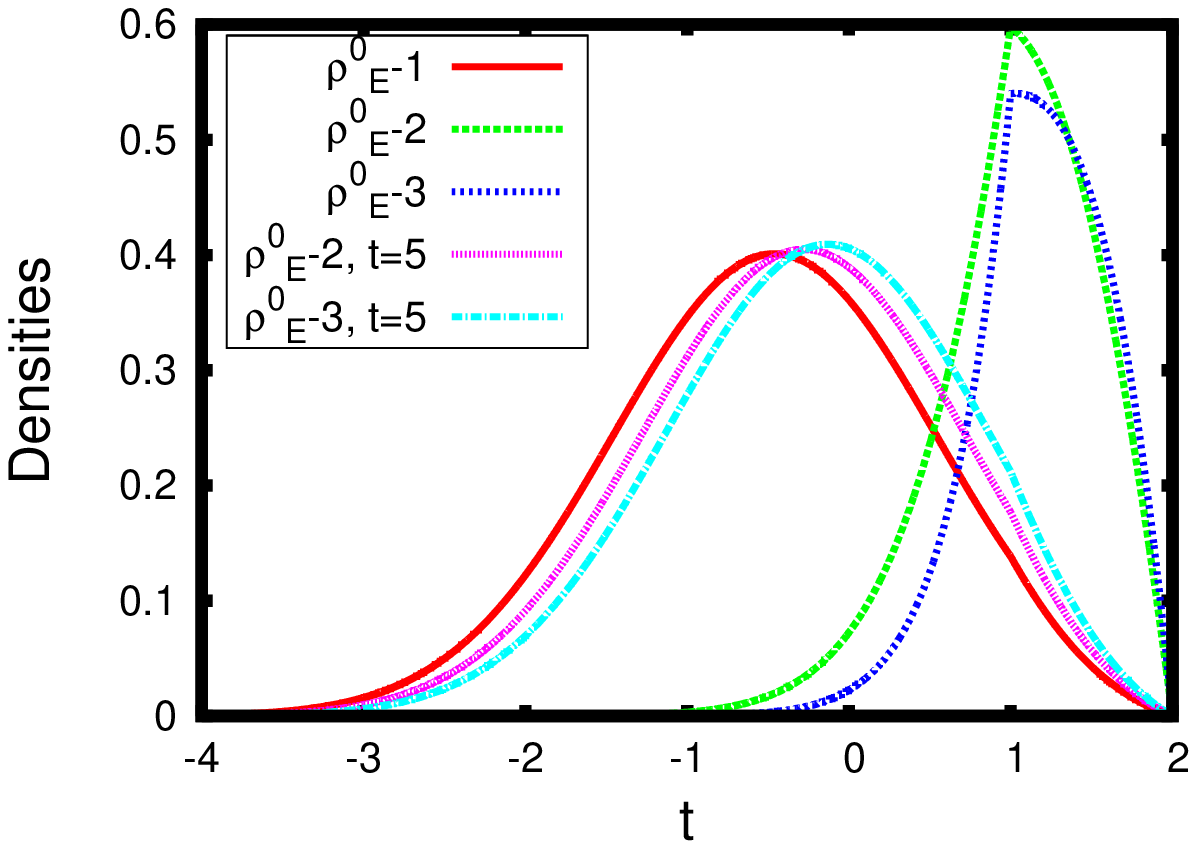}
\end{center}
\end{minipage}
\begin{minipage}[c]{0.33\linewidth}
\begin{center}
\includegraphics[width=\textwidth]{./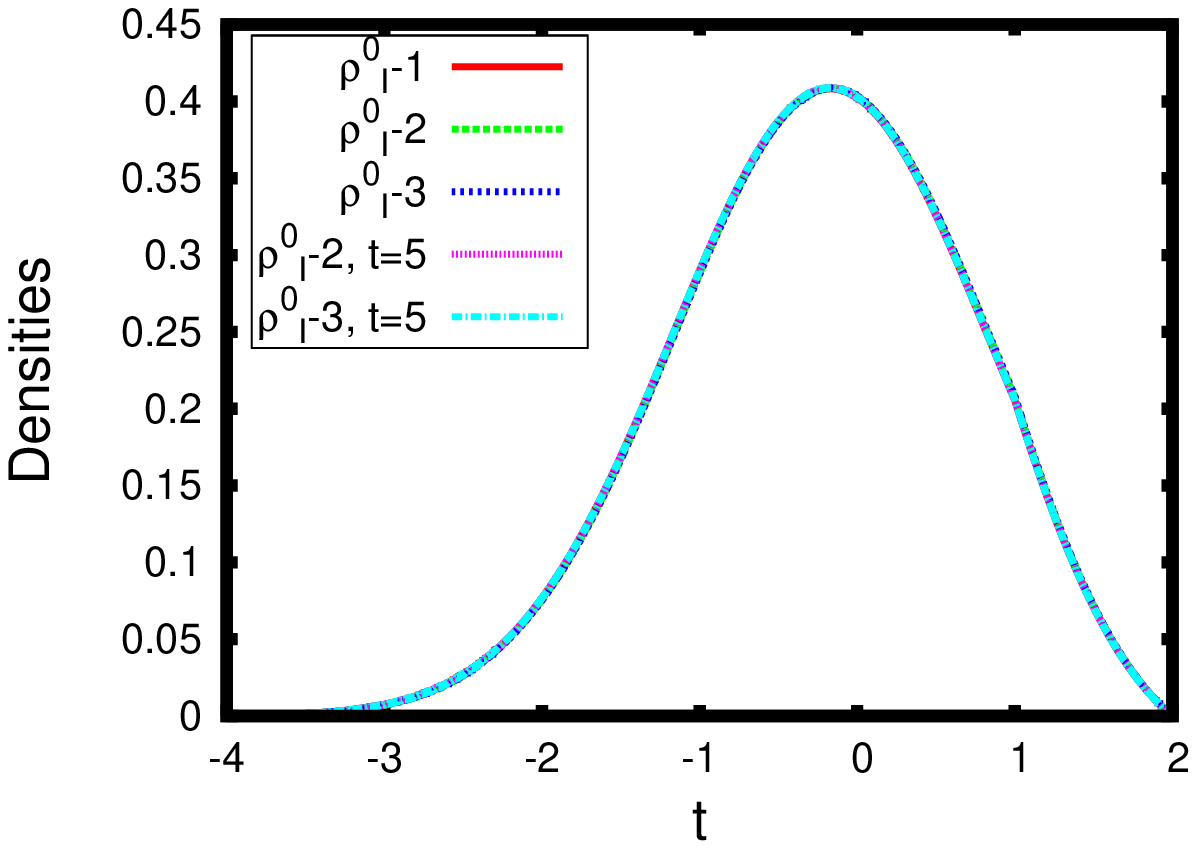}
\end{center}
\end{minipage}

\end{center}
\begin{center}
\begin{minipage}[c]{0.33\linewidth}
\begin{center}
\includegraphics[width=\textwidth]{./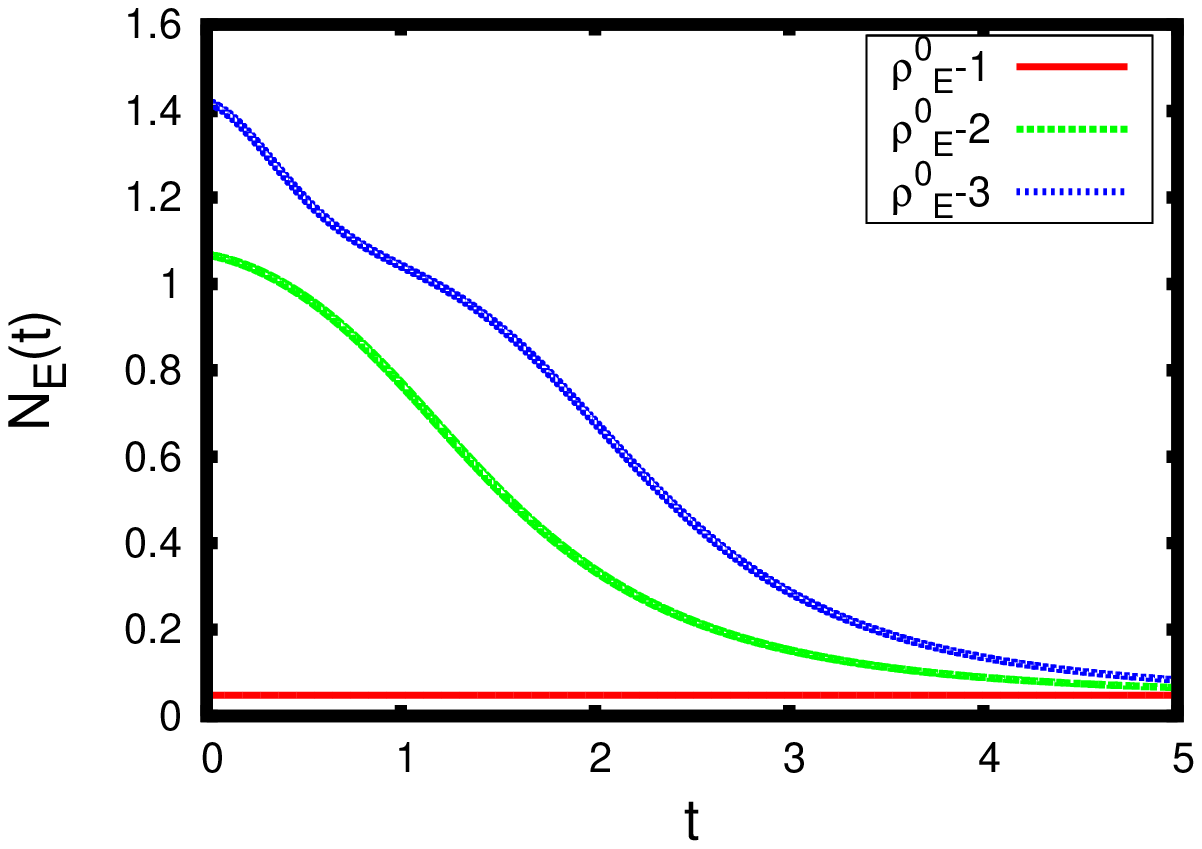}
\end{center}
\end{minipage}
\begin{minipage}[c]{0.33\linewidth}
\begin{center}
\includegraphics[width=\textwidth]{./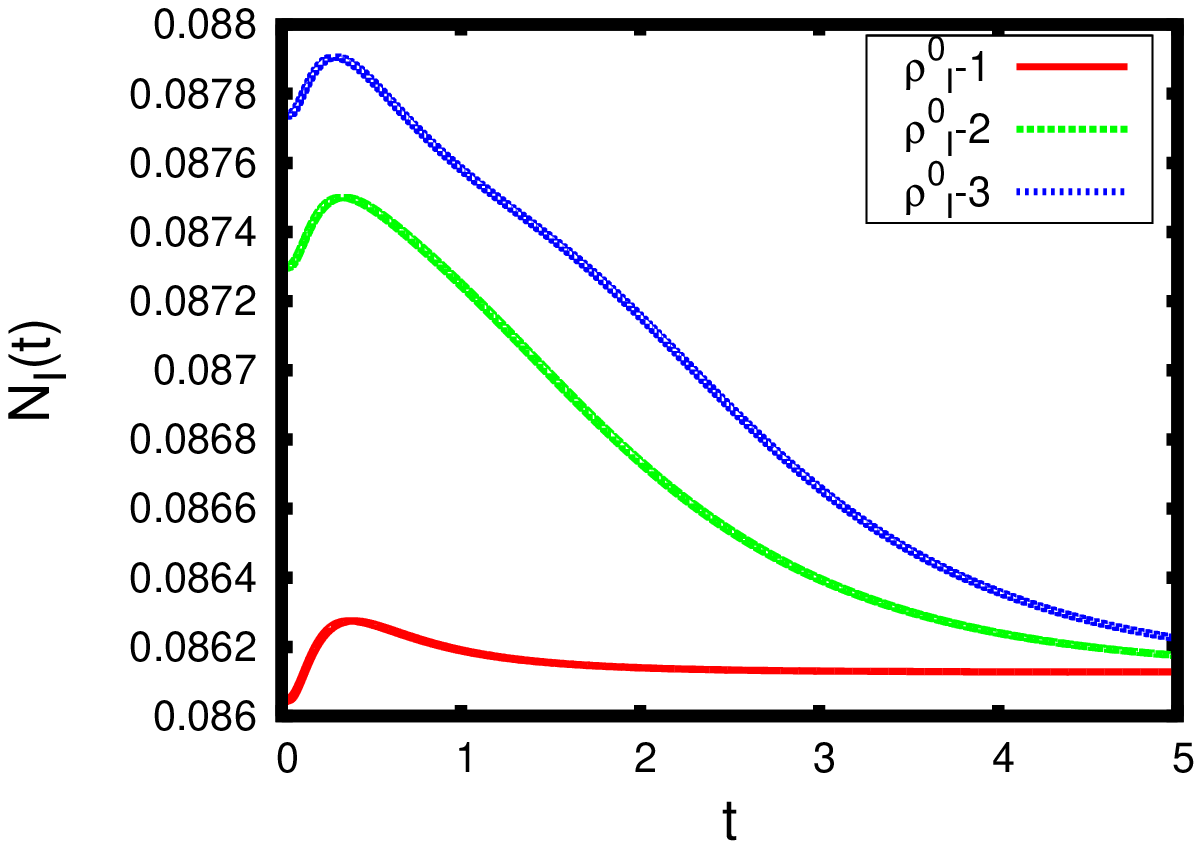}
\end{center}
\end{minipage}
\end{center}
\begin{center}
\begin{minipage}[c]{0.33\linewidth}
\begin{center}
\includegraphics[width=\textwidth]{./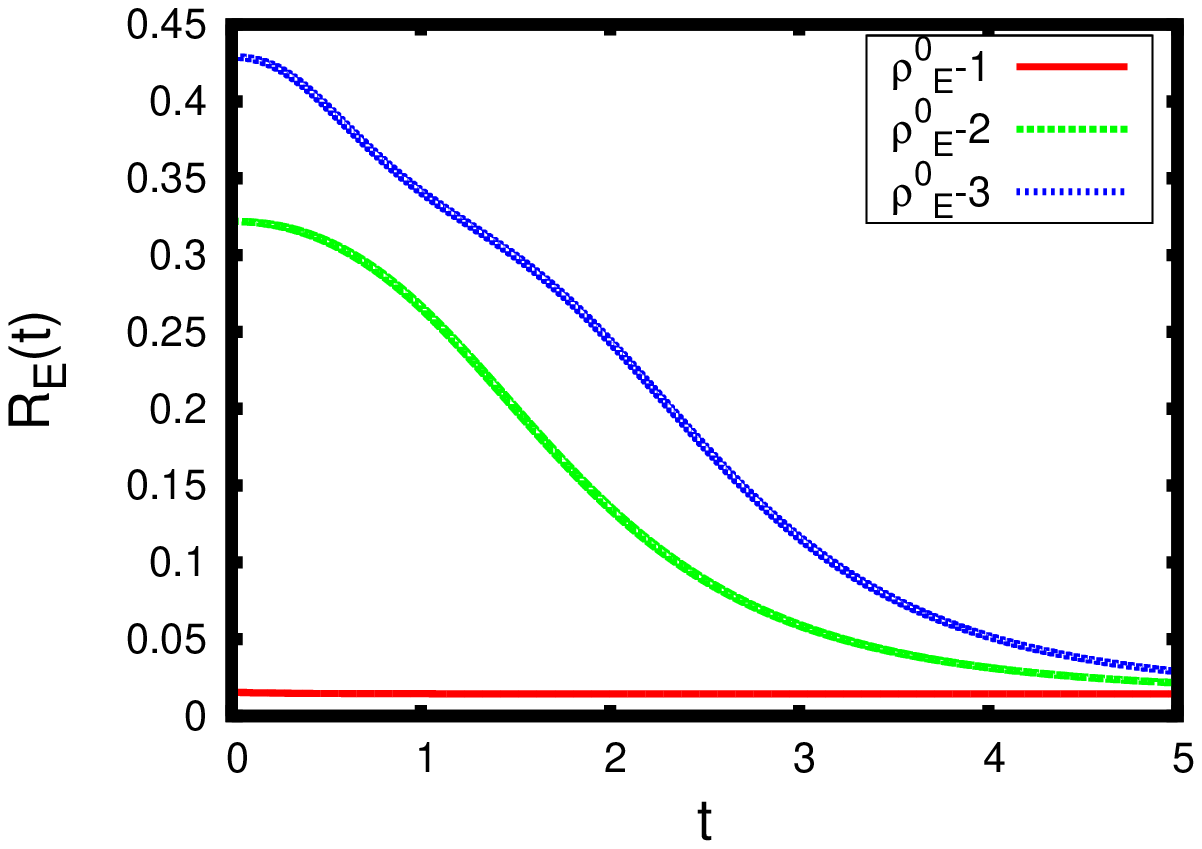}
\end{center}
\end{minipage}
\begin{minipage}[c]{0.33\linewidth}
\begin{center}
\includegraphics[width=\textwidth]{./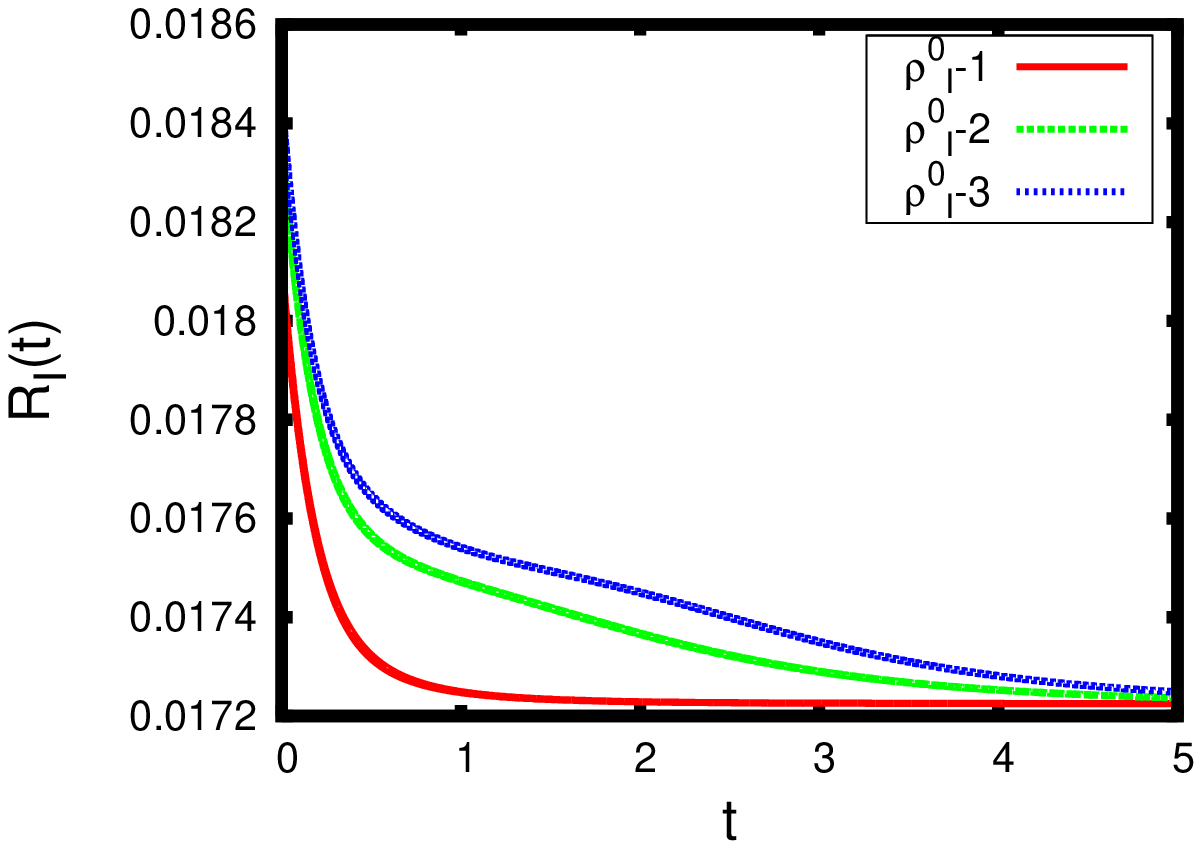}
\end{center}
\end{minipage}
\end{center}
\caption{
{\bf Numerical analysis of the stability in the case of three steady
states for the system \eqref{modelo}.-}
If $b_I^E=7$, $b_I^I=2$, $b_E^I=0.01$, $\tau_E=0.3$, $\tau_I=0.2$
and $b_E^E=3$, there are three steady states (see Fig. \ref{bif_EI}) . 
Top: Initial conditions $\rho_\alpha^0-1,2,3$ given by
the profile \eqref{soleq}, where $N_\alpha$ are approximations of the
stationary firing rates, and evolution of densities 2 and 3 after some time.
Middle: Evolution of the excitatory firing rates and the refractory
states.
Bottom: Evolution of the inhibitory firing rates and the refractory
states.
\newline
We observe that the lowest steady state is stable and the other two
are unstable. 
}
\label{est_eq_EI2}
\end{figure}
\begin{figure}[H]
\begin{center}
\begin{minipage}[c]{0.33\linewidth}
\begin{center}
\includegraphics[width=\textwidth]{./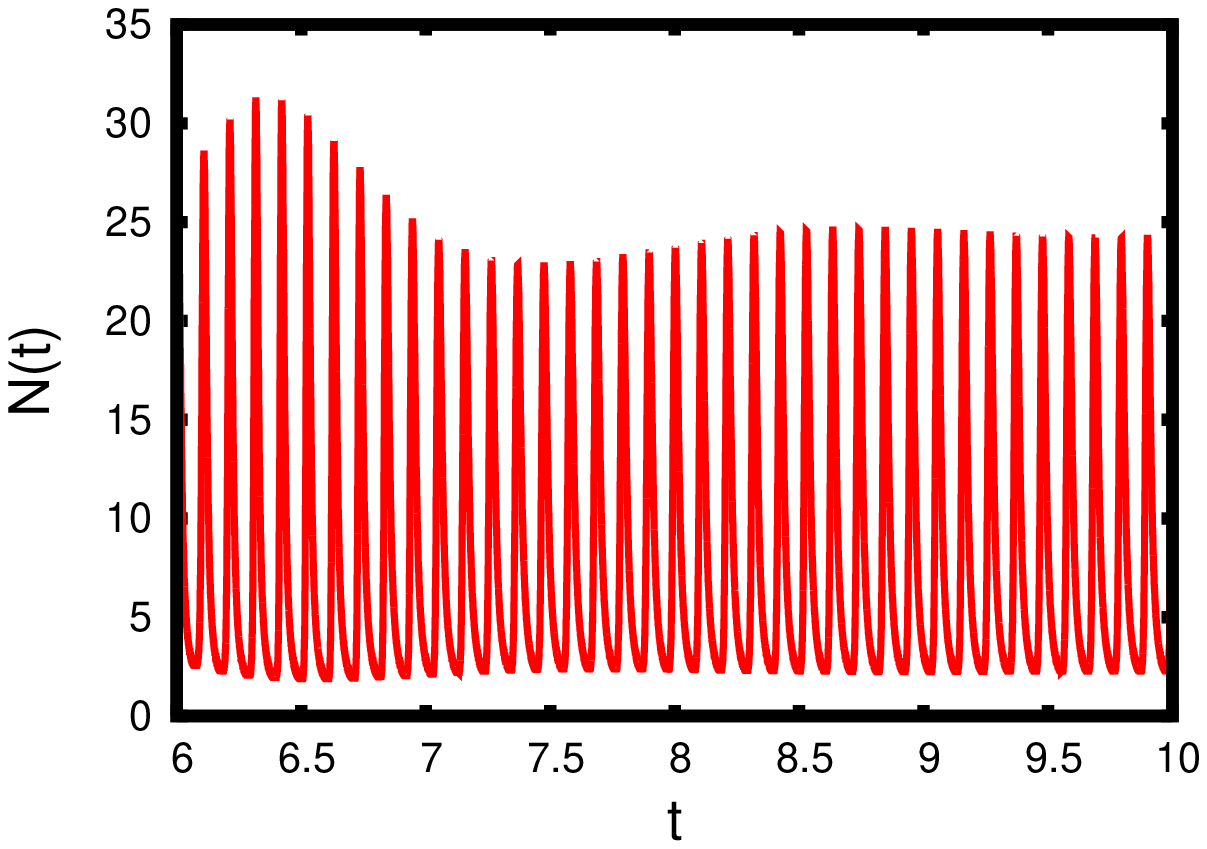}
\end{center}
\end{minipage}
\begin{minipage}[c]{0.33\linewidth}
\begin{center}
\includegraphics[width=\textwidth]{./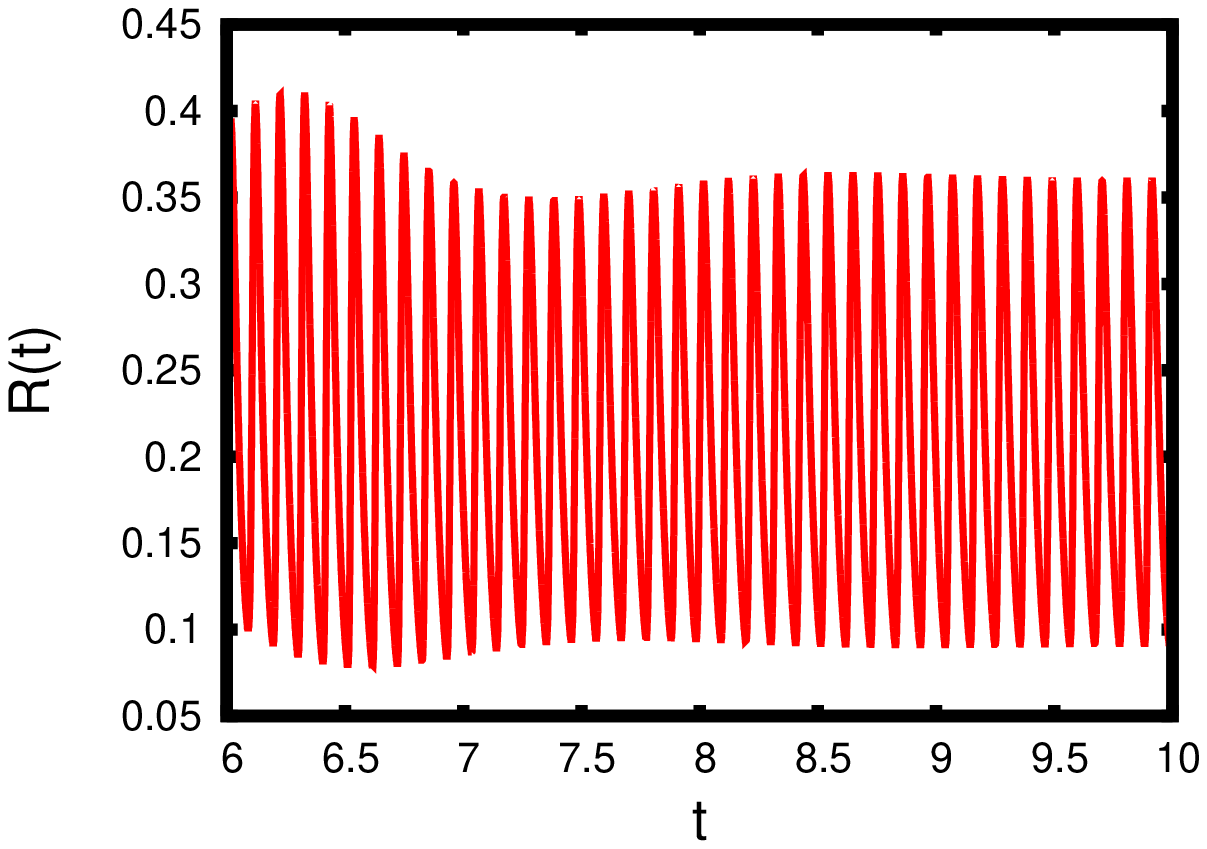}
\end{center}
\end{minipage}
\end{center}
\begin{center}
\begin{minipage}[c]{0.33\linewidth}
\begin{center}
\includegraphics[width=\textwidth]{./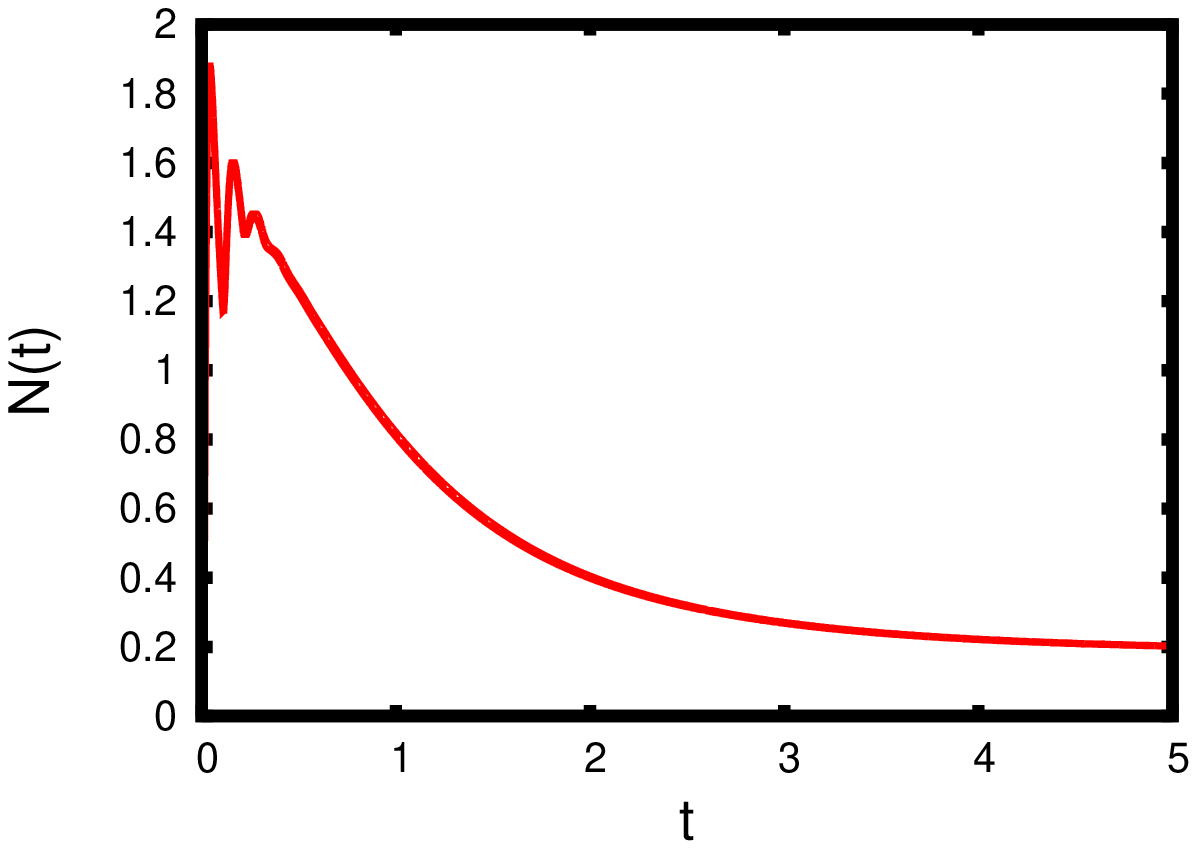}
\end{center}
\end{minipage}
\begin{minipage}[c]{0.33\linewidth}
\begin{center}
\includegraphics[width=\textwidth]{./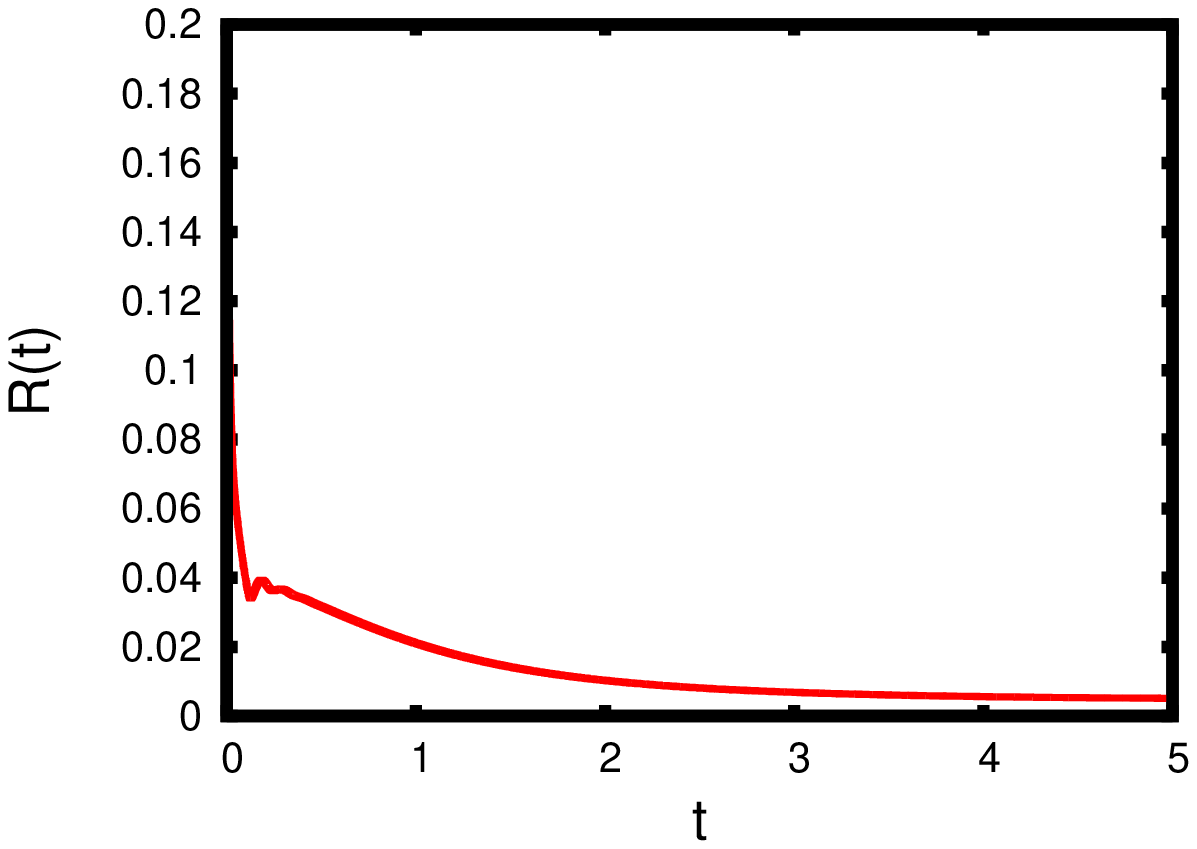}
\end{center}
\end{minipage}
\end{center}
\caption{{\bf System \eqref{nota} (only one average-excitatory
population) presents periodic 
solutions, if there is a transmission delay.-}
We consider initial data \eqref{ci_maxwel} with
 $\sigma_0=0.0003$,  the connectivity parameter  $b=1.5$,
the transmission delay  $D=0.1$,  $v_{ext}=0$ and with refractory 
states ($M(t)=\frac{R(t)}{\tau}$), where $\tau=0.025$ and $R(0)=0.2$.
\newline
Periodic solutions appear if the initial condition is
concentrated enough  around the
threshold potential
Top: $v_0=1.83$. Botton: $v_0=1.5$.}
\label{osci_R_MJ-1}
\end{figure}
\begin{figure}[H]
\begin{center}
\begin{minipage}[c]{0.33\linewidth}
\begin{center}
\includegraphics[width=\textwidth]{./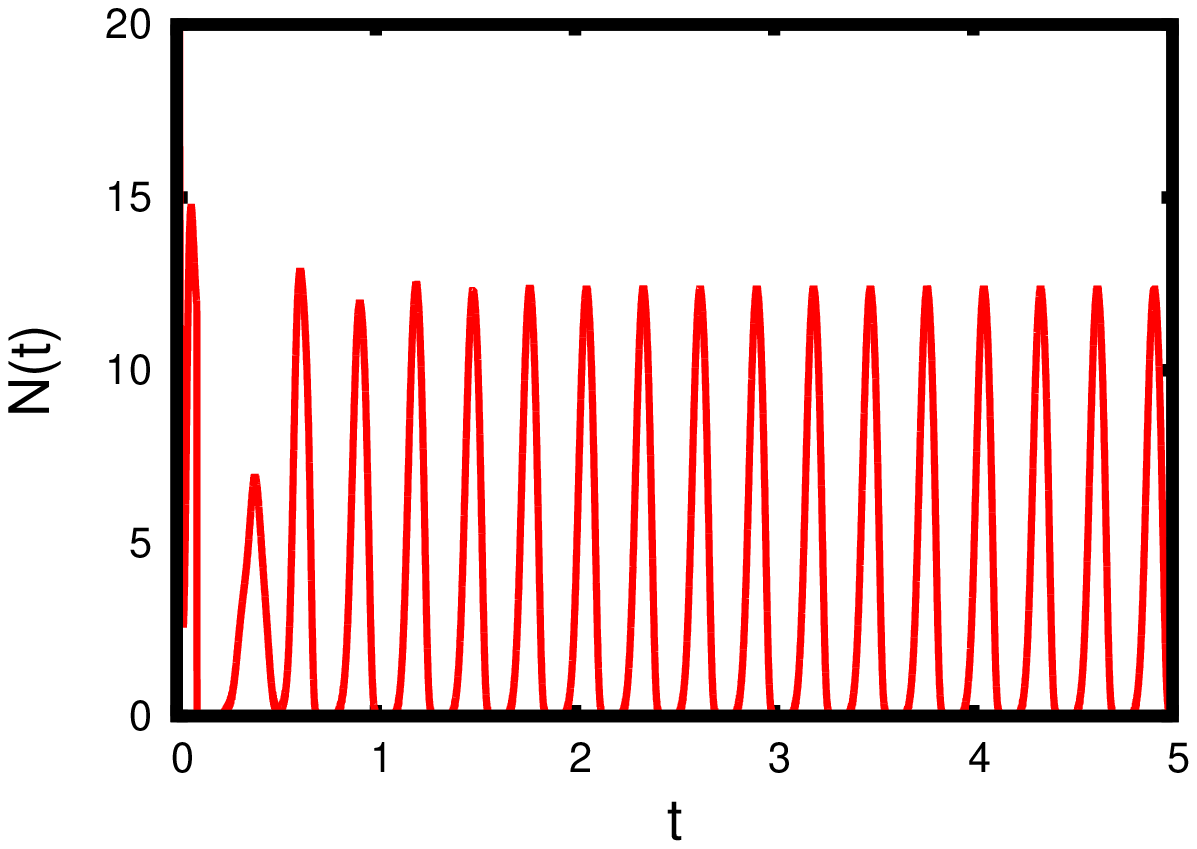}
\end{center}
\end{minipage}
\begin{minipage}[c]{0.33\linewidth}
\begin{center}
\includegraphics[width=\textwidth]{./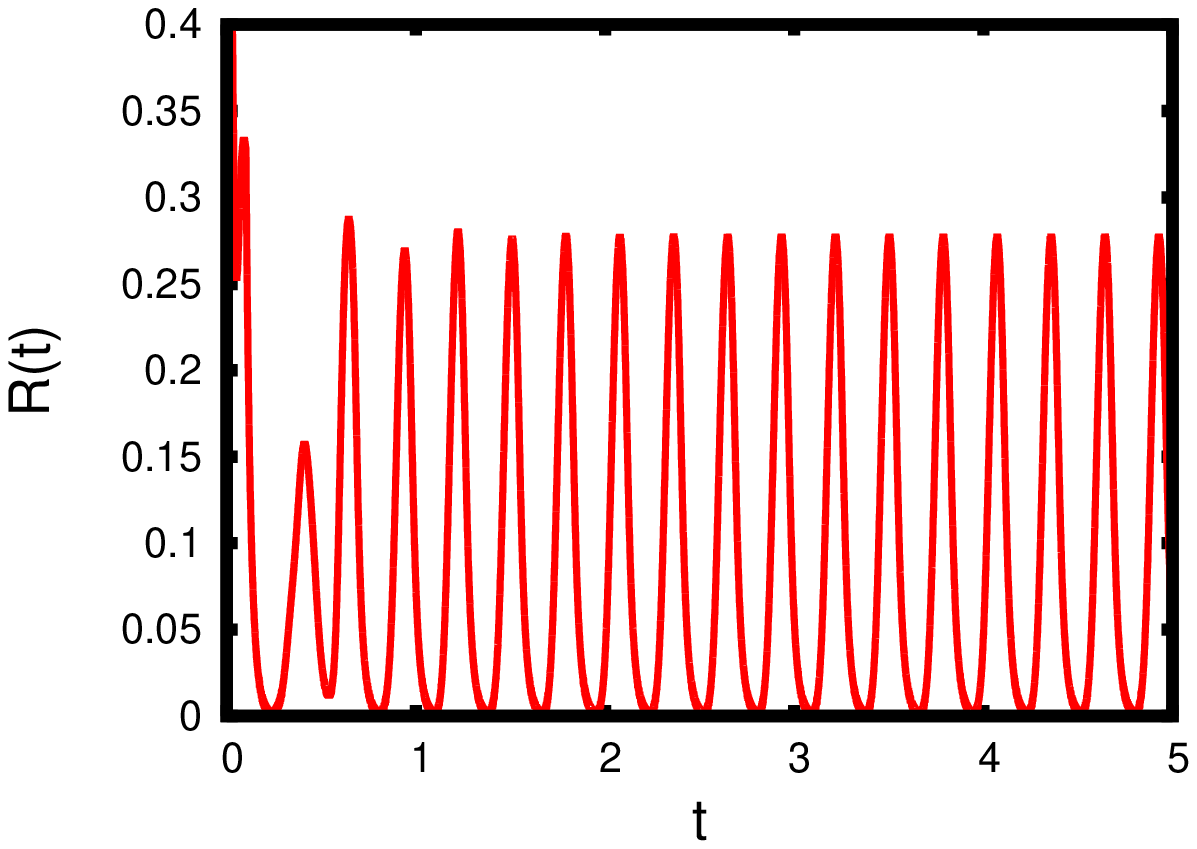}
\end{center}
\end{minipage}
\end{center}
\begin{center}
\begin{minipage}[c]{0.33\linewidth}
\begin{center}
\includegraphics[width=\textwidth]{./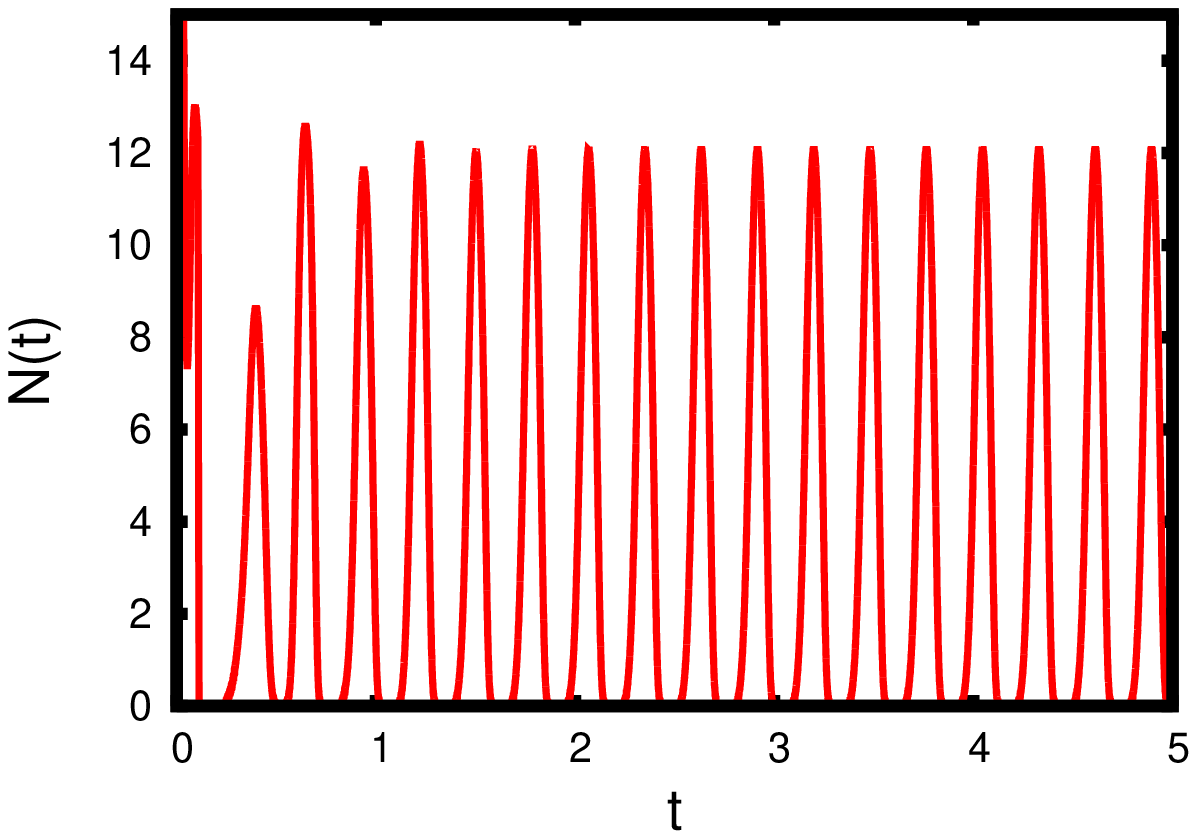}
\end{center}
\end{minipage}
\begin{minipage}[c]{0.33\linewidth}
\begin{center}
\includegraphics[width=\textwidth]{./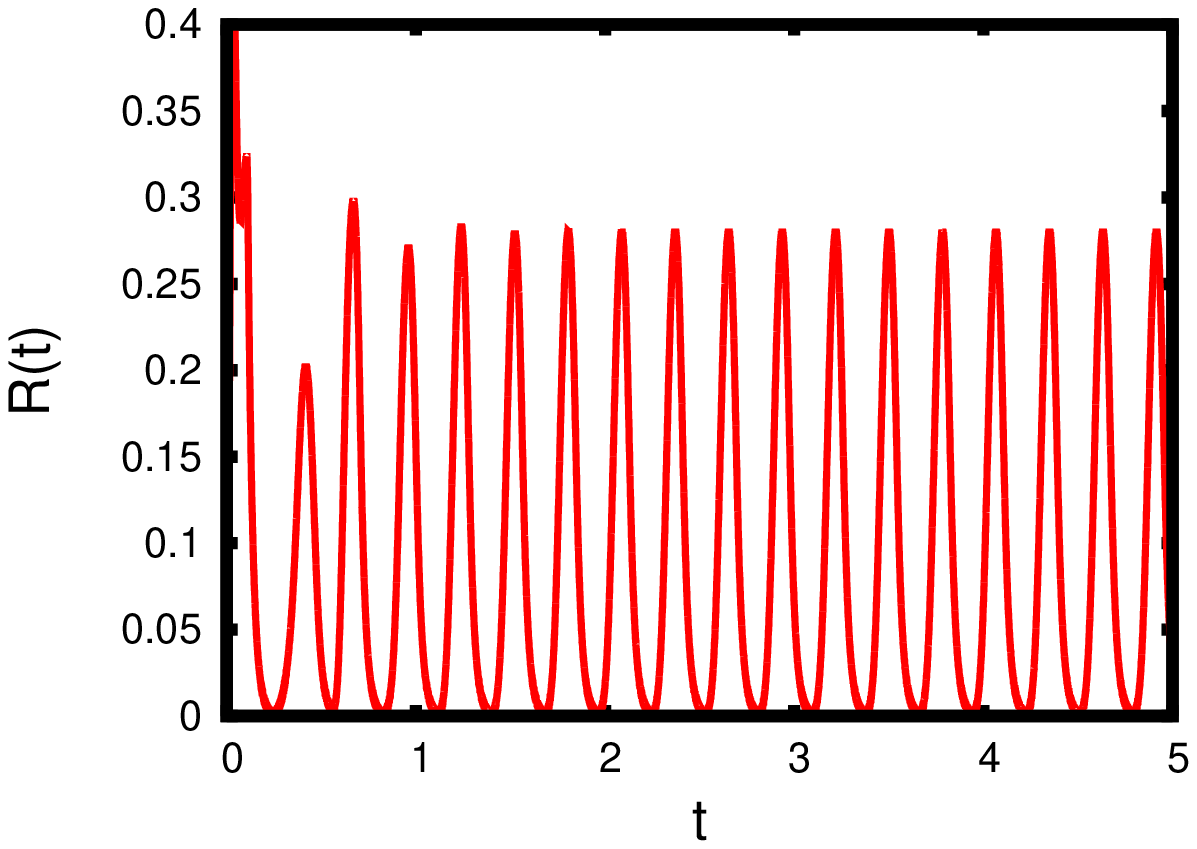}
\end{center}
\end{minipage}
\end{center}
\begin{center}
\begin{minipage}[c]{0.33\linewidth}
\begin{center}
\includegraphics[width=\textwidth]{./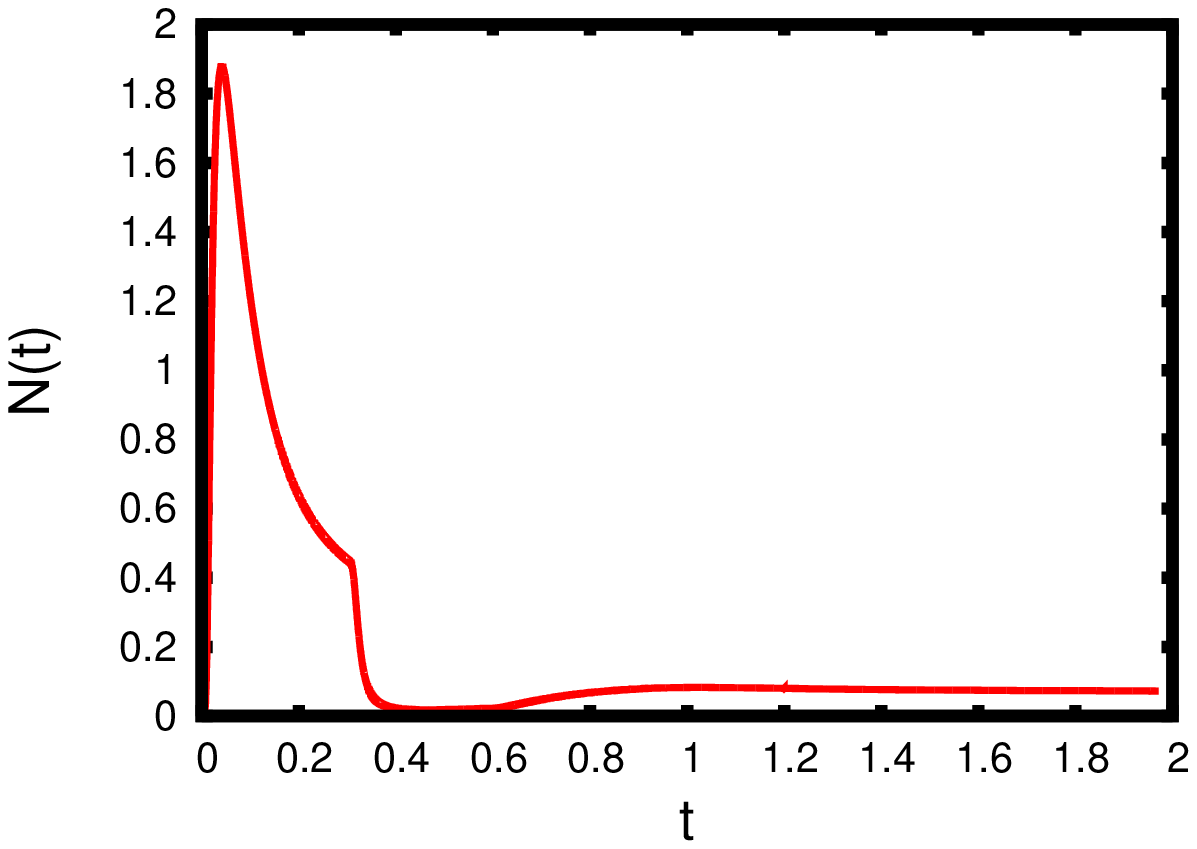}
\end{center}
\end{minipage}
\begin{minipage}[c]{0.33\linewidth}
\begin{center}
\includegraphics[width=\textwidth]{./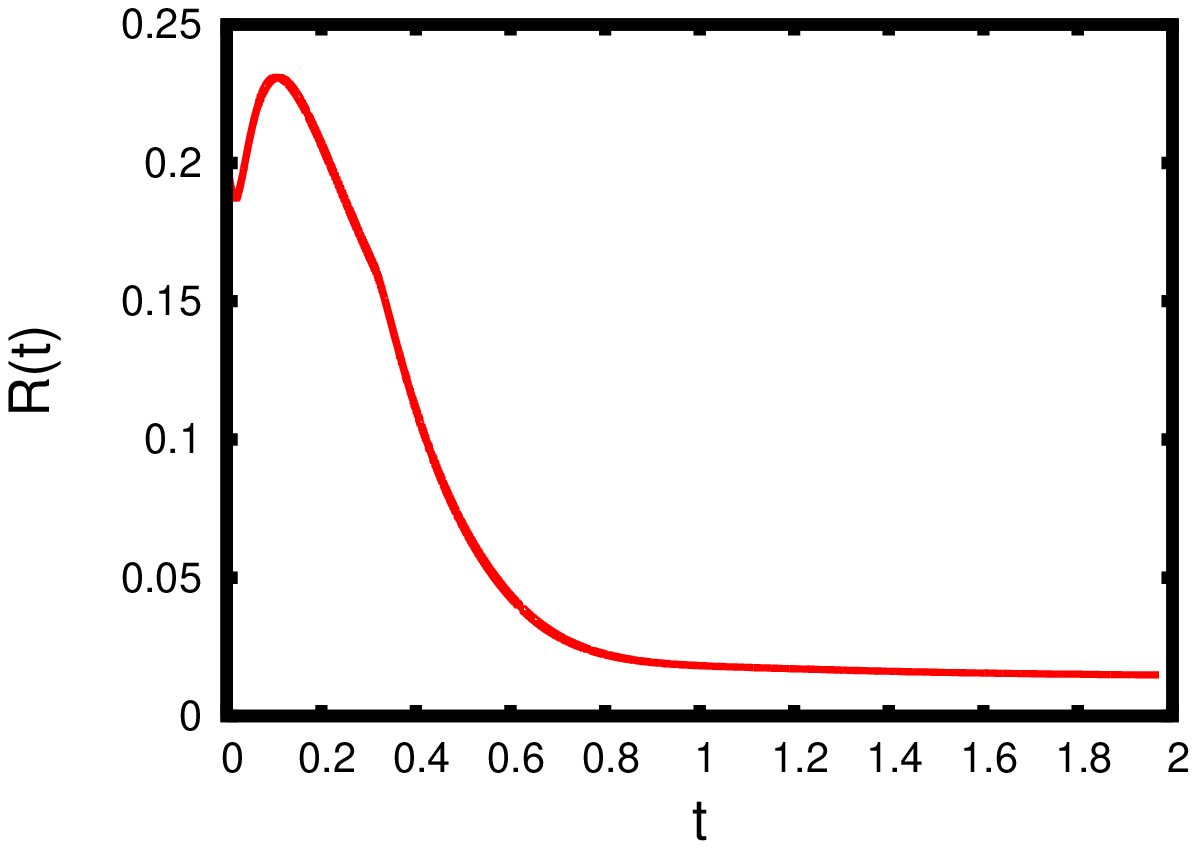}
\end{center}
\end{minipage}
\end{center}
\caption{
{\bf System \eqref{nota} (only one average-inhibitory
population) presents periodic 
solutions, if there is a transmission delay.-}
We consider initial data \eqref{ci_maxwel} with
 $\sigma_0=0.0003$,  the connectivity parameter  $b=-4$,
the transmission delay $D=0.1$,   and with refractory 
states ($M(t)=\frac{R(t)}{\tau}$), where $\tau=0.025$ and $R(0)=0.2$.
\newline
Periodic solutions appear if the initial condition is
concentrated enough  around the
threshold potential, but even if the initial datum is far from
the threshold and the $v_{ext}$ is large.
Top: $v_0=1.83$, $v_{ext}=20$. Middle: $v_0=1.5$, $v_{ext}=20$.
Bottom: $v_0=1.5$, $v_{ext}=0$.}
\label{osci_R_MJ-2}
\end{figure}
\begin{figure}[H]
\begin{center}
\begin{minipage}[c]{0.33\linewidth}
\begin{center}
\includegraphics[width=\textwidth]{./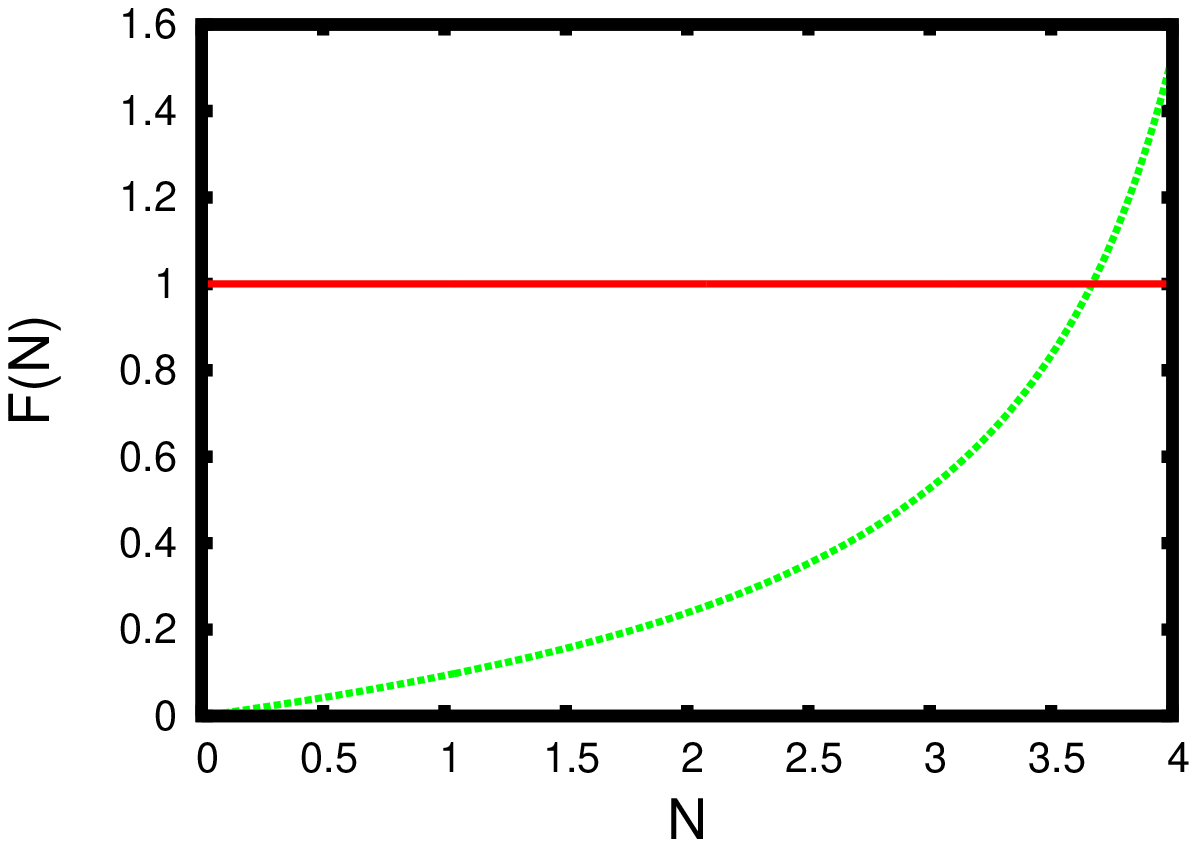}
\end{center}
\end{minipage}
\begin{minipage}[c]{0.33\linewidth}
\begin{center}
\includegraphics[width=\textwidth]{./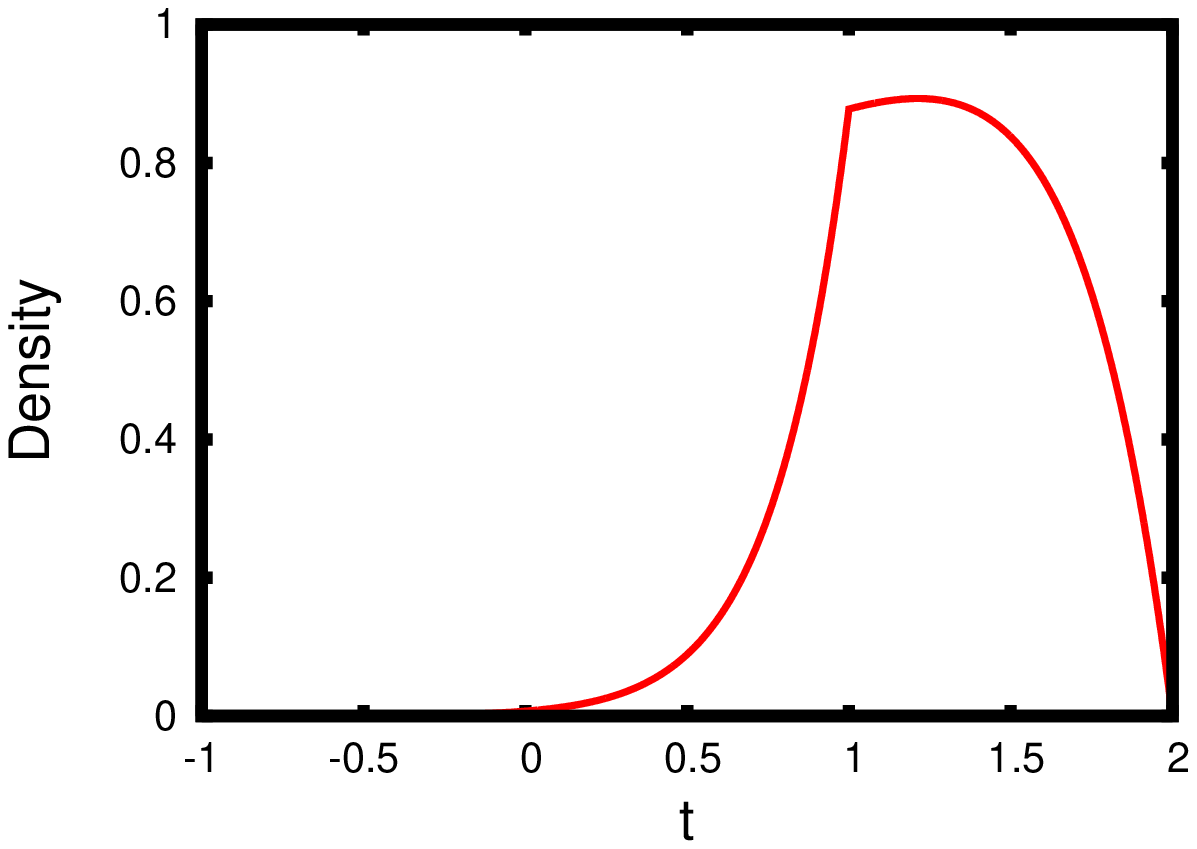}
\end{center}
\end{minipage}
\end{center}
\begin{center}
\begin{minipage}[c]{0.33\linewidth}
\begin{center}
\includegraphics[width=\textwidth]{./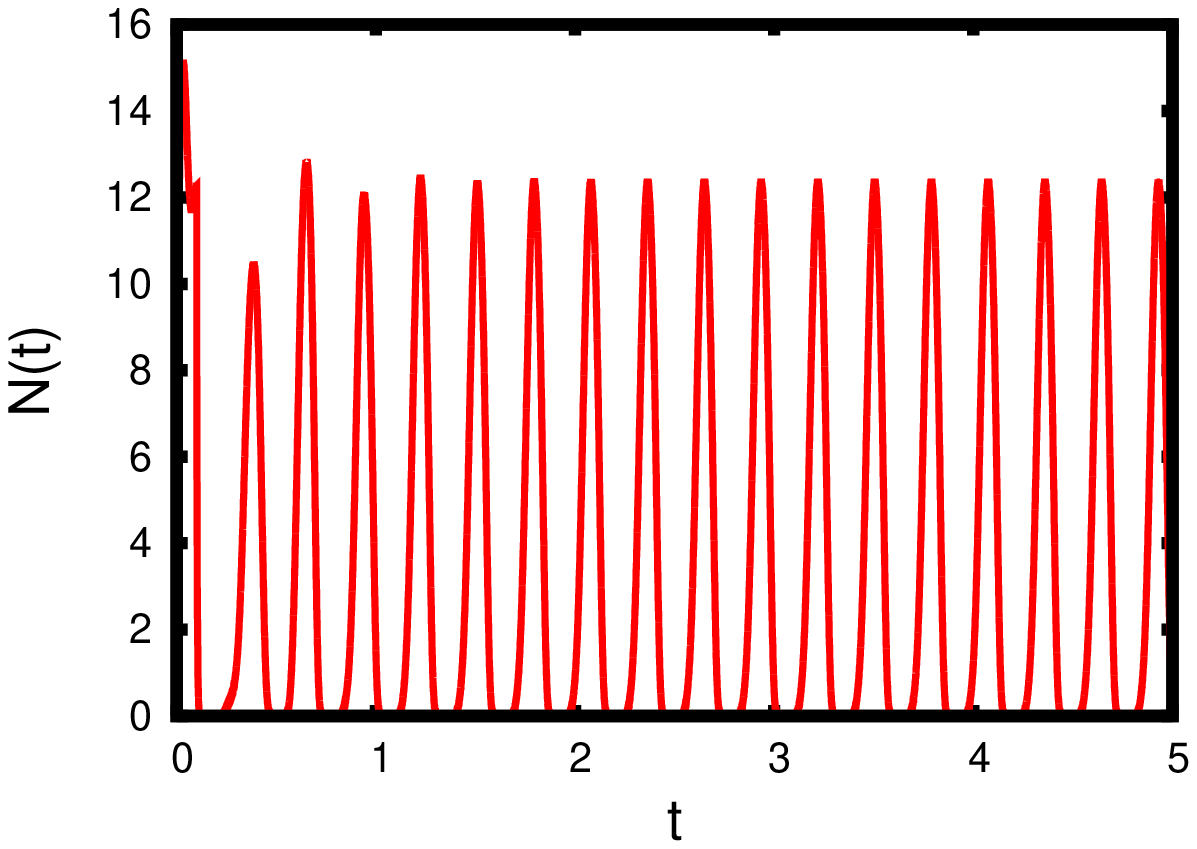}
\end{center}
\end{minipage}
\begin{minipage}[c]{0.33\linewidth}
\begin{center}
\includegraphics[width=\textwidth]{./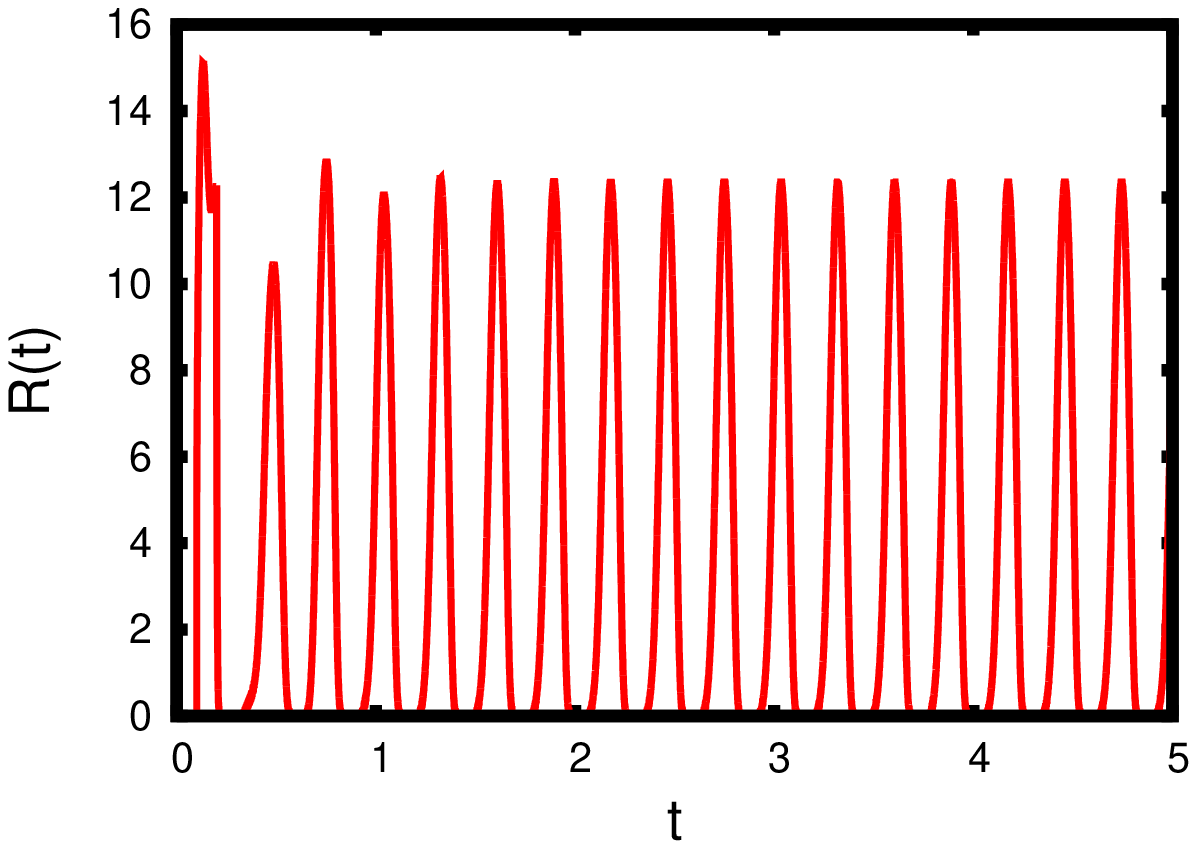}
\end{center}
\end{minipage}
\end{center}
\begin{center}
\begin{minipage}[c]{0.33\linewidth}
\begin{center}
\includegraphics[width=\textwidth]{./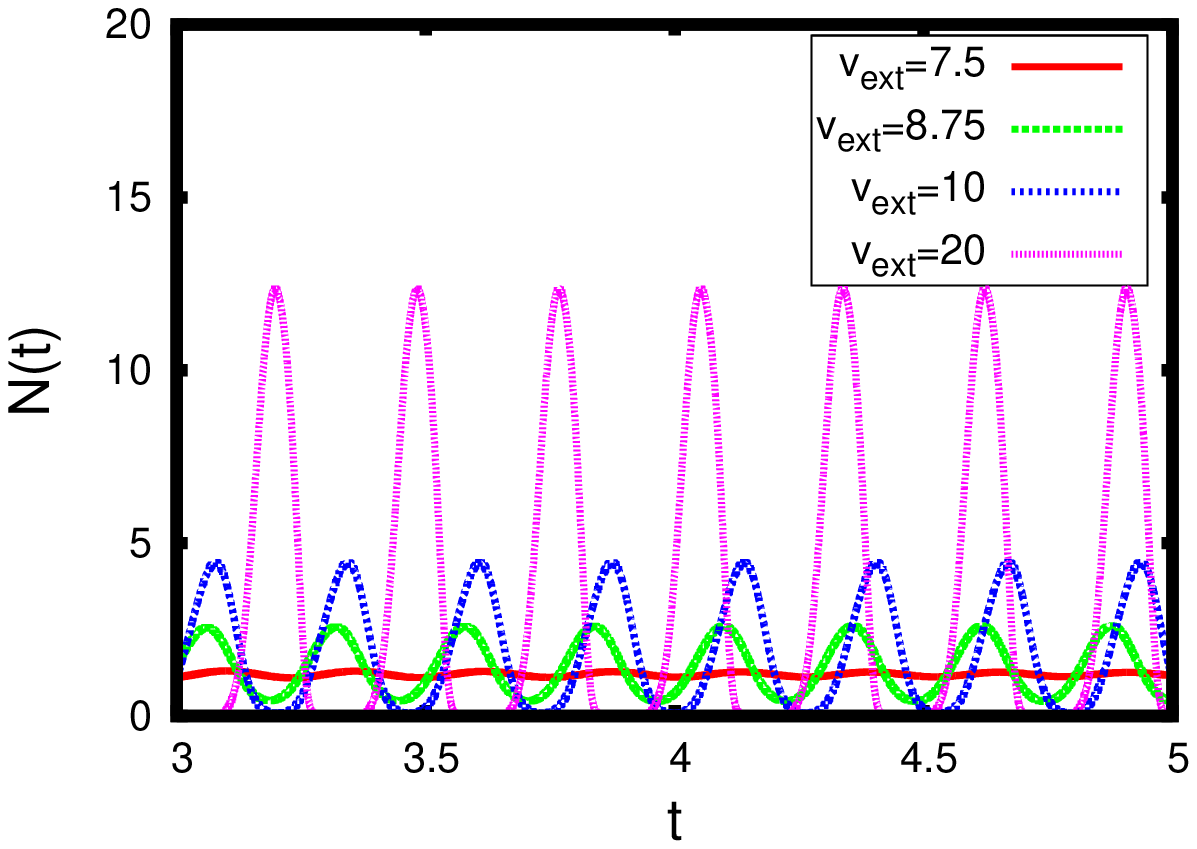}
\end{center}
\end{minipage}
\end{center}
\caption{
{\bf System \eqref{nota} (only one average-inhibitory population) presents periodic 
solutions, if there is a transmission delay.-}
We consider initial data \eqref{soleq} with
 $N=3.669$,  the connectivity parameter  $b=-4$,
the transmission delay  $D=0.1$, $v_{ext}=20$   and with refractory 
states ($M(t)=\frac{R(t)}{\tau}$), where $\tau=0.025$ and $R(0)=0.091725$.
\newline
Periodic solutions also appear if the initial condition (top right) is
very close to the unique equilibrium when $v_{ext}$ is large. Indeed, for this parameter space, solutions always converge 
 to the same periodic solution. Top:
Description of the unique steady state. Left: $F(N)=N(I(N)+\tau)$ 
 crosses with the constant function 1 giving the unique $N_\infty$.
Right: Unique steady state given by the profile \eqref{soleq} with firing rate  $N=3.669$.
Middle: Evolution of the firing rate and the refractory state 
for the solution with initial data given by \eqref{soleq} with firing rate  $N=3.669$.
Bottom: Influence of $v_{ext}$ in the behaviour of the system.}
\label{osci_R_MJ-3}
\end{figure}
\begin{figure}[H]
\begin{center}
\begin{minipage}[c]{0.33\linewidth}
\begin{center}
\includegraphics[width=\textwidth]{./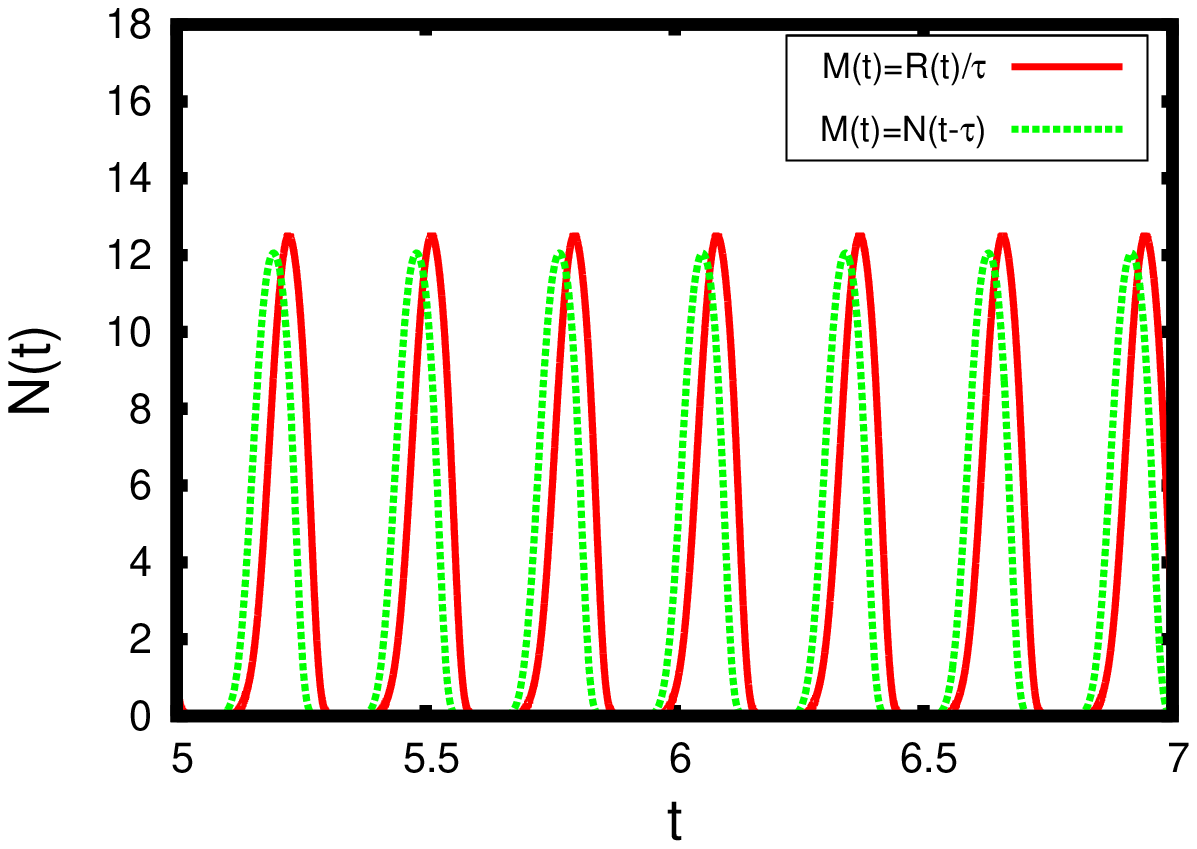}
\end{center}
\end{minipage}
\begin{minipage}[c]{0.33\linewidth}
\begin{center}
\includegraphics[width=\textwidth]{./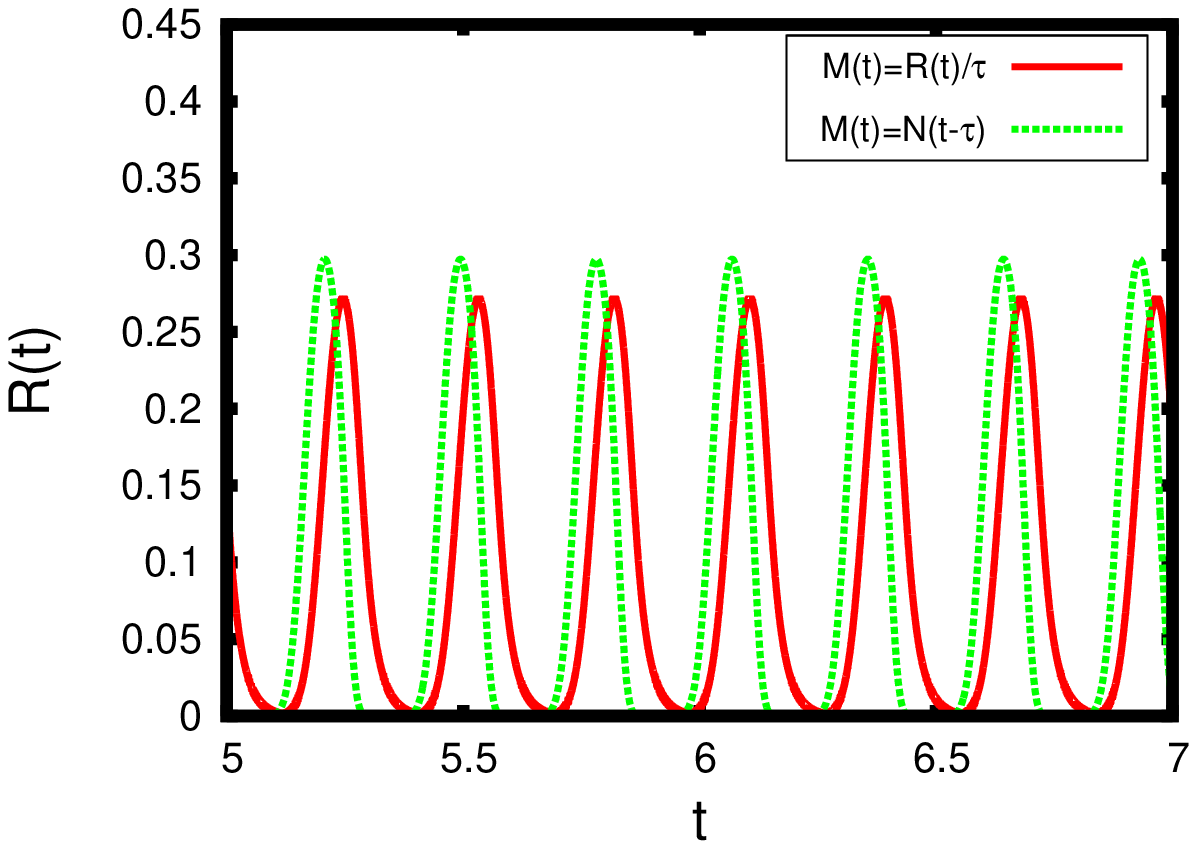}
\end{center}
\end{minipage}
\end{center}
\begin{center}
\begin{minipage}[c]{0.33\linewidth}
\begin{center}
\includegraphics[width=\textwidth]{./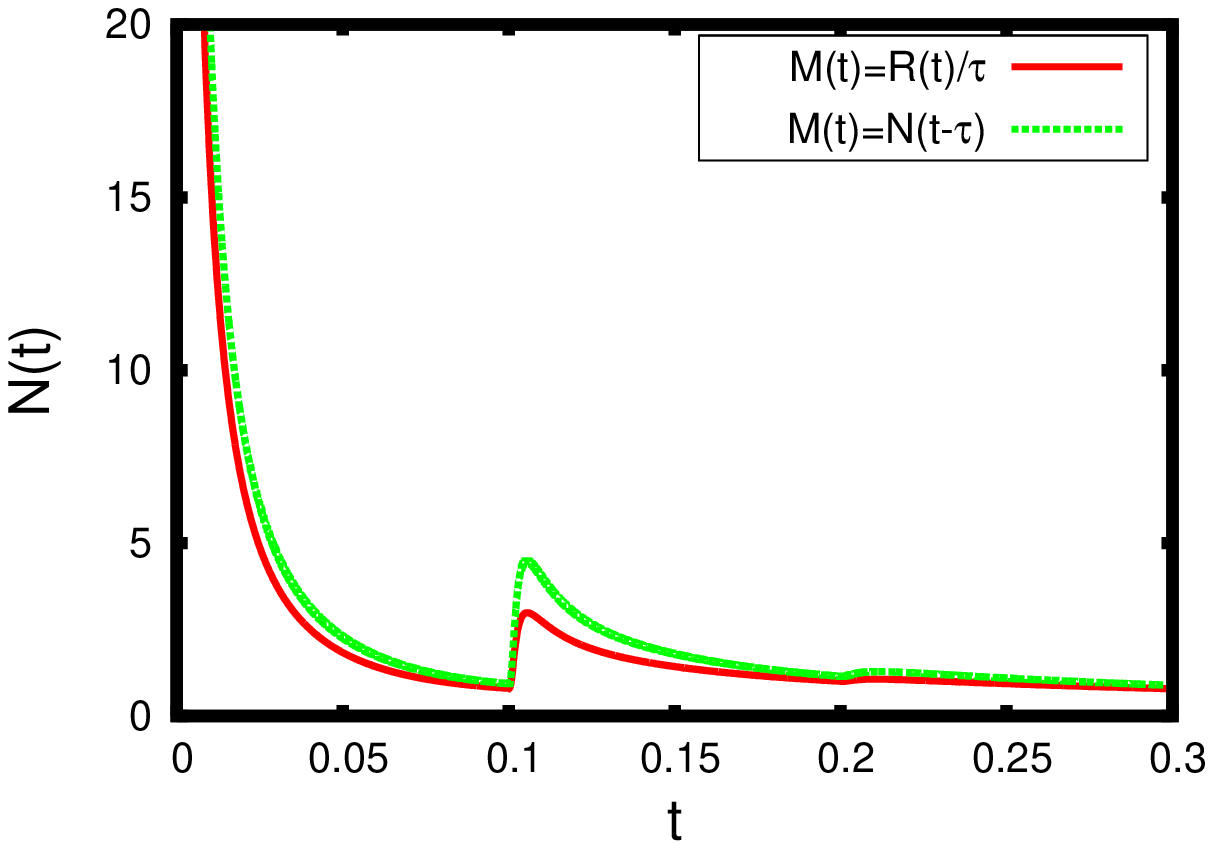}
\end{center}
\end{minipage}
\begin{minipage}[c]{0.33\linewidth}
\begin{center}
\includegraphics[width=\textwidth]{./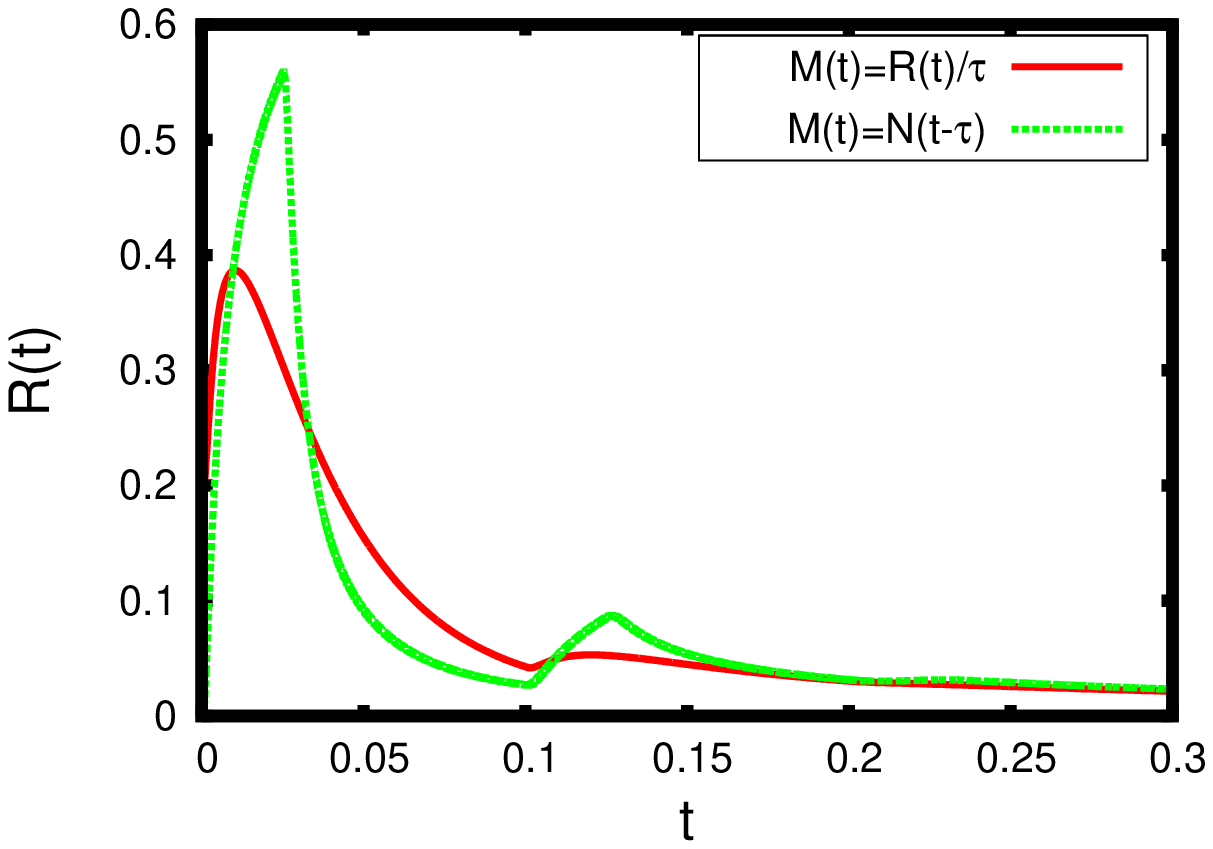}
\end{center}
\end{minipage}
\end{center}
\caption{
{\bf Comparison between $R(t)$ and $N(t)$ for $M(t)=\frac{R(t)}{\tau}$ and $M(t)=N(t-\tau)$.}
Top: initial data \eqref{ci_maxwel} with  $v_0=1.83$ and
 $\sigma_0=0.0003$,  the connectivity parameter  $b=-4$,
the transmission delay  $D=0.1$, $\tau=0.025$, $R(0)=0.2$ and $v_{ext}=20$.
Middle:  parameter space of Fig. \ref{blowup_1pob}, bottom. The qualitative behavior is the same for both models, even the
solutions seem to be hardly the same.
}
\label{blowup_ERD_ci_MJ}
\end{figure}
\begin{figure}[H]
\begin{center}
\begin{minipage}[c]{0.33\linewidth}
\begin{center}
\includegraphics[width=\textwidth]{./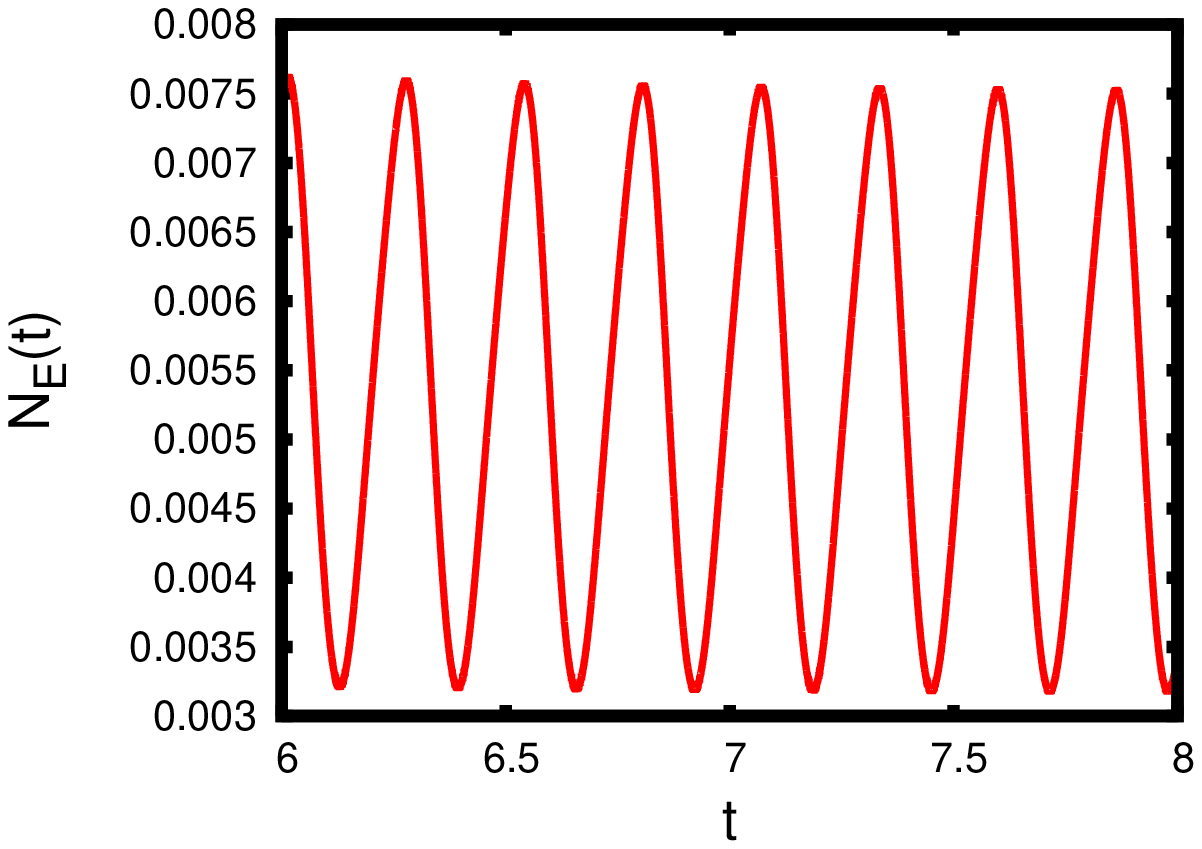}
\end{center}
\end{minipage}
\begin{minipage}[c]{0.33\linewidth}
\begin{center}
\includegraphics[width=\textwidth]{./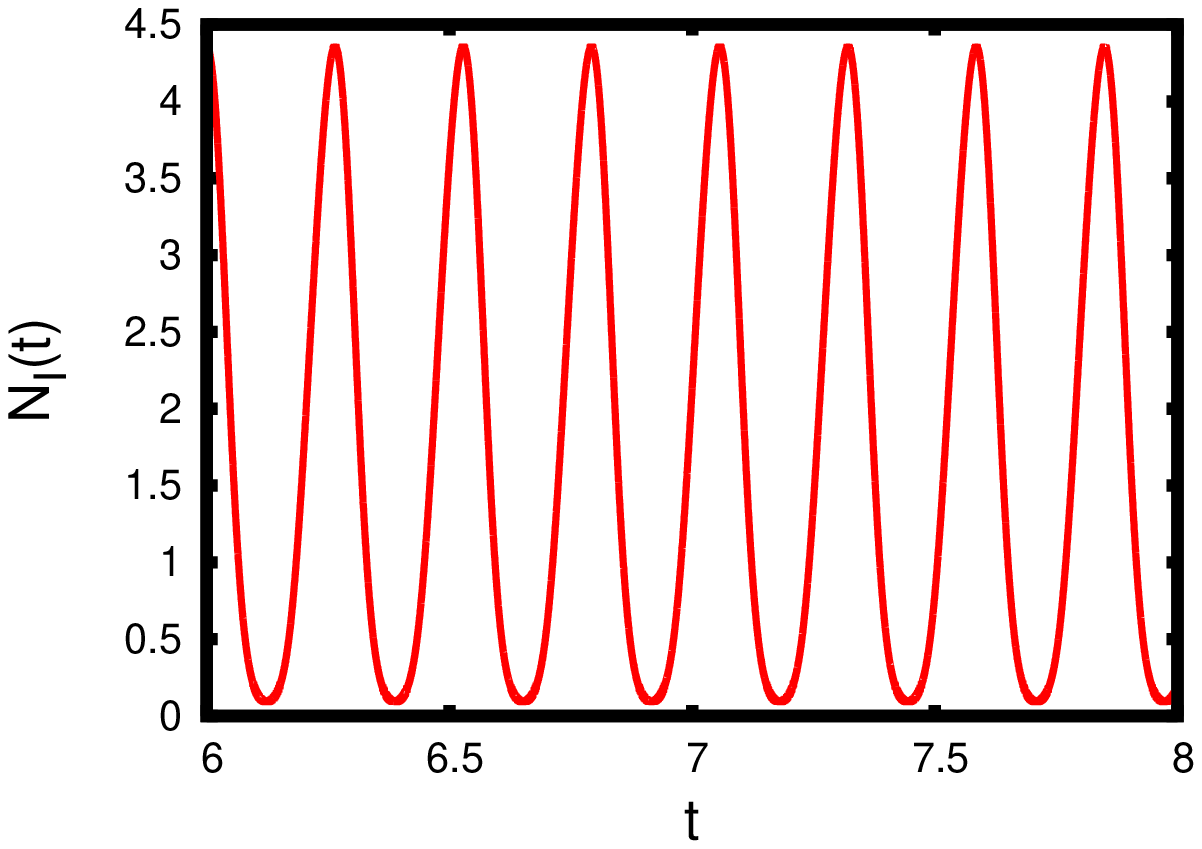}
\end{center}
\end{minipage}
\end{center}
\begin{center}
\begin{minipage}[c]{0.33\linewidth}
\begin{center}
\includegraphics[width=\textwidth]{./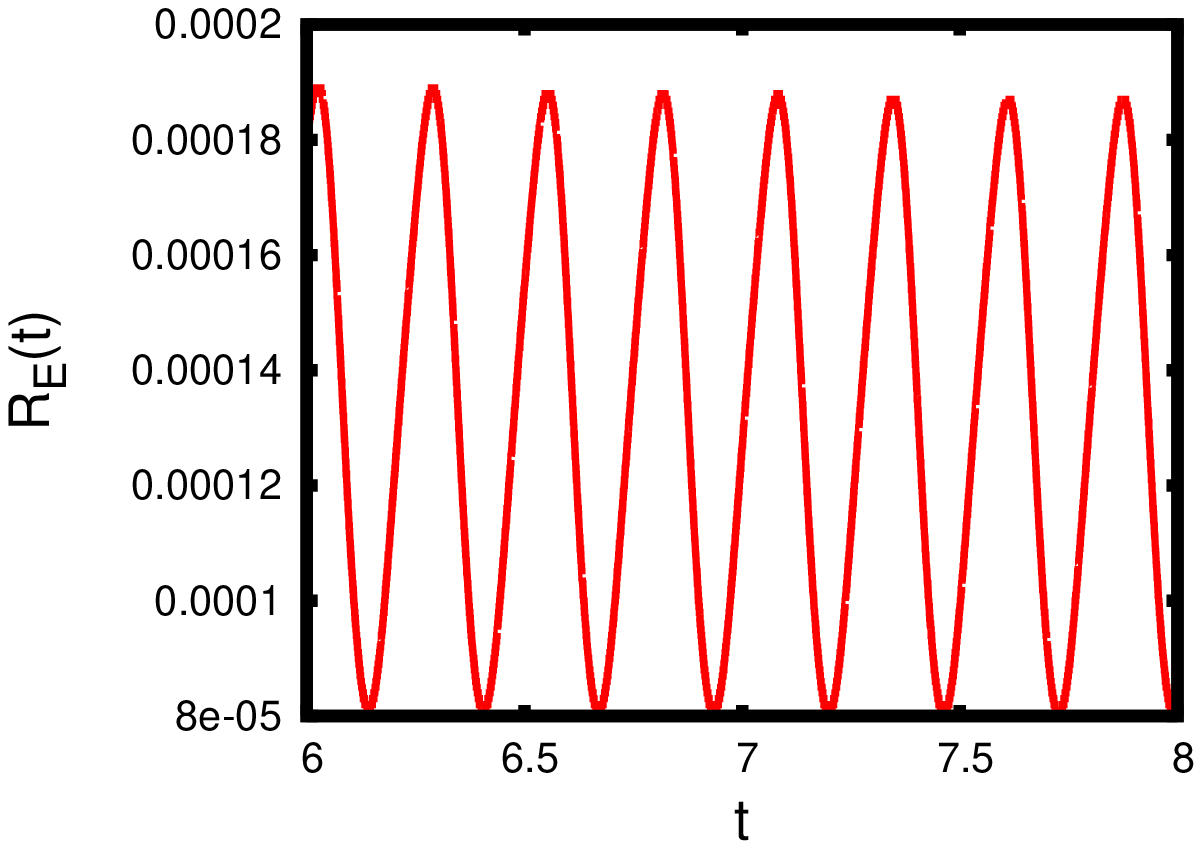}
\end{center}
\end{minipage}
\begin{minipage}[c]{0.33\linewidth}
\begin{center}
\includegraphics[width=\textwidth]{./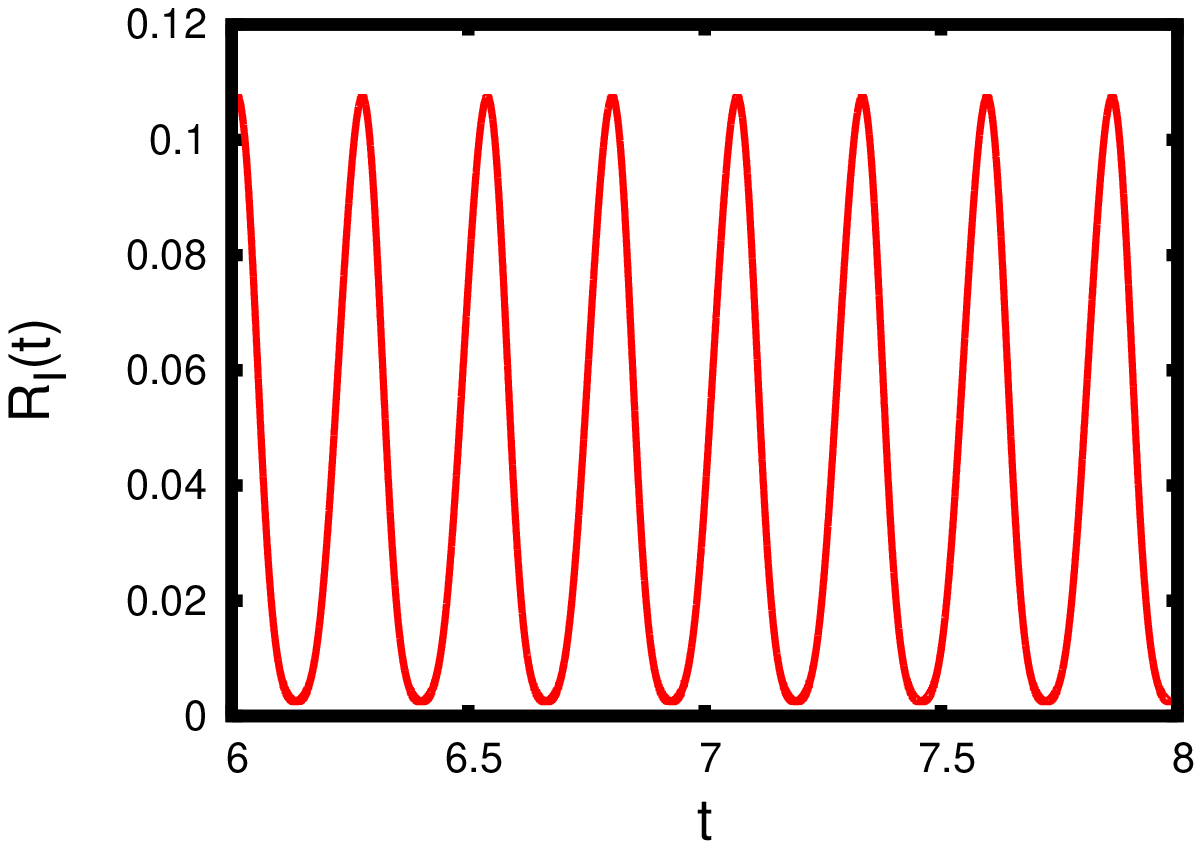}
\end{center}
\end{minipage}
\end{center}
\caption{{\bf System \eqref{modelo} (two populations: excitatory and
inhibitory) presents periodic solutions if there is a delay.-}
We consider initial data \eqref{ci_maxwel} with
$v_0^E=v_0^I=1.25$ and $\sigma_0^E=\sigma_0^I=0.0003$, $v_{ext}=20$
and the connectivity parameters
$b_E^E=0.5$, $b_I^E=0.75$, $b_I^I=4$, $b_E^I=1$
and with refractory states ($M_\alpha(t)=N_\alpha(t-\tau_\alpha)$)
where $\tau_\alpha=0.025$.
Top: Time evolution of the excitatory and inhibitory firing rates.
Bottom: Time evolution of the excitatory and inhibitory refractory states.
}
\label{oscilaciones_EI}
\end{figure}


\section{Conclusions and open problems}

In this work, we have extended the results presented in 
\cite{CCP, CP, CS17} to a  general network with two
populations (excitatory and inhibitory) with transmission delays between the
neurons, and where the neurons remain in a refractory state for a certain time.
From an analytical point of view we have explored the number
of steady states in terms of the model parameters
(Theorem \ref{th: steady states}), the long time behaviour
for small connectivity parameters (Theorem \ref{th: long1}),
 and  blow-up phenomena if there is not a transmission delay 
between excitatory neurons (Theorem \ref{th_blowup}).

Besides analytical results, we have presented a numerical resolutor
for this model \eqref{modelo}, based on high order flux-splitting WENO  schemes and
an explicit third order TVD Runge-Kutta method,
in order to describe the
wide range of phenomena displayed by the network: blow-up, asynchronous/synchronous 
solutions and instability/stability
of the steady states. The solver also allows to observe the time evolution of
not only the firing rates and refractory states, but also of 
the probability distributions of the excitatory and
inhibitory populations.

The resolutor was used to illustrate the  result of Theorem \ref{th_blowup}:
 as long as the transmission delay of the excitatory
to excitatory synapses is zero  ($D_E^E=0$),  blow-up phenomena 
 appear in the full NNLIF model, even
if there are nonzero transmission delays in the rest of the synapses. 

We  remark that the numerical results suggest  that
 blow-up phenomena disappear
when the excitatory to excitatory transmission delay  is nonzero,
and the solutions may
tend to a steady state or to a synchronous state. 
In the case of only one  average-inhibitory population
the behavior of the solutions after preventing a blow-up phenomenon
seems to depend on the strength of the external synapses  $v_{ext}$.
Furthermore, we  have also observed periodic solutions for small values of
the excitatory connectivity parameter combined with
 an initial data far from the threshold potential.
Thus,  synchronous  solutions are not a direct consequence of having 
avoided the blow-up phenomenon.

Our numerical study is completed with the stability analysis of  
 the steady states, when the network presents three of them.
In our simulations, we do not observe bistability phenomena
since the two upper stationary firing rates are unstable, while
the lowest one is stable.

Finally, to our knowledge, the numerical solver presented in this paper is the
  first deterministic solver to describe the behavior of the full
  NNLIF system involving all the characteristic phenomena of real
  networks. Including all relevant phenomena is essential to explore
  some open problems, as for instance, 
the analytical proof of the global existence
of solution when there is a nonzero excitatory to excitatory transmission delay,
the reasons why solutions sometimes tend to a steady state
and sometimes to a synchronous state, 
and 
an analytical study of the stability of the steady states 
when the connectivity parameters are not small.



\

\thanks{\em The authors acknowledge support from projects
MTM2011-27739-C04-02 and MTM2014-52056-P
of Spanish Ministerio de Econom\'\i a y Competitividad and
the European Regional Development Fund (ERDF/FEDER).
The second author was also sponsored by the grant BES-2012-057704.}

\bibliographystyle{siam}
\bibliography{Bib.bib}

\end{document}